\subjclass[2010]{16W50}
\keywords{Group Gradings, Incidence algebras, Associative algebras}
\title{Group gradings on finite dimensional incidence algebras}
\author{Ednei A. Santulo Jr}
\address{Department of Mathematics, State University of Maring\'a, 5790 Colombo Avenue, 87020-900 Maring\'a, PR, Brazil}
\email{easjunior@uem.br}
\author{Jonathan P. Souza}
\thanks{The second author was financed by the Coordena\c c\~ao de Aperfei\c coamento de Pessoal de N\'ivel Superior - Brasil (CAPES) - Finance Code 001}
\address{Department of Mathematics, State University of Maring\'a, 5790 Colombo Avenue, 87020-900 Maring\'a, PR, Brazil}
\email{jonathanprass@hotmail.com}
\author{Felipe Y. Yasumura}
\thanks{The third author was supported by S\~ao Paulo Research Foundation (Fapesp), grant number 2013/22.802-1, and by the Coordena\c c\~ao de Aperfei\c coamento de Pessoal de N\'ivel Superior - Brasil (CAPES) - Finance Code 001}
\address{Department of Mathematics, State University of Maring\'a, 5790 Colombo Avenue, 87020-900 Maring\'a, PR, Brazil}
\email{felipeyukihide@gmail.com}
\newtheorem{Thm}{Theorem}
\newtheorem{Lemma}[Thm]{Lemma}
\newtheorem{Cor}[Thm]{Corollary}
\newtheorem{Prop}[Thm]{Proposition}
\newtheorem*{Question}{Question}
\theoremstyle{definition}
\newtheorem{Def}[Thm]{Definition}
\theoremstyle{remark}
\newtheorem{Remark}[Thm]{Remark}
\newtheorem{Example}{Example}
\newcounter{examples}
\begin{document}
\begin{abstract}
In this work, we classify the group gradings on finite-dimensional incidence algebras over a field, where the field has characteristic zero, or the characteristic is greater than the dimension of the algebra, or the grading group is abelian.

Moreover, we investigate the structure of $G$-graded $(D_1,D_2)$-bimodules, where $G$ is an abelian group, and $D_1$ and $D_2$ are the group algebra of finite subgroups of $G$. As a consequence, we can provide a more profound structure result concerning the group gradings on the incidence algebras, and we can classify their isomorphism classes of group gradings.
\end{abstract}
\maketitle

\section{Introduction\label{S_Introduction}}

When one deals with an algebraic structure, it is usually useful to consider an additional structure on it, when possible. Considering group gradings in algebras has proved to be useful to solve relevant mathematical problems more than once. That is the case of the classification of finite-dimensional semisimple Lie algebras, which are graded by the root system (see e.g. \cite{Jac1979}, or \cite{SanMartin}). That is also the case of the positive solution for the Specht problem given by Kemer in \cite{Kemer}. Furthermore, group gradings naturally appear in several branches of Mathematics and Physics. 

We briefly recall the definition of graded algebras. Let $A$ be any algebra (not necessarily associative, not necessarily with unit), and let $G$ be any group. We say that $A$ is $G$-graded if there exists a vector space decomposition $A=\bigoplus_{g\in G}A_g$, where $A_g$ are possibly zero subspaces, such that $A_gA_h\subseteq A_{gh}$, for all $g,h\in G$. The elements of $\cup_{g\in G}A_g$ are called homogeneous, and a non-zero $x\in A_g$ is said to have degree $g$, denoted by $\deg_G x=g$, or simply $\deg x=g$ if there is no ambiguity. An extensive theory concerning graded algebras can be found in the monograph \cite{EldKoc}.

An interesting question is, given an algebra, determine all the possible group gradings on it up to graded isomorphisms. In this direction, the works \cite{BaSeZa2001,BaZa2002} gave the answer for the matrix algebras (they investigate a more general situation), and then, in \cite{bashza}, the authors show how to obtain group gradings on some simple Lie and Jordan algebras from the knowledge of the group gradings on matrix algebras. These works started an extensive research in the subject. The monograph \cite{EldKoc} is a complete state-of-art of the theory.

In this paper, we are interested in the non-simple associative algebras called incidence algebras. These are a generalization for the upper triangular matrices $UT_n$. In short, an incidence algebra is a subalgebra of $UT_n$, generated by matrix units and containing all diagonal matrices (see a precise definition below). Thus, the classification of group gradings on upper triangular matrices is a particular result of the classification of group gradings on the incidence algebras. The classification of group gradings on the upper triangular matrices was obtained in two works \cite{VaZa2007,VinKoVa2004}. To state the classification, we give some definitions. We call a grading on $UT_n$ \emph{good} if all matrix units $e_{ij}$ are homogeneous in the grading; and we call a $G$-grading on $UT_n$ \emph{elementary} if there exists a sequence $(g_1,g_2,\ldots,g_n)\in G^n$ such that every $e_{ij}$ is homogeneous of degree $g_ig_j^{-1}$. Clearly every elementary grading is a good grading, and, in the context of upper triangular matrices, the converse holds. Hence, good and elementary gradings are equivalent notions for the algebra of upper triangular matrices. In \cite{VaZa2007}, it is proved that every group grading on $UT_n$ is elementary, up to an isomorphism. In \cite{VinKoVa2004}, the isomorphism classes of elementary gradings on $UT_n$ are classified, thus obtaining a complete classification of group gradings on this algebra.

Note that the notions of elementary gradings and good gradings can be defined for subalgebras of matrix algebras generated by matrix units. Hence, in particular, one can define these notions for the incidence algebras. It is worth mentioning that these notions were generalized in \cite{BFS2019} for a larger class of algebras. It is natural to conjecture that these two notions are equivalent, and that every group grading on an incidence algebra is elementary up to an isomorphism. However, these two notions are not always equivalent: not every group grading on an incidence algebra is elementary or good (see below). Thus, incidence algebra turns out to be a more intricate combinatorial object in the point of view of gradings by a group.

In this paper, we classify group gradings on the incidence algebra $I(X)$ over a field $\mathbb{F}$, where $X$ is finite and one of the following conditions hold: the base field has characteristic zero, $\mathrm{char}\,\mathbb{F}>\dim I(X)$, or $G$ is abelian.

Some works were dedicated to study the group gradings on the incidence algebras \cite{MSp2010,Jon2006,Pr2013}, but none of them provide a complete understading of its group gradings. Some different phenomenon happens in this algebra. The first surprising fact is that not every good grading on $I(X)$ is necessarily elementary, even if $X$ is finite (see, for instance, \cite[Example 12]{Jon2006}). Moreover, there exist group gradings on $I(X)$ that are not good (see also \cite[Example 1]{MSp2010}):
\begin{Example}\label{ex1}
Let $X=\{1,2,3\}$ be partially ordered by $1\le2$ and $1\le3$. Then we obtain the algebra
$$
I(X)=\left\{\left(\begin{array}{ccc}\ast&\ast&\ast\\&\ast&0\\&&\ast\end{array}\right)\right\}.
$$
%Let $G=\mathbb{Z}_2\times\mathbb{Z}_2$ (with multiplicative notation), and let $g,h\in G$ be a pair of generators of the group. Define
Let $G=\mathbb{Z}_2\times\mathbb{Z}_2$, and define
%\begin{align*}
%A_1=\mathrm{Span}\{e_{11},e_{22}+e_{33}\},\\
%A_g=\mathrm{Span}\{e_{22}-e_{33}\},\\
%A_h=\mathrm{Span}\{e_{12}-e_{13}\},\\
%A_{gh}=\mathrm{Span}\{e_{12}+e_{13}\}.
%\end{align*}
\begin{align*}
A_{(0,0)}=\mathrm{Span}\{e_{11},e_{22}+e_{33}\},\\
A_{(0,1)}=\mathrm{Span}\{e_{22}-e_{33}\},\\
A_{(1,0)}=\mathrm{Span}\{e_{12}-e_{13}\},\\
A_{(1,1)}=\mathrm{Span}\{e_{12}+e_{13}\}.
\end{align*}
This is a well-defined grading on $I(X)$. Moreover, it is not isomorphic to a good grading, since there are not three orthogonal homogeneous idempotents.
\end{Example}
Thus, the study of group gradings on incidence algebras is more complicated than the case of upper triangular matrices. Recall that a grading by a group is equivalent to the action of the group of characters on the algebra, given that the group is abelian and the base field is algebraically closed of characteristic zero (see, for instance, \cite[Section 1.4]{EldKoc}). The automorphism group of the incidence algebra is known \cite[Theorem 7.3.6]{SpieDon}, and it is more intricate than the automorphism group of the upper triangular matrices (see, for instance, \cite{J1995}). Hence, the automorphism group of $I(X)$ suggests the existence of new gradings, other than the gradings on upper triangular matrices.

Another surprising fact is the existence of group gradings on incidence algebras without any homogeneous multiplicative basis (see example in Section \ref{S_Examples}). We recall that a basis $B$ is multiplicative if for all $u,v\in B$, there exists a scalar $\lambda$ such that $\lambda w=uv$, for some $w\in B$. In contrast, it is clear that any good grading admits a multiplicative homogeneous basis. Thus, every group grading on $UT_n$ admits a multiplicative homogeneous basis.

In the next section, we are going to prove our main result:
\begin{Thm}\label{class}
Let $\mathbb{F}$ be a field, $X$ a finite poset, and let $I(X)$ be endowed with a $G$-grading. Assume at least one of the following conditions: $\mathrm{char}\,\mathbb{F}=0$, $\mathrm{char}\,\mathbb{F}>\dim I(X)$, or $G$ is abelian. Then, up to a graded isomorphism, there exist finite abelian subgroups $H_1,\ldots,H_t\subseteq G$, such that: for each $i=1,\ldots,t$, $\mathrm{char}\,\mathbb{F}$ does not divide $|H_i|$, and $\mathbb{F}$ contains a primitive $\mathrm{exp}\,H_i$-root of $1$; and
$$
I(X)\cong\left(\begin{array}{cccc}\mathbb{F}H_1&M_{1,2}&\ldots&M_{1,t}\\&\mathbb{F}H_2&\ddots&\vdots\\&&\ddots&M_{t-1,t}\\&&&\mathbb{F}H_t\end{array}\right),
$$
where each $M_{i,j}$ is a graded $(\mathbb{F}H_i,\mathbb{F}H_j)$-bimodule.% Moreover, each $M_{i,j}$ is generated, as a $(\mathbb{F}H_i,\mathbb{F}H_j)$-bimodule, by $\ell_{ij}$ free homogeneous generators, where $0\le\ell_{ij}\le|H_i\cap H_j|$.
\end{Thm}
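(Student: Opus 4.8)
The plan is to peel off the semisimple structure modulo the Jacobson radical, classify the grading on the resulting commutative semisimple quotient, and then lift everything back to a block--triangular form compatible with the poset. First I would observe that the Jacobson radical $J=J(I(X))$, which is spanned by the matrix units $e_{xy}$ with $x<y$, is the unique maximal nilpotent ideal and is therefore preserved by the grading; thus $J$ is a graded ideal and the quotient $\overline{A}=I(X)/J\cong\mathbb{F}^{|X|}$ inherits a $G$-grading. The primitive idempotents $\overline{e}_{xx}$ are canonically indexed by $X$, so any information about the grading on $\overline{A}$ will translate into combinatorial data on the poset.

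Next I would classify the induced $G$-grading on the commutative semisimple algebra $\overline{A}$. Decomposing $\overline{A}$ as a graded direct product of graded-simple components $\overline{A}=\overline{A}_1\times\cdots\times\overline{A}_t$, each factor is a finite-dimensional commutative semisimple graded-simple algebra, hence a graded field: every nonzero homogeneous element is invertible. The support of such a factor is a finite subgroup $H_i\le G$, and the factor is a (possibly twisted) group algebra of $H_i$; commutativity forces $H_i$ to be abelian, while the requirement that $\overline{A}_i\cong\mathbb{F}^{|H_i|}$ be split semisimple forces $\mathrm{char}\,\mathbb{F}\nmid|H_i|$ and the presence of a primitive $\mathrm{exp}\,H_i$-root of $1$ in $\mathbb{F}$, and under these conditions the factor is isomorphic as a graded algebra to $\mathbb{F}H_i$. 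This produces exactly the subgroups $H_1,\ldots,H_t$ and the diagonal blocks of the desired form.

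It then remains to lift this decomposition to $I(X)$ and to arrange the factors in triangular position. The graded-simple components partition the primitive idempotents, and hence partition $X$ into blocks $B_1,\ldots,B_t$ with $|B_i|=|H_i|$. I claim each $B_i$ is an antichain and that the relation $B_i\preceq B_j$, meaning $x\le y$ for some $x\in B_i$ and $y\in B_j$, is a partial order. When $G$ is abelian and $\mathbb{F}$ contains the needed roots of unity, the grading corresponds to an action of the character group $\widehat{G}$ by graded automorphisms; by the description of $\mathrm{Aut}\,I(X)$ in \cite[Theorem 7.3.6]{SpieDon}, inner automorphisms by $1+J$ act trivially on $\overline{A}$, so $\widehat{G}$ acts on $\{\overline{e}_{xx}\}$ through poset automorphisms of $X$. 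The blocks $B_i$ are the orbits, and since a finite-order poset automorphism cannot relate two comparable elements, each orbit is an antichain and $\preceq$ is antisymmetric. In the remaining cases ($\mathrm{char}\,\mathbb{F}=0$ or $\mathrm{char}\,\mathbb{F}>\dim I(X)$) I would instead invoke the graded Wedderburn--Malcev theorem (see \cite{EldKoc}) to lift a graded semisimple subalgebra $S\cong\overline{A}$ with $I(X)=S\oplus J$, lift the central idempotents of $S$ to orthogonal homogeneous idempotents $f_1,\ldots,f_t$ of degree $e$, and run the same combinatorial argument on the blocks. Choosing a linear extension of $\preceq$ to reorder $X$ then puts $I(X)$ in block upper-triangular form; the diagonal blocks carry no radical (the blocks being antichains gives $f_iJf_i=0$), so they are exactly the $\mathbb{F}H_i$, while the off-diagonal blocks $M_{i,j}=f_iI(X)f_j$ are graded $(\mathbb{F}H_i,\mathbb{F}H_j)$-bimodules.

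The main obstacle, I expect, is the lifting step together with the verification that the diagonal blocks are genuinely semisimple. A priori a lifted idempotent $f_i$ need not equal $\sum_{x\in B_i}e_{xx}$, so controlling $f_iJf_i$ requires knowing that $B_i$ is an antichain before one can conclude that the diagonal block is $\mathbb{F}H_i$ rather than a graded algebra with nonzero radical. The genuinely delicate point is the abelian case in bad characteristic, where no averaging is available and graded Wedderburn--Malcev cannot be applied directly; there the argument must run through the explicit structure of $\mathrm{Aut}\,I(X)$ and the grading--action correspondence, which is why the automorphism-theoretic input is essential rather than cosmetic.
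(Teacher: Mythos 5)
Your strategy (pass to $I(X)/J\cong\mathbb{F}^{|X|}$, classify the grading there, then lift) is genuinely different from the paper's, which never leaves $I(X)$: it takes homogeneous idempotents that are \emph{minimal} for the preorder $e\preceq e'\Leftrightarrow eI(X)\subseteq e'I(X)$, shows each corner $e_iI(X)e_i$ is a commutative graded division algebra (Lemma \ref{mainlemma}, Corollary \ref{gradedalgebra}), and deduces from this that the radical of the corner vanishes, hence that each block $\mathscr{D}(e_i)$ is an antichain (Corollary \ref{noncomparable}). Your outline has a concrete gap exactly where that machinery does its work. In the case $\mathrm{char}\,\mathbb{F}=0$ or $\mathrm{char}\,\mathbb{F}>\dim I(X)$ with $G$ arbitrary, after lifting idempotents $f_i$ by graded Wedderburn--Malcev you propose to ``run the same combinatorial argument on the blocks''; but that argument was the orbit argument for a group of poset automorphisms, which only exists in the abelian setting via the $\widehat G$-action. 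With no group acting you have no proof that the blocks are antichains, nor of the uniformity needed for antisymmetry of your relation $\preceq$ (if some $x\in B_i$ lies below some $y\in B_j$, why does \emph{every} element of $B_i$ lie below an element of $B_j$?). The paper needs a separate counting argument for exactly this (Lemma \ref{link}, proved via freeness of $M_{ij}$ over the graded division algebra $D_i$), and nothing in your outline replaces it. Note also that the antichain property cannot be read off from the quotient: on $UT_2$ the $\mathbb{Z}_2$-grading of $\mathbb{F}\times\mathbb{F}$ with homogeneous basis $\overline{e_{11}+e_{22}},\ \overline{e_{11}-e_{22}}$ is graded-simple with support the whole chain $\{1,2\}$, and one must use the grading on $J$ to rule out its lifting.

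The second gap is the abelian case in bad characteristic, which you correctly flag as delicate but then propose to handle by the grading--action correspondence --- precisely the tool that fails there: characters valued in $\mathbb{F}^\times$ do not see the $p$-torsion of $G$ when $\mathrm{char}\,\mathbb{F}=p$, and the existence of enough roots of unity is part of the \emph{conclusion} of the theorem, not a hypothesis you may assume. The paper sidesteps all of this: for abelian $G$ it only uses the duality-free observation that $J=[I(X),I(X)]$ is a homogeneous operation, so $J$ is graded (Lemma \ref{radical_graded}), after which the single idempotent-based argument runs uniformly in all three cases over an arbitrary field. To repair your route you would essentially have to import the paper's minimal-idempotent analysis (Lemmas \ref{idem_elem}--\ref{mainlemma} and Lemma \ref{link}) in place of both the ``same combinatorial argument'' and the $\widehat G$-action.
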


The set $\mathcal{E}=\{1,2,\ldots,t\}$ has a natural structure of poset, and it plays an essential role in establishing the isomorphism classes of group gradings on the incidence algebras. The partial order of $\mathcal{E}$ is given by $i\trianglelefteq j$ if $M_{i,j}\ne0$ (see Remark \ref{remarkassociatedposet}).

Further, if we assume the grading group abelian, then we can prove a stronger statement concerning the bimodules $M_{ij}$:
\begin{Thm}\label{Thm3}
Assume that $G$ is abelian, and, using the notation of Theorem \ref{class}, fix $i<j$ and let $\mathcal{H}_{ij}=H_i\cap H_j$. Then, there exist pairwise distinct characters $\chi^{(ij)}_1,\ldots,\chi^{(ij)}_{s_{ij}}\in\hat{\mathcal{H}}_{ij}$, and $m_1,\ldots,m_{s_{ij}}\in M_{ij}$ homogeneous such that
$$
M_{ij}=\mathbb{F}H_im_1\mathbb{F}H_j\oplus\cdots\oplus\mathbb{F}H_im_{s_{ij}}\mathbb{F}H_j,
$$
where $hm_\ell=\chi^{(ij)}_\ell(h)m_\ell h$, for all $h\in\mathcal{H}_{ij}$, for $\ell=1,2,\ldots,s_{ij}$. Moreover, as a $G$-graded vector spaces, $\mathbb{F}H_im_\ell\mathbb{F}H_j\cong(\mathbb{F}(H_iH_j))^{[\deg m_\ell]}$, for each $\ell=1,2,\ldots,s_{ij}$. Also, $s_{ij}\le|\mathcal{H}_{ij}|$.
\end{Thm}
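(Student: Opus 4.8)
The plan is to diagonalize the conjugation action of $\mathcal{H}_{ij}$ on $M_{ij}$ and to control each eigenspace with a multiplicity-one property inherited from the incidence algebra. Write $A$ for the block algebra of Theorem \ref{class}, so that $A\cong I(X)$. By the hypotheses on $H_i$ and $H_j$ recorded in Theorem \ref{class}, $\mathbb{F}H_i$ and $\mathbb{F}H_j$ are split commutative semisimple, so I would first decompose their units $1=\sum_{\phi\in\hat{H}_i}\epsilon_\phi$ and $1=\sum_{\psi\in\hat{H}_j}\epsilon'_\psi$ into primitive idempotents, with $\epsilon_\phi=|H_i|^{-1}\sum_{h\in H_i}\phi(h)^{-1}h$. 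Because $A$ is block upper triangular, $\epsilon_\phi A\epsilon_\phi=\epsilon_\phi(\mathbb{F}H_i)\epsilon_\phi=\mathbb{F}\epsilon_\phi$, so each $\epsilon_\phi$ is a primitive idempotent of all of $A$, and similarly for $\epsilon'_\psi$. This yields $M_{ij}=\bigoplus_{\phi,\psi}V_{\phi\psi}$ with $V_{\phi\psi}=\epsilon_\phi M_{ij}\epsilon'_\psi$.

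The key input is that $\dim V_{\phi\psi}\le1$. This is the \emph{thinness} of incidence algebras: for any two primitive idempotents $e,f$ of $I(X)$ one has $\dim eI(X)f\le1$ (for matrix units $e=e_{xx}$, $f=e_{yy}$ this dimension is $1$ if $x\le y$ and $0$ otherwise, and the general case follows by conjugating). Since this property is invariant under algebra isomorphism and $A\cong I(X)$, it transfers to the $\epsilon_\phi,\epsilon'_\psi$, and $V_{\phi\psi}=\epsilon_\phi A\epsilon'_\psi$. I expect verifying and transferring this thinness to be the main obstacle, since it is exactly where the combinatorics of $I(X)$, rather than the abstract bimodule structure, enters.

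Next I would bring in the grading. For $h\in\mathcal{H}_{ij}$ the conjugation $c_h\colon m\mapsto hmh^{-1}$ is well defined (as $h\in H_i\cap H_j$) and degree-preserving (as $G$ is abelian), so $\{c_h\}$ is a representation of the finite abelian group $\mathcal{H}_{ij}$ by graded automorphisms of $M_{ij}$; as $\mathrm{char}\,\mathbb{F}\nmid|\mathcal{H}_{ij}|$ and the relevant roots of unity lie in $\mathbb{F}$, it is simultaneously diagonalizable. This gives a grading-compatible decomposition $M_{ij}=\bigoplus_{\chi\in\hat{\mathcal{H}}_{ij}}M_{ij}^\chi$ with $hm=\chi(h)mh$ for all $m\in M_{ij}^\chi$ and $h\in\mathcal{H}_{ij}$. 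Since $c_h$ acts on $V_{\phi\psi}$ by the scalar $\phi(h)\psi(h)^{-1}$, one has $M_{ij}^\chi=\bigoplus_{(\phi,\psi)\in S_\chi}V_{\phi\psi}$, where $S_\chi=\{(\phi,\psi):(\phi\psi^{-1})|_{\mathcal{H}_{ij}}=\chi\}$. A direct count (the restriction $\hat{H}_j\to\hat{\mathcal{H}}_{ij}$ is onto with fibres of size $|H_j|/|\mathcal{H}_{ij}|$) gives $|S_\chi|=|H_i||H_j|/|\mathcal{H}_{ij}|=|H_iH_j|$, whence $\dim M_{ij}^\chi\le|H_iH_j|$ by the previous step.

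Finally I would produce the cyclic summands. For each nonzero $M_{ij}^\chi$, which is a graded subspace, choose a homogeneous $0\ne m\in M_{ij}^\chi$. A short computation using $hm=\chi(h)mh$ for $h\in\mathcal{H}_{ij}$ shows that each $amb$ with $a\in H_i$, $b\in H_j$ is a scalar multiple of a fixed homogeneous element of its degree, so the homogeneous components of $\mathbb{F}H_i m\mathbb{F}H_j$ are at most one-dimensional and occur exactly in the degrees of the coset $(\deg m)H_iH_j$; as $a,b$ are invertible these components are nonzero, giving $\mathbb{F}H_i m\mathbb{F}H_j\cong(\mathbb{F}(H_iH_j))^{[\deg m]}$ of dimension $|H_iH_j|$. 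Combined with $\dim M_{ij}^\chi\le|H_iH_j|$ and the inclusion $\mathbb{F}H_i m\mathbb{F}H_j\subseteq M_{ij}^\chi$, this forces $M_{ij}^\chi=\mathbb{F}H_i m\mathbb{F}H_j$. Taking the $m_\ell$ to be such generators over the distinct characters $\chi$ with $M_{ij}^\chi\ne0$ yields the asserted decomposition with pairwise distinct $\chi^{(ij)}_\ell$, and since there is exactly one summand per such character, $s_{ij}\le|\hat{\mathcal{H}}_{ij}|=|\mathcal{H}_{ij}|$.
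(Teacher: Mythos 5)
Your proof is correct, and it reaches the theorem by a route that is recognizably parallel to the paper's but organized differently and more self-contained. The paper first obtains the decomposition $M_{ij}=\bigoplus_\ell\mathbb{F}H_im_\ell\mathbb{F}H_j$ with $hm_\ell=\chi_\ell(h)m_\ell h$ by decomposing the conjugation representation of $\mathcal{H}_{ij}$ on each homogeneous component (Lemma \ref{Lem1}), identifies each cyclic piece with a shift of $\mathbb{F}(H_iH_j)$ by making it a module over $\mathbb{F}H_i\otimes_{\mathbb{F}\mathcal{H}_{ij}}\mathbb{F}H_j\cong\mathbb{F}(H_iH_j)$ (Lemmas \ref{Lem0} and \ref{Lem2}), and only afterwards proves the characters are pairwise distinct by passing to the matrix realization via Lemma \ref{field} and counting the non-zero entries of each $\phi(m_\ell)$. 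You use the same two essential inputs --- the conjugation action of $\mathcal{H}_{ij}$ and the character-indexed primitive idempotents: your spaces $V_{\phi\psi}=\epsilon_\phi M_{ij}\epsilon'_\psi$ are precisely the matrix entries $\phi(m)(\mu,\mu')$ that the paper counts, and your thinness statement $\dim\epsilon_\phi A\epsilon'_\psi\le1$ is exactly the paper's entry bound --- but you assemble them in the opposite order: you first bound $\dim M_{ij}^\chi\le|S_\chi|\cdot 1=|H_iH_j|$ and then exhibit a cyclic submodule of that exact dimension inside each non-zero eigenspace, so the decomposition, the isomorphism type $(\mathbb{F}(H_iH_j))^{[\deg m_\ell]}$, the distinctness of the characters, and the bound $s_{ij}\le|\mathcal{H}_{ij}|$ all fall out at once. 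What this buys you is a bypass of the tensor-product machinery of Section \ref{S_bivectorspace} (your direct computation that the homogeneous components of $\mathbb{F}H_im\mathbb{F}H_j$ are one-dimensional, non-zero, and indexed by the coset $(\deg m)H_iH_j$ replaces Lemmas \ref{Lem0} and \ref{Lem2}), and the distinctness of the $\chi^{(ij)}_\ell$ needs no separate contradiction argument. The two points you rightly flag as needing care do go through: $\epsilon_\phi A\epsilon_\phi=\mathbb{F}\epsilon_\phi$ shows each $\epsilon_\phi$ is primitive in all of $A$, and the thinness transfers because your family $\{\epsilon_\phi\}\cup\{\epsilon'_\psi\}$ consists of pairwise orthogonal idempotents, hence is simultaneously diagonalizable inside $I(X)$ by Lemma \ref{simultaneously_diag} (alternatively, for primitive idempotents $ue_{xx}u^{-1}$ and $ve_{yy}v^{-1}$ one has $\dim ue_{xx}u^{-1}I(X)ve_{yy}v^{-1}=\dim e_{xx}I(X)e_{yy}\le1$ since $u^{-1}I(X)v=I(X)$).
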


Finally, given that the grading group is abelian, we solve the problem of classifying the isomorphism classes of group gradings on the incidence algebras (Theorem \ref{Thm_iso}).

\begin{Example}\label{ex2}
In Example \ref{ex1}, the graded algebra $I(X)$ is isomorphic to
\[
\left(\begin{array}{cc}
\mathbb{F}&M\\
0&\mathbb{F}\mathbb{Z}_2
\end{array}\right),
\]
where $M$ is isomorphic to $\mathbb{F}\mathbb{Z}_2$ as a right $\mathbb{F}\mathbb{Z}_2$-module.
\end{Example}

\section{Preliminaries\label{S_Preliminaries}}
\subsection{Incidence algebras} We provide the definition of an incidence algebra over a field $\mathbb{F}$. Let $(X,\le)$ be any partially ordered set (poset, for short). Assume that $(X,\le)$ is \emph{locally finite}, i.e., for all $x,y\in X$, there exists a finite number of $z\in X$ such that $x\le z\le y$. Define $I(X)=\{f:X\times X\to \mathbb{F}\mid f(x,y)=0,\forall x\not\le y\}$. Then $I(X)$ has a natural sum (pointwise sum), and natural scalar multiplication, which gives $I(X)$ a structure of $\mathbb{F}$-vector space. For $f,g\in I(X)$, we define $h=f\cdot g$ as the function $h$, such that $h(x,y)=\sum_{z\in X} f(x,z)g(z,y)$. Note that the unique possibly non-zero elements in the previous sum are the $z\in X$ such that $x\le z\le y$; hence, since $X$ is locally finite, the sum is well-defined. So $h\in I(X)$. It is standard to prove that $I(X)$, with the defined operations, is an associative algebra. The algebra $I(X)$ is called an \emph{incidence algebra}.

Note that the defined multiplication on $I(X)$ is similar to the product of matrices. Moreover, if $X$ is totally ordered and contains $n$ elements, then $I(X)\cong UT_n$. In connection, if $(X,\le_X)$ is arbitrary (not necessarilly totally ordered) and finite with $n$ elements, then we can rename the elements of $X=\{1,2,\ldots,n\}$ in such way that $i\le_X j$ implies $i\le j$ in the usual ordering of the integers. With this identification, we see that $I(X)\subset UT_n$ is a subalgebra. Thus, finite-dimensional incidence algebras are subalgebras of $UT_n$, generated by matrix units, and containing the diagonal matrices. As we are interested in finite-dimensional incidence algebras, we will assume from now on that $I(X)\subset UT_n$.

The incidence algebras are very interesting by its own and, moreover, they are related with other branches of Mathematics. They give rise to interesting and chalenging combinatorial problems. For an extensive theory on incidence algebras see, for instance, the book \cite{SpieDon}.

\subsection{Graded Jacobson radical}
It is fundamental in our arguments to guarantee that the Jacobson radical of the incidence algebras are graded ideals.

For this, we have the following result, due to Gordienko:
\begin{Lemma}[Corollary 3.3 of \cite{Gord}]
Let $A$ be a finite-dimensional algebra, graded by any group $G$. Suppose $\mathrm{char}\,\mathbb{F}=0$ or $\mathrm{char}\,\mathbb{F}>\dim A$. Then the Jacobson radical $J(A)$ is a graded ideal.\qed
\end{Lemma}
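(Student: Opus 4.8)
The plan is to realize $J(A)$ as the radical of a bilinear form that is visibly adapted to the grading. For $a\in A$ let $L_a\colon A\to A$ denote left multiplication and set $T(x,y)=\mathrm{tr}(L_{xy})$, the regular trace form on $A$. Since $\mathrm{tr}(L_xL_y)=\mathrm{tr}(L_yL_x)$, this form is symmetric, and it is associative, $T(xy,z)=\mathrm{tr}(L_{xyz})=T(x,yz)$; together these two properties force its radical $\mathrm{rad}\,T=\{x:T(x,A)=0\}$ to be a two-sided ideal. The easy half of the identification $J(A)=\mathrm{rad}\,T$ is the inclusion $J(A)\subseteq\mathrm{rad}\,T$: if $x\in J(A)$ then for every $y$ the product $xy$ lies in the nilpotent ideal $J(A)$, so $L_{xy}$ is nilpotent and $T(x,y)=0$.

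For the reverse inclusion I would pass to the radical filtration $A\supseteq J(A)\supseteq J(A)^2\supseteq\cdots$ and the associated graded space $W=\bigoplus_k J(A)^k/J(A)^{k+1}$. On each layer $x$ acts through its image $\bar x$ in $\bar A=A/J(A)$, so the trace is computed on $W$, giving $T(x,y)=\mathrm{tr}_W(\bar x\,\bar y)$, the trace form of the $\bar A$-module $W$. Since the top layer of $W$ is the regular module $\bar A$, the module $W$ is faithful over the semisimple algebra $\bar A$. Writing $\bar A\cong\prod_i M_{n_i}(D_i)$, I would then argue that the characteristic hypothesis makes the trace form of $W$ nondegenerate: in characteristic zero this is standard, while if $\mathrm{char}\,\mathbb{F}>\dim A$ then the integers $n_i$ and the multiplicities of the simple constituents of $W$ are all at most $\dim A$, hence nonzero in $\mathbb{F}$, and every field extension occurring has degree $<\mathrm{char}\,\mathbb{F}$ and is therefore automatically separable. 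Consequently, for $x\notin J(A)$ there is $y$ with $T(x,y)\neq0$, so $\mathrm{rad}\,T\subseteq J(A)$ and the two coincide. I expect this step---extracting nondegeneracy of the trace form on the semisimple quotient from the precise arithmetic of the hypothesis---to be the main obstacle.

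It then remains to see that $\mathrm{rad}\,T$ is graded, and here the grading enters cleanly. For homogeneous $x\in A_g$ and $y\in A_h$ the map $L_{xy}$ carries each $A_k$ into $A_{ghk}$, so in a homogeneous basis it has a nonzero diagonal entry only when $ghk=k$; hence $T(A_g,A_h)=0$ whenever $gh\neq1_G$, i.e. $T$ pairs $A_g$ solely with $A_{g^{-1}}$. Now take $x=\sum_g x_g\in\mathrm{rad}\,T$ with $x_g\in A_g$. For homogeneous $y\in A_h$ this orthogonality collapses the sum to $0=T(x,y)=T(x_{h^{-1}},y)$; letting $h$ and $y\in A_h$ range shows $T(x_g,A)=0$ for every $g$, so each homogeneous component $x_g$ again lies in $\mathrm{rad}\,T$. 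Therefore $J(A)=\mathrm{rad}\,T$ is a graded ideal, as asserted.
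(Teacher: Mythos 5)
The paper offers no proof of this lemma: it is quoted verbatim from Gordienko (Corollary 3.3 of \cite{Gord}) and used as a black box, so your argument has to be judged as a self-contained substitute, and as such its strategy is sound. You use the classical trace-form characterization of the radical: $T(x,y)=\mathrm{tr}(L_{xy})$ is symmetric (via $L_{xy}=L_xL_y$) and associative, so $\mathrm{rad}\,T$ is a two-sided ideal; $J(A)\subseteq\mathrm{rad}\,T$ because $L_{xy}$ is nilpotent for $x\in J(A)$; and $\mathrm{rad}\,T$ is automatically a graded subspace because $T(A_g,A_h)=0$ unless $gh=1_G$ --- your observation that $ghk=k$ forces $gh=1_G$ is correct for arbitrary, in particular nonabelian and infinite, $G$, which is exactly the generality the lemma claims. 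This route buys a short, elementary proof that makes visible the only place where the hypothesis $\mathrm{char}\,\mathbb{F}=0$ or $\mathrm{char}\,\mathbb{F}>\dim A$ enters: nondegeneracy of the trace form on the semisimple quotient. The reduction $T(x,y)=\mathrm{tr}_W(\bar x\bar y)$ with $W=\bigoplus_k J(A)^k/J(A)^{k+1}$ faithful over $A/J(A)$ is also correct, since $L_{xy}$ preserves the radical filtration and the top layer is the regular $A/J(A)$-module.

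There is one loose end, precisely in the step you flagged as the main obstacle. Over a general field the simple factors are $M_{n_i}(D_i)$ with $D_i$ a division algebra of index $d_i$ over its center $K_i$, and the trace of the regular representation of $D_i$ over $K_i$ equals $d_i$ times the reduced trace; consequently nondegeneracy of the trace form of $W$ requires, besides $\mathrm{char}\,\mathbb{F}$ not dividing the multiplicities and separability of the $K_i$, also $\mathrm{char}\,\mathbb{F}\nmid d_i$. Your list of ingredients omits this, though it follows from the same counting ($d_i^2\le\dim_{\mathbb{F}}D_i\le\dim A<\mathrm{char}\,\mathbb{F}$). Alternatively, you can bypass all Wedderburn data over $\mathbb{F}$: since $\mathrm{rad}\,T$ is cut out by linear equations, $\mathrm{rad}\,T_A=A\cap\mathrm{rad}\,T_{A\otimes_{\mathbb{F}}\bar{\mathbb{F}}}$; over $\bar{\mathbb{F}}$ only matrix algebras occur and the multiplicities are at most $\dim A$, so $\mathrm{rad}\,T_{A\otimes_{\mathbb{F}}\bar{\mathbb{F}}}=J(A\otimes_{\mathbb{F}}\bar{\mathbb{F}})$; finally $A\cap J(A\otimes_{\mathbb{F}}\bar{\mathbb{F}})$ is a nilpotent ideal of $A$, hence contained in $J(A)$, which closes the argument with no discussion of separability or division algebras at all. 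With either repair your proof is complete.
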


Now, if $I(X)$ has a $G$-grading, and $G$ is abelian, then the commutator is a homogeneous operation. Moreover, $[I(X),I(X)]=J(I(X))$ is the Jacobson Radical of the incidence algebra $I(X)$. Hence, $J(I(X))$ is a graded ideal. Thus, we proved

\begin{Lemma}\label{radical_graded}
Let $X$ be finite, and let $I(X)$ be $G$-graded. Assume at least one of the following conditions: $\mathrm{char}\,\mathbb{F}=0$, $\mathrm{char}\,\mathbb{F}>\dim I(X)$, or $G$ is abelian. Then the Jacobson radical of $I(X)$ is graded.\qed
\end{Lemma}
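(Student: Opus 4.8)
The statement splits into two cases according to the hypotheses, and the plan is to treat them separately. In the first case, when $\mathrm{char}\,\mathbb{F}=0$ or $\mathrm{char}\,\mathbb{F}>\dim I(X)$, the conclusion is immediate: I would apply the preceding lemma (Gordienko's Corollary 3.3 of \cite{Gord}) to the finite-dimensional algebra $A=I(X)$, since $\dim A=\dim I(X)$ makes the numerical hypothesis match verbatim. No further work is required in this case.

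The substance lies in the abelian case, and for that my plan is to identify the Jacobson radical with the linear span of commutators and then exploit the commutativity of $G$. First I would recall that for the finite-dimensional incidence algebra $J(I(X))$ is precisely the span of the matrix units $e_{ij}$ with $i<j$ (the strictly upper part), while the diagonal $\bigoplus_i \mathbb{F}e_{ii}$ is a complementary (semisimple) subalgebra. Then I would show $J(I(X))=[I(X),I(X)]$. For the inclusion $[I(X),I(X)]\subseteq J(I(X))$, decomposing $a,b\in I(X)$ into their diagonal and radical parts and using that the diagonal is commutative while the radical is an ideal shows that every commutator lands in $J(I(X))$. For the reverse inclusion, the identity $[e_{ii},e_{ij}]=e_{ij}$ for $i<j$ exhibits each spanning element of the radical as a single commutator, so $J(I(X))\subseteq[I(X),I(X)]$.

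Having established $J(I(X))=[I(X),I(X)]$, the final step is to observe that when $G$ is abelian the commutator is a homogeneous bilinear operation: for homogeneous $a\in I(X)_g$ and $b\in I(X)_h$ one has $ab\in I(X)_{gh}$ and $ba\in I(X)_{hg}=I(X)_{gh}$, so $[a,b]=ab-ba\in I(X)_{gh}$ is again homogeneous. Since every element of $I(X)$ decomposes into homogeneous components and the commutator is bilinear, the space $[I(X),I(X)]$ is spanned by homogeneous elements, hence is a graded subspace; being an ideal as well, it is a graded ideal, which is exactly the desired conclusion.

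I expect the main obstacle to be the identification $J(I(X))=[I(X),I(X)]$, which relies on the explicit description of the radical of an incidence algebra together with the short verification that the span of commutators captures precisely the strictly upper triangular part. The remaining arguments, namely the numerical case and the grading-compatibility of the commutator, are routine once this identification is in place.
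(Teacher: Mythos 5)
Your proposal is correct and follows the paper's proof exactly: the first case is dispatched by Gordienko's result, and the abelian case rests on the identification $J(I(X))=[I(X),I(X)]$ together with the homogeneity of the commutator when $G$ is abelian. The only difference is that you verify the identity $J(I(X))=[I(X),I(X)]$ in detail, whereas the paper states it as a known fact; your verification is accurate.
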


\subsection{Group algebras over fields}
We include in this subsection a few facts concerning the group algebras, which will be essential for our purposes. We do not include the proofs, since the results are classical, or an adaptation of classical proofs when the base field is $\mathbb{C}$, the field of complex numbers.

Let $\mathbb{F}$ be a field, $G$ a group, and we denote $\hat G=\{\chi:G\to\mathbb{F}\text{ group homomorphisms}\}$.

\begin{Lemma}\label{equiv_group_algebra}
Assume that $G$ is abelian finite. The following assertions are equivalent:
\begin{enumerate}
\renewcommand{\labelenumi}{(\roman{enumi})}
\item $G\cong\hat G$,
\item $\mathbb{F}G\cong\mathbb{F}^{\hat G}$ (as $\mathbb{F}$-algebras),
\item for every $g\in G$ having order $q$, $\mathrm{char}\,\mathbb{F}$ does not divide $q$, and $\mathbb{F}$ contains a primitive $q$-root of unit.
\end{enumerate}\qed
\end{Lemma}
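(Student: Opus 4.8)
The plan is to prove the equivalence of the three assertions about a finite abelian group $G$ by establishing a cycle of implications, relying on the structure theorem for finite abelian groups and the theory of characters. Throughout, I would decompose $G$ as a direct product of cyclic groups, $G\cong\mathbb{Z}_{q_1}\times\cdots\times\mathbb{Z}_{q_r}$, since all three conditions are controlled by the behaviour of $\mathbb{F}$ on each cyclic factor, and the character group $\hat G$ factors accordingly as $\hat{\mathbb{Z}}_{q_1}\times\cdots\times\hat{\mathbb{Z}}_{q_r}$. The key observation linking everything is that for a single cyclic group $\mathbb{Z}_q$, the character group $\hat{\mathbb{Z}}_q$ is identified with the group of $q$-th roots of unity that $\mathbb{F}$ actually contains: a homomorphism $\chi:\mathbb{Z}_q\to\mathbb{F}^\times$ is determined by the image of a generator, which must be a root of $T^q-1$ in $\mathbb{F}$.

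First I would prove (iii) $\Rightarrow$ (i). Assuming $\mathrm{char}\,\mathbb{F}\nmid q$ for every order $q$ of an element, and that $\mathbb{F}$ contains a primitive $(\exp G)$-th root of unity, each factor $\mathbb{Z}_{q_i}$ contributes a full cyclic group of $q_i$ distinct characters, so $|\hat G|=\prod q_i=|G|$ and, being a product of cyclic groups of the same orders, $\hat G\cong G$. Next, for (i) $\Rightarrow$ (iii), I would argue contrapositively: if some $q_i$ is divisible by $\mathrm{char}\,\mathbb{F}$, or if $\mathbb{F}$ lacks a primitive $q_i$-th root of unity, then the polynomial $T^{q_i}-1$ has fewer than $q_i$ roots in $\mathbb{F}$ (either because of repeated roots in the first case, or simply too few in the second), forcing $|\hat{\mathbb{Z}}_{q_i}|<q_i$ and hence $|\hat G|<|G|$, contradicting $G\cong\hat G$.

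For the equivalence with (ii), I would use the fact that $\mathbb{F}G$ is commutative and finite-dimensional, so $\mathbb{F}G\cong\mathbb{F}^{\hat G}$ precisely when $\mathbb{F}G$ is split semisimple, i.e. isomorphic to a product of copies of $\mathbb{F}$ indexed by its one-dimensional representations, which are exactly the characters in $\hat G$. Here (ii) $\Rightarrow$ the count $\dim_{\mathbb{F}}\mathbb{F}G=|G|=|\hat G|$ together with semisimplicity (no nilpotents), which rules out both repeated roots, giving the non-divisibility in (iii), and the presence of all needed roots of unity. Conversely, under (iii) each group algebra $\mathbb{F}\mathbb{Z}_{q_i}\cong\mathbb{F}[T]/(T^{q_i}-1)$ splits into distinct linear factors by the Chinese Remainder Theorem, so $\mathbb{F}\mathbb{Z}_{q_i}\cong\mathbb{F}^{q_i}$, and tensoring over the factors yields $\mathbb{F}G\cong\mathbb{F}^{|G|}\cong\mathbb{F}^{\hat G}$.

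The main obstacle I anticipate is bookkeeping the passage between $\exp G$ and the individual orders $q_i$: condition (iii) is phrased in terms of every element's order, whereas the cleanest counting argument works factor by factor, and one must check that containing a primitive $(\exp G)$-th root of unity is equivalent to containing a primitive $q_i$-th root for each $i$ (which follows since each $q_i$ divides $\exp G$, and a primitive $d$-th root of unity exists whenever one of any multiple of $d$ does, under the non-divisibility hypothesis). The separability of $T^q-1$, hinging on $\mathrm{char}\,\mathbb{F}\nmid q$ so that $\gcd(T^q-1,qT^{q-1})=1$, is the pivot making the semisimplicity and the root-counting arguments line up; I would isolate this as the technical heart of the proof.
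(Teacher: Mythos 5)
Your proof is correct. There is nothing in the paper to compare it against: the lemma is stated without proof, the authors remarking just beforehand that the results of that subsection are classical or adaptations of classical proofs over $\mathbb{C}$; your argument is exactly the standard one being alluded to (cyclic decomposition of $G$, identification of $\hat{\mathbb{Z}}_q$ with the group of $q$-th roots of unity in $\mathbb{F}$, separability of $T^q-1$ when $\mathrm{char}\,\mathbb{F}\nmid q$, and the Chinese Remainder splitting of $\mathbb{F}[T]/(T^q-1)$ together with $\mathbb{F}[A\times B]\cong\mathbb{F}A\otimes_{\mathbb{F}}\mathbb{F}B$). The one delicate point is the one you flag yourself: condition (iii) quantifies over all element orders, while your counting runs over the chosen cyclic factors. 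It is cleanest to take the invariant-factor decomposition $q_1\mid q_2\mid\cdots\mid q_r$, so that $\exp G=q_r$; then a failure of (iii) at any element order $q$ propagates to a failure at $q_r$ (if $\mathrm{char}\,\mathbb{F}$ divides $q$ it divides $q_r$, and if $\mathbb{F}$ had a primitive $q_r$-th root of unity it would have a primitive $q$-th root), and conversely (iii) for the factors $q_i$ yields (iii) for every element order, since every element order divides $q_r$. With that bookkeeping made explicit, all the implications you outline go through, and your observation that the dimension count $|G|=|\hat G|$ already forces each $T^{q_i}-1$ to have exactly $q_i$ distinct roots in $\mathbb{F}$ even shortens the (ii)$\Rightarrow$(iii) step.
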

For instance, we will have the following situation. A $G$-graded algebra $A$ admits a subalgebra $D\subseteq A$ which is, simultaneously isomorphic to $\mathbb{F}^m$ as ordinary algebras, and to $\mathbb{F}H$ as graded algebras, for some abelian subgroup $H\subseteq G$. Then, in this case, the field $\mathbb{F}$ must satisfy the properties (iii) of the lemma above.

\begin{Lemma}\label{field}
Let $G$ be finite, assume that $\hat G\cong G$, and let $\hat G=\{\chi_1,\ldots,\chi_m\}$. The map $\psi:\mathbb{F}G\to\mathbb{F}^m$, given by
\begin{equation}\label{iso_fields}
\psi(g)=(\chi_1(g),\ldots,\chi_m(g))
\end{equation}
is an isomorphism of algebras. Moreover, every isomorphism of algebras $\psi':\mathbb{F}G\to\mathbb{F}^m$ is given by \eqref{iso_fields}, up to a permutation of the indexes.\qed
\end{Lemma}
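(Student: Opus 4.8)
The plan is to first check that $\psi$ is a homomorphism of unital algebras, and then upgrade it to an isomorphism by a dimension count together with injectivity. On the basis $G$ one computes $\psi(gh)=(\chi_i(gh))_i=(\chi_i(g)\chi_i(h))_i=\psi(g)\psi(h)$, using that each $\chi_i$ is a group homomorphism and that the product in $\mathbb{F}^m$ is componentwise; since $\psi(1)=(1,\ldots,1)$, extending linearly yields a unital algebra homomorphism. Because $\hat G\cong G$ forces $G$ to be abelian (as $\hat G$ is always abelian, being valued in the multiplicative group $\mathbb{F}^\times$) and gives $|\hat G|=m=|G|$, the domain and codomain both have dimension $m$, so it suffices to prove injectivity. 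Writing $M=(\chi_i(g))_{i,g}$ for the character matrix, the map $\psi$ sends a coordinate vector $(a_g)_g$ to $M(a_g)_g$, so injectivity is exactly the invertibility of $M$. This I would obtain from the linear independence of distinct characters (Dedekind): the $m$ vectors $(\chi_i(g))_{g\in G}\in\mathbb{F}^m$ are linearly independent, hence form a basis, so $M$ is invertible. Therefore $\psi$ is bijective and an algebra isomorphism.

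For the ``moreover'' part, I would describe every algebra isomorphism $\psi':\mathbb{F}G\to\mathbb{F}^m$ through its components $\pi_i\circ\psi':\mathbb{F}G\to\mathbb{F}$, where $\pi_i$ is the projection onto the $i$-th coordinate. The key observation is that the unital algebra homomorphisms $\mathbb{F}G\to\mathbb{F}$ are precisely the characters in $\hat G$: any such homomorphism $\phi$ sends each $g$ to a nonzero element (as $g$ is invertible in $\mathbb{F}G$) satisfying $\phi(gh)=\phi(g)\phi(h)$ and $\phi(1)=1$, so $\phi|_G\in\hat G$, and conversely each character extends linearly to an algebra map. Hence $\pi_i\circ\psi'=\chi_{\sigma(i)}$ for some index $\sigma(i)$, and consequently $\psi'(x)=(\chi_{\sigma(1)}(x),\ldots,\chi_{\sigma(m)}(x))$, i.e. $\psi'$ is given by \eqref{iso_fields} up to the reindexing $\sigma$.

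It remains to argue that $\sigma$ is a permutation. If $\sigma(i)=\sigma(j)$ for some $i\ne j$, then the image of $\psi'$ would be contained in the hyperplane $\{v\in\mathbb{F}^m:v_i=v_j\}$, contradicting surjectivity; thus $\sigma$ is injective, hence bijective on the $m$-element index set. I expect the main obstacle to be the invertibility of the character matrix $M$ in the first part: over an arbitrary field one cannot simply invoke the usual orthogonality relations, so the argument must go through Dedekind's independence of characters, and this is precisely where the standing hypothesis $\hat G\cong G$ is used, since it guarantees that there are as many distinct characters as group elements, making $M$ square of full rank.
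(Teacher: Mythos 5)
Your proof is correct. The paper itself gives no argument for this lemma (it is stated with a \qed and the preceding remark that such group-algebra facts are classical), and what you have written — Dedekind's linear independence of characters to invert the character matrix, plus the identification of the coordinate functionals $\pi_i\circ\psi'$ of any isomorphism with elements of $\hat G$ and the surjectivity argument forcing $\sigma$ to be a permutation — is exactly the standard classical proof the authors are implicitly invoking.
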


\section{Group gradings on incidence algebras\label{S_groupgradings}}
Let $(X,\le_X)$ be a finite poset, and let $n=|X|$. We fix a field $\mathbb{F}$ and $G$ any group with multiplicative notation and neutral element $1$, and we assume one of the conditions of Lemma \ref{radical_graded}. As mentioned before, we can rename the elements of $X$ as $X=\{1,2,\ldots,n\}$, and $i\le_X j$ implies $i\le j$ in the usual ordering of the integers. Hence, we can identify $I(X)\subset UT_n$. If $1\le i\le j\le n$ and $x\in I(X)$, $x(i,j)$ denotes the entry $(i,j)$ of the element $x$.

By direct computation, it is easy to see that for any invertible $M\in I(X)$, we have $M^{-1}\in I(X)$, and it is a well-known result.

\begin{Lemma}\label{ref_here}
	Let $e\in I(X)$ be an idempotent. Then there exists $M\in I(X)$ such that $M^{-1}eM$ is diagonal.
\end{Lemma}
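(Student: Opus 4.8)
The plan is to exploit the fact that, modulo its nilpotent radical, the idempotent $e$ agrees with its diagonal part, and then to straighten $e$ onto that diagonal by the classical conjugacy trick for idempotents, being careful that the conjugating element stays inside $I(X)$ rather than merely in $UT_n$.

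First I would write $e=D+N_0$, where $D$ is the diagonal part of $e$ and $N_0$ is the strictly upper triangular part; both lie in $I(X)$, since $I(X)$ contains all diagonal matrices. Because $e$ is upper triangular, inspecting the $(i,i)$ entry of $e^2=e$ gives $e(i,i)^2=e(i,i)$, so each diagonal entry is $0$ or $1$ and $D$ is itself an idempotent, $D^2=D$. Next I would observe that $N:=\{x\in I(X):x(i,i)=0\text{ for all }i\}$ is a two-sided ideal of $I(X)$ which is nilpotent (it is the intersection of $I(X)$ with the nilpotent ideal of strictly upper triangular matrices), and that $e\equiv D\pmod N$.

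The key step is to set $a=De+(1-D)(1-e)$, which lies in $I(X)$ because it is built from $e,D,1\in I(X)$. A short computation using $D^2=D$ and $e^2=e$ yields $ae=De=Da$; in particular $ae=Da$. Reducing modulo $N$ and using $e\equiv D$ shows $a\equiv D\cdot D+(1-D)(1-D)=D+(1-D)=1\pmod N$, so $1-a\in N$ is nilpotent and $a$ is invertible with $a^{-1}\in I(X)$ (recall that invertible elements of $I(X)$ have their inverse in $I(X)$). Setting $M=a^{-1}$, the identity $ae=Da$ gives $M^{-1}eM=aea^{-1}=D$, which is diagonal, as required.

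The main obstacle is not the algebraic identity itself, which is standard, but ensuring that the conjugator can be taken in the subalgebra $I(X)$; the slick point is that the explicit formula $a=De+(1-D)(1-e)$ is manifestly an element of $I(X)$, so no ambient diagonalization in $M_n$ or $UT_n$ has to be corrected afterwards. I note that this argument uses neither the grading hypotheses nor any restriction on $\mathrm{char}\,\mathbb{F}$. As an alternative I could argue by induction on $n$, repeatedly conjugating by elementary factors $1+\lambda e_{ij}$ with $i\le_X j$ to clear the off-diagonal entries of $e$ one antidiagonal at a time, but the closed-form conjugator above is cleaner and avoids the bookkeeping.
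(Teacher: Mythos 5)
Your proof is correct, and it takes a genuinely different route from the paper's. The paper follows the column-vector construction of Valenti--Zaicev: for each $i$ it picks $f_i$ to be whichever of $e\cdot e_i$ or $e_i-e\cdot e_i$ has nonzero $i$-th entry, observes that these are eigenvectors of $e$ whose supports respect the poset, and assembles them into an invertible $M\in I(X)$ with $M^{-1}eM$ diagonal. You instead use the classical closed-form conjugator for idempotents congruent modulo a nil ideal: having checked that the diagonal part $D$ of $e$ is itself an idempotent and that $e\equiv D$ modulo the nilpotent ideal $N$ of off-diagonal elements, the element $a=De+(1-D)(1-e)$ satisfies $ae=De=Da$ and $a\equiv 1\pmod N$, so $a$ is invertible in $I(X)$ and $aea^{-1}=D$. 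The computations $ae=De$ and $Da=De$ are right (each uses $e^2=e$, respectively $D^2=D$, to kill the cross terms), and membership of $a^{-1}$ in $I(X)$ is covered by the remark in the paper that inverses of invertible elements of $I(X)$ stay in $I(X)$ (or directly by the geometric series in $N$). Your argument buys a one-line formula for the conjugator and makes explicit that the resulting diagonal idempotent is precisely the diagonal part $D$ of $e$; the paper's argument is more constructive and generalizes immediately to the simultaneous diagonalization of several orthogonal idempotents in Lemma \ref{simultaneously_diag}, which is how it is reused there. Both proofs are elementary and use neither the grading nor any hypothesis on $\mathrm{char}\,\mathbb{F}$.
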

\begin{proof}
	The proof is similar to the proof of Lemma 1 of \cite{VaZa2007}.

	We assume that $I(X)$ acts on the left on the $n\times 1$ matrices. For each $i=1,2,\ldots,n$, let
	$$
		e_i=\left(\begin{array}{c}0\\0\\\vdots\\1\\\vdots\\0\end{array}\right),
	$$
	having non-zero entry only in the position $(i,1)$. One, and only one, between $e\cdot e_i$ or $e_i-e\cdot e_i$ will have non-zero entry at $(i,1)$, where $e\cdot e_i$ is the usual matrix multiplication. Let $f_i$ be either $e\cdot e_i$, or $e_i-e\cdot e_i$, the unique matrix with the non-zero entry in $(i,1)$.

	Let $M$ be the matrix such that the columns are $f_1,f_2,\ldots,f_n$. By construction, $M\in I(X)$. Moreover, since the diagonal entries of $M$ are non-zero, it is invertible. Also, $f_1,f_2,\ldots,f_n$ are eigenvectors of $e$. Thus, $M^{-1}eM$ is diagonal.
\end{proof}

A classical result says that a set of diagonalizable matrices, which pairwise commutes, is simultaneously diagonalizable. We have a similar result for $I(X)$:
\begin{Lemma}\label{simultaneously_diag}
Let $e_1,\ldots,e_s\in I(X)$ be orthogonal idempotents. Then there exists $M\in I(X)$ such that $M^{-1}e_iM$ is diagonal, for each $i=1,2,\ldots,s$.
\end{Lemma}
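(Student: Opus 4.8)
The plan is to prove this by induction on $s$, the number of orthogonal idempotents, leveraging the single-idempotent case (Lemma~\ref{ref_here}) as the base case, and exploiting the commutativity coming from orthogonality to carry the induction through. The key observation driving the argument is that orthogonal idempotents commute pairwise: if $e_ie_j=e_je_i=0$ for $i\neq j$, then in particular all the $e_i$ commute. After conjugating by the matrix $M$ produced for $e_1$ via Lemma~\ref{ref_here}, the remaining idempotents $M^{-1}e_iM$ (for $i\geq 2$) stay in $I(X)$, remain orthogonal idempotents, and — crucially — each commutes with the now-diagonal $M^{-1}e_1M$. I would then show that conjugation can be chosen to fix the already-diagonalized $e_1$.

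The central step is a \emph{block-refinement} argument. Once $d_1:=M^{-1}e_1M$ is diagonal (hence a diagonal projection onto some subset of coordinates $S\subseteq\{1,\ldots,n\}$), I want to diagonalize the remaining $M^{-1}e_iM$ \emph{without disturbing} $d_1$. The natural way is to look for a further conjugating matrix $N\in I(X)$ that commutes with $d_1$, so that $N^{-1}d_1N=d_1$ stays diagonal. The idempotents $M^{-1}e_iM$ for $i\geq2$ each commute with $d_1$, and $d_1$ splits the index set into $S$ and its complement; the orthogonality forces these idempotents to respect this splitting in a controlled way, and one applies the inductive hypothesis to the ``smaller'' pieces. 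The practical realization mirrors the proof of Lemma~\ref{ref_here}: build the conjugating matrix column by column out of common eigenvectors of $e_2,\ldots,e_s$, checking at each coordinate $i$ that the appropriate eigenvector lies in $I(X)$ (i.e. has nonzero $(i,i)$-entry and vanishes below the partial order), and that the resulting matrix is again in $I(X)$ and invertible.

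Concretely, I would argue that since $e_1,\ldots,e_s$ are commuting idempotents, the standard basis vectors $e_i$ (the columns from Lemma~\ref{ref_here}) can be simultaneously projected: for each coordinate $i$, exactly one of the $2^s$ sign-combinations $\prod_k (\pm) (\text{projection by } e_k)$ applied to $e_i$ has a nonzero $(i,1)$-entry, and this vector is a simultaneous eigenvector of all the $e_k$. Assembling these into the columns of a single matrix $M\in I(X)$ — invertible because its diagonal entries are nonzero — simultaneously diagonalizes all the $e_k$. This is the cleanest route and essentially generalizes Lemma~\ref{ref_here} directly; the induction on $s$ is then only a convenient bookkeeping device rather than strictly necessary.

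\textbf{The main obstacle} I anticipate is verifying that the simultaneous common-eigenvector construction stays inside $I(X)$ — that is, that the chosen eigenvector $f_i$ for coordinate $i$ not only has nonzero entry at position $i$ but also has zero entries in all positions $j$ with $j\not\geq_X i$ (equivalently $j\not\leq i$ in the poset sense dictating membership in $I(X)$). This requires checking that each projection $e_k\cdot(-)$ preserves the ``triangular'' support constraint, which follows because each $e_k\in I(X)\subseteq UT_n$ respects the poset, but it must be confirmed that taking the specific sign-combination does not accidentally introduce support outside $I(X)$. Once this support-preservation is established, the invertibility (nonzero diagonal) and the eigenvector property are routine, and the simultaneous diagonalization follows immediately.
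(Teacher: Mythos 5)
Your proposal is correct and, in its realized form (the direct construction of the columns as simultaneous eigenvectors with nonzero diagonal entry), is essentially the paper's argument: the paper organizes it as an induction on $s$, replacing each column $f_j$ by $e_sf_j$ or $f_j-e_sf_j$ at the inductive step, whereas you assemble all columns at once from the complete orthogonal system $e_1,\ldots,e_s,\,1-\sum_k e_k$ (your $2^s$ sign-combinations collapse to these $s+1$ projections by orthogonality). The obstacle you flag resolves exactly as you suggest: each chosen $f_i$ is the $i$-th column of one of these matrices, all of which lie in $I(X)$, so the support constraint and the invertibility of $M$ are automatic.
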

\begin{proof}
The proof will be by induction on $s$. If $s=1$, then the result follows from the previous lemma. So, assume $s>1$, and $f_1,\ldots,f_n$ is a basis such that $e_1,\ldots,e_{s-1}$ are diagonal, and the matrix $M$ having $f_1,\ldots,f_n$ as its columns lies in $I(X)$.

Fix $j\in\{1,\ldots,n\}$. If $e_sf_j=0$, then $f_j$ is an eigenvector of $e_s$ with eigenvalue $0$, and there is nothing to do. If there is $i<s$ such that $e_if_j=f_j$, then $e_sf_j=e_se_if_j=0$, hence we are in the previous case. So, assume that $e_sf_j\ne0$ and $e_if_j=0$, for all $i<s$. As in the previous lemma, we can change $f_j$ to $e_sf_j$ or $f_j-e_sf_j$, in such a way that this new $f_j$ still form a basis $\{f_1,\ldots,f_n\}$ of $\mathbb{F}^n$. By construction, $e_if_j=0$, for this new $f_j$, and $e_sf_j$ is either $0$ or $f_j$. Continuing the process, we obtain $f_1,\ldots,f_s$ such that the matrix $M$, having these elements as its columns, is in $I(X)$, and every $e_i$ is diagonal in this basis.
\end{proof}

\begin{Lemma}\label{idem_elem}
	Let $x\in UT_n$ be such that $x=e+b$, where $e^2=e\ne0$ is a diagonal matrix, and $b$ is a strict upper triangular matrix. Then there exists a non-zero idempotent in $\mathrm{Span}\{x,x^2,\ldots\}$. In other words, there exists a non-zero polynomial $p(\lambda)\in \mathbb{F}[\lambda]$, such that $p(0)=0$ and $p(x)$ is a non-zero idempotent.
\end{Lemma}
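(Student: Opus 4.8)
The plan is to exhibit $p(x)$ as the spectral (Fitting) projection of $x$ onto its eigenvalue-$1$ part, and to produce it explicitly by the Chinese Remainder Theorem in $\mathbb{F}[\lambda]$. Since $b$ is strictly upper triangular, $x=e+b$ has the same diagonal as $e$; hence every diagonal entry of $x$ equals $0$ or $1$, and because $e\neq 0$ at least one of them is $1$. As $x$ is upper triangular, its characteristic polynomial is therefore $\chi(\lambda)=\lambda^{a}(\lambda-1)^{c}$, where $a$ and $c$ count the zeros and the ones on the diagonal, so that $a\geq 0$ and $c\geq 1$. I want a polynomial that evaluates to $0$ on the eigenvalue-$0$ part of $x$ and to $1$ on the eigenvalue-$1$ part, while having no constant term.

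First I would use that $\lambda^{\max(a,1)}$ and $(\lambda-1)^{c}$ are coprime to choose, by the Chinese Remainder Theorem, a polynomial $p(\lambda)$ with $p\equiv 0\pmod{\lambda^{\max(a,1)}}$ and $p\equiv 1\pmod{(\lambda-1)^{c}}$. Three facts then need to be verified. The congruence modulo $\lambda^{\max(a,1)}$ forces $p(0)=0$, so $p(x)\in\mathrm{Span}\{x,x^{2},\ldots\}$. For idempotency, note that $\lambda^{a}\mid p$ and $(\lambda-1)^{c}\mid p-1$, so the coprime product $\lambda^{a}(\lambda-1)^{c}=\chi$ divides $p(p-1)$; by Cayley--Hamilton $\chi(x)=0$, whence $p(x)^{2}-p(x)=0$. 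Finally, $p(x)$ is again upper triangular, and its diagonal is obtained by applying $p$ to the diagonal of $x$; since $p(0)=0$ and $p(1)=1$, this diagonal coincides with that of $e$ and is non-zero, so $p(x)\neq 0$.

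The point requiring the most care is reconciling idempotency with the requirement $p(0)=0$. The textbook recipe for the spectral projection asks only for $p\equiv 0\pmod{\lambda^{a}}$ and $p\equiv 1\pmod{(\lambda-1)^{c}}$; but in the degenerate case $a=0$ (that is, $e=I$ and $x$ invertible) the first congruence is vacuous and the projection is the identity, so this recipe returns $p(0)=1$ and $p(x)$ does \emph{not} lie in the span of the positive powers of $x$. Replacing the exponent $a$ by $\max(a,1)$ repairs this: it forces $p(0)=0$ yet leaves idempotency intact, because the divisibility $\chi\mid p(p-1)$ only needs $\lambda^{a}\mid p$, which the stronger congruence still provides. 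I would also remark that no hypothesis on $\mathrm{char}\,\mathbb{F}$ enters, the argument using only that $0\neq 1$ in $\mathbb{F}$ and that $\mathbb{F}[\lambda]$ is a principal ideal domain.
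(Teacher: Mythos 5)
Your proof is correct, and it takes a genuinely different route from the paper's. The paper passes to an algebraic closure $\bar{\mathbb{F}}$, conjugates $x$ into Jordan form so that it splits into a unipotent block and a nilpotent block, raises $x$ to a power $s$ to annihilate the nilpotent block, and then applies Cayley--Hamilton to the remaining unipotent block $J$ to produce a polynomial with $p(J)=1$ and $p(0)=0$; the idempotent is $p(x^s)$. You instead stay entirely over $\mathbb{F}$: you read off the characteristic polynomial $\lambda^{a}(\lambda-1)^{c}$ directly from the diagonal of the upper triangular matrix $x$, build the spectral projection by the Chinese Remainder Theorem, and get idempotency from $\chi\mid p(p-1)$ together with Cayley--Hamilton. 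Your version avoids both the field extension and the Jordan form, produces the idempotent as an explicit polynomial in $x$ itself rather than in a power of $x$, and your replacement of the exponent $a$ by $\max(a,1)$ cleanly handles the degenerate case $e=I$, which the paper's argument absorbs implicitly (there the whole matrix is the unipotent block and $p(x^{s})$ still has zero constant term as a polynomial in $x$). The one step worth spelling out, which you do correctly, is that $p\not\equiv 0$ because $p(1)=1$, and that nonvanishing of $p(x)$ follows from its diagonal agreeing with that of $e$. Both arguments are free of any hypothesis on $\mathrm{char}\,\mathbb{F}$.
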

\begin{proof}
Let $\bar{\mathbb{F}}$ an algebraic closure of $\mathbb{F}$. By Jordan canonical form, there exists an invertible matrix $m$ such that
$$
	mxm^{-1}=\left(\begin{array}{cccccc}1&&\ast&0&\cdots&0\\
		&\ddots&&\vdots&&\vdots\\
						&&1&0&\cdots&0\\
						&&&0&&\ast\\
						&&&&\ddots&\\
						&&&&&0
	\end{array}\right).
$$
With an adequate $s>1$, we obtain
$$
	mx^sm^{-1}=\left(\begin{array}{cc}J&0\\0&0\end{array}\right),
$$
	where $J$ is a triangular matrix, with $1$'s in the diagonal. Now, there exists a polynomial $p(\lambda)$ such that $p(J)=1$, the identity matrix. Indeed, let $q(\lambda)=\sum_{i\ge0} a_i\lambda^i$ be the characteristic polynomial of $J$. By the Cayley-Hamilton Theorem, $q(J)=0$. Moreover, since $q(0)=a_0=\det J=1$, we obtain that $p(\lambda)=\sum_{i>0}-a_i\lambda^i$ satisfies $p(J)=1$.

	Let $e'=p(x^s)$, then $me'm^{-1}=\left(\begin{array}{cc}1&0\\0&0\end{array}\right)$, hence $e^{\prime2}=e'$, as desired.
\end{proof}

Fix any $G$-grading on $I(X)$. Define
$$
\mathscr{E}(X)=\{\text{$e\in I(X)$ non-zero homogeneous idempotent}\}.
$$
Of course it is non-empty since $1\in\mathscr{E}(X)$. For $e_1,e_2\in\mathscr{E}(X)$, let $e_1\preceq e_2$ denote (the non-necessarily partial order) $e_1I(X)\subseteq e_2I(X)$ (usually, $\preceq$ is called a preordering). Let $e\in\mathscr{E}(X)$ be a minimal element according to $\preceq$ (since $I(X)$ has finite dimension, we can always find such minimal $e$). Note that $\deg e=1$, since $e^2=e$. By Lemma \ref{ref_here}, we can assume $e$ diagonal, up to a graded isomorphism. The algebra $B=eI(X)e$ has a natural $G$-grading, induced from the grading of $I(X)$. We will prove that $B$ is a (commutative) graded division algebra.

\begin{Lemma}\label{finite_order}
	For every homogeneous $x\in B$, $x\notin J(B)$, $\deg x$ has finite order.
\end{Lemma}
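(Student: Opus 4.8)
The plan is a degree-counting argument, and it hinges on the fact that, because $e$ has been put into diagonal form, $B=eI(X)e$ is itself a (finite-dimensional) incidence algebra. Concretely, writing $S=\{i : e_{ii}\neq 0\}$, the algebra $B$ is $I(S)$ with the order induced from $X$, realized inside $UT_{|S|}$; in particular its Jacobson radical $J(B)$ is precisely the span of the strictly-upper matrix units $e_{ij}$ with $i<_S j$, and $B/J(B)\cong\mathbb{F}^{|S|}$. So the first thing I would record is the elementary observation that, in such a $B$, an element is nilpotent if and only if it lies in $J(B)$: writing $y=d+b$ with $d$ diagonal and $b$ strictly upper triangular, one has $y^k=d^k+(\text{strictly upper triangular})$, so $y$ nilpotent forces $d^k=0$, whence $d=0$ and $y\in J(B)$; the converse is immediate since strictly upper triangular matrices are nilpotent.

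With that in hand, fix a homogeneous $x\in B$ with $g=\deg x$, and suppose for contradiction that $g$ has infinite order. Then the powers $g,g^2,g^3,\ldots$ are pairwise distinct. Since the grading on $B$ is inherited from $I(X)$, each $x^k$ is homogeneous of degree $g^k$, so the nonzero members of $\{x^k : k\geq 1\}$ are homogeneous elements of pairwise distinct degrees and are therefore linearly independent. As $\dim_{\mathbb{F}}B<\infty$, only finitely many powers of $x$ can be nonzero, so $x^N=0$ for some $N$; that is, $x$ is nilpotent. By the observation above this gives $x\in J(B)$, contradicting the hypothesis $x\notin J(B)$. Hence $g=\deg x$ must have finite order, which is exactly the claim.

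Notably, this route bypasses Lemma \ref{idem_elem} and the minimality of $e$ entirely (those I expect to be needed only afterward, to upgrade $B$ to a graded division algebra). The only place requiring genuine care — and the step I would single out as the main point — is the identification of $J(B)$ with the strictly-upper-triangular part and the resulting equivalence ``nilpotent $\iff$ in $J(B)$''. This is what makes ``$x$ nilpotent'' transfer to the hypothesis ``$x\notin J(B)$'', and it relies on having first reduced $e$ to diagonal form via Lemma \ref{ref_here}, so that $B$ really is an incidence algebra sitting in upper triangular form rather than an abstract corner of $I(X)$.
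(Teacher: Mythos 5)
Your argument is correct and is essentially the paper's own proof: both hinge on the observation that $x\notin J(B)$ forces all powers $x^m$ to be nonzero (the paper phrases this via a persistent nonzero diagonal entry, you via ``nilpotent $\iff$ in $J(B)$'' for the incidence algebra $B=eI(X)e$, which is the same fact), and then on the linear independence of homogeneous elements of pairwise distinct degrees contradicting $\dim B<\infty$. You are also right that Lemma \ref{idem_elem} and the minimality of $e$ play no role here; the paper does not use them in this proof either.
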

\begin{proof}
	Note that, if $x\notin J(B)$, then $x$ has at least one non-zero entry in the diagonal. So, $x^m\notin J(B)$ for all $m\in\mathbb{N}$. In particular, $x,x^2,x^3,\ldots$ are non-zero elements. If $\deg x$ has infinite order, then the elements $x,x^2,\ldots$ have all different degree, thus they are linearly independent. However, $B$ has finite dimension, so we obtain a contradiction. As a conclusion, $\deg x$ has finite order.
\end{proof}

Recall, from Lemma \ref{radical_graded}, that $J(B)$ is graded. Thus $B/J(B)$ has a natural $G$-grading, induced from the grading on $B$.
\begin{Lemma}\label{mainlemma}
	\begin{enumerate}
		\renewcommand{\labelenumi}{(\roman{enumi})}
		\item The algebra $B/J(B)$ is a commutative graded division algebra.
		\item Write $B/J(B)=\bigoplus_{g\in G}\bar B_g$. Then $\dim\bar B_g\le1$, for all $g\in G$.
	\end{enumerate}
\end{Lemma}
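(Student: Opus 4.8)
The plan is to exploit the minimality of $e$ together with the explicit description of $B=eI(X)e$ as an incidence algebra. Writing $e=\sum_{i\in S}e_{ii}$ for the diagonal idempotent $e$ (diagonal by Lemma \ref{ref_here}), the corner $B=eI(X)e$ is precisely the incidence algebra of the subposet induced on $S$. Consequently $\bar B:=B/J(B)$ is its diagonal part, so $\bar B\cong\mathbb{F}^{|S|}$ as an ordinary algebra; in particular $\bar B$ is commutative and reduced, which already settles the commutativity asserted in (i).

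The key reduction is that $e$ is the only nonzero homogeneous idempotent of $B$. Indeed, if $f\in\mathscr{E}(X)$ lies in $B$, then $ef=f$ (as $e$ is the unit of $B$) gives $fI(X)=e(fI(X))\subseteq eI(X)$, so $f\preceq e$; minimality forces $fI(X)=eI(X)$, whence $e=fy$ for some $y$ and $fe=f(fy)=fy=e$. But $fe=f$ as well, so $f=e$. From this I would deduce $\bar B_1=\mathbb{F}\bar e$: the ideal $B_1\cap J(B)$ is nilpotent, so idempotents of $\bar B_1$ lift to homogeneous idempotents of $B$ of degree $1$, which can only be $0$ or $e$; thus $\bar B_1$ has no nontrivial idempotents, and since it is a unital subalgebra of $\mathbb{F}^{|S|}$ (hence a product of copies of $\mathbb{F}$, all of whose idempotents are rational), it must equal $\mathbb{F}\bar e$.

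With $\bar B_1=\mathbb{F}\bar e$ in hand, both remaining assertions follow quickly. Given a nonzero homogeneous $\bar x\in\bar B_g$, lift it to a homogeneous $x\in B_g\setminus J(B)$. By Lemma \ref{finite_order}, $g=\deg x$ has finite order $q$, and by the observation in the proof of that lemma every power $x^m$ keeps a nonzero diagonal entry, hence stays outside $J(B)$; therefore $\bar x^q\in\bar B_1=\mathbb{F}\bar e$ is a nonzero multiple $\lambda\bar e$ of the unit, so $\bar x$ is invertible with inverse $\lambda^{-1}\bar x^{q-1}$, proving that $\bar B$ is a graded division algebra and completing (i). For (ii), if $\bar x,\bar y\in\bar B_g$ are both nonzero, then $\bar x\bar y^{-1}\in\bar B_{1}=\mathbb{F}\bar e$, so $\bar x$ and $\bar y$ are proportional and $\dim\bar B_g\le1$.

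I expect the main obstacle to be the intermediate step $\bar B_1=\mathbb{F}\bar e$, which is precisely where minimality of $e$ is used: it combines the lifting of idempotents modulo the nilpotent ideal $B_1\cap J(B)$ with the fact that the idempotents of a subalgebra of $\mathbb{F}^{|S|}$ are rational, so that having no nontrivial idempotents forces dimension one. Once that is secured, the graded-division property and the one-dimensionality of the homogeneous components are formal consequences of $\deg x$ having finite order.
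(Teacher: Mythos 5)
Your proof is correct, and its overall skeleton coincides with the paper's: both arguments reduce everything to showing $\dim\bar B_1=1$ via the minimality of $e$, and then use Lemma \ref{finite_order} (together with the observation that powers of an element outside $J(B)$ retain a nonzero diagonal entry and so stay outside $J(B)$) to place a power of any nonzero homogeneous $\bar x$ in $\bar B_1=\mathbb{F}\bar e$, yielding invertibility and, by comparing two elements of the same degree, $\dim\bar B_g\le1$. Where you genuinely diverge is in the key step $\dim\bar B_1=1$. The paper works concretely inside $B$: it takes a degree-one element independent of $e$ modulo $J(B)$, writes its diagonal part as $\sum_i\alpha_ie_i$ with distinct nonzero $\alpha_i$ and $s>1$, extracts $e_1+b'$ from the powers $x,\ldots,x^s$ by a Vandermonde argument, and then applies Lemma \ref{idem_elem} to manufacture a homogeneous idempotent strictly below $e$, contradicting minimality. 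You instead first establish the cleaner intermediate statement that $e$ is the \emph{only} nonzero homogeneous idempotent of $B$ (your argument via the preorder $\preceq$ and the identities $fe=f$ and $fe=e$ is correct), and then obtain $\bar B_1=\mathbb{F}\bar e$ by lifting idempotents of $\bar B_1$ through the nilpotent ideal $B_1\cap J(B)$ and invoking the fact that a unital subalgebra of $\mathbb{F}^{|S|}$ is again a product of copies of $\mathbb{F}$ (every element has a split, separable minimal polynomial), so the absence of nontrivial idempotents forces dimension one. Your route is more conceptual, replaces the Vandermonde computation by standard idempotent lifting, and isolates the reusable fact that $e$ is the unique homogeneous idempotent of $B$; the paper's route is more computational but self-contained, with Lemma \ref{idem_elem} serving as its explicit lifting device. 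Both are valid proofs of the statement.
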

\begin{proof}
	If $\dim\bar B_1$ is not $1$, then we claim that there would exists an idempotent $e'\in\bar B_1$ such that $e'\preceq e$. Indeed, let $\bar x\in\bar B_1$ be a non-zero homogeneous element, not a scalar multiple of $e$, modulo $J(B)$.
	Then there exists a homogeneous $x\in B$ of degree $1$, having non-zero entries in the diagonal. We can write $x=\alpha_1e_1+\alpha_2e_2+\cdots+\alpha_se_s+b$, where $b\in J(B)$, $e_1,e_2,\ldots,e_s$ are diagonals orthogonal idempotents (sum of diagonal matrix units), and $\alpha_1,\ldots,\alpha_s\in \mathbb{F}$ are non-zero elements such that $\alpha_i\ne\alpha_j$ for all $i\ne j$. Since $x$ has non-zero entry in the diagonal, $s>0$; and since $x$ and $e$ are linearly independent modulo $J(B)$, we have necessarilly $s>1$. The powers $x,x^2,\ldots,x^s$ are homogeneous elements of degree 1. By Vandermonde argument, we conclude that $y=e_1+b'$ is a linear combination of $x,x^2,\ldots,x^s$, for some $b'\in J(B)$, hence $y$ is homogeneous of degree 1. Using Lemma \ref{idem_elem}, we obtain a homogeneous idempotent $e'$, which is a linear combination of $y,y^2,\ldots$. Thus $e'\in B$. Moreover, we have $e'I(X)\subsetneq eI(X)$. But $e$ is minimal, so we obtain a contradiction. As a conclusion, $\dim\bar B_1=1$.

	Now, let $\bar x\in B/J(B)$ be non-zero and homogeneous. Thus, $\bar x$ has non-zero entry in the diagonal. Hence, $\bar x^m\ne0$, for all $m\in\mathbb{N}$. By Lemma \ref{finite_order}, $\bar x^m\in\bar B_1$, for some $m\in\mathbb{N}$. This means $\bar x^m=\lambda\bar e$, for some $\lambda\ne0$. As a consequence, $\bar x$ is invertible. Thus, $B/J(B)$ is a graded division algebra. Moreover, since $B/J(B)$ is a sum of copies of $\mathbb{F}$, as an ordinary algebra, then it is necessarily commutative, proving (i). Since $\dim\bar B_1=1$ and $B/J(B)$ is a graded division, then $\dim\bar B_g\le1$, for all $g\in G$, thus proving (ii).
\end{proof}

\begin{Cor}\label{gradedalgebra}
	The algebra $B$ is semisimple as an ordinary algebra, that is $J(B)=0$. In particular, $B$ is a commutative graded division algebra with unit $e$.
\end{Cor}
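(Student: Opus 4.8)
The plan is to reduce $J(B)=0$ to a combinatorial statement about the subposet carried by $e$, and then force that statement using a transitive group of poset automorphisms. Since $e$ is diagonal, write $e=\sum_{i\in S}e_{ii}$ for some $S\subseteq\{1,\dots,n\}$; then $B=eI(X)e$ is itself an incidence algebra, $B\cong I(S)$ with the order induced from $X$, its radical $J(B)$ is exactly the strictly-upper part $\mathrm{Span}\{e_{ij}:i<_Xj,\ i,j\in S\}$, and it is graded by Lemma \ref{radical_graded}. Hence $J(B)=0$ if and only if $S$ is an antichain, and this is the statement I will establish. By Lemma \ref{mainlemma}, $B/J(B)$ is a commutative graded division algebra with all homogeneous components at most one-dimensional; let $T\subseteq G$ be its support. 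By Lemma \ref{finite_order} each element of $T$ has finite order, and a commutativity computation on homogeneous units shows $T$ is a finite abelian group with $|T|=\dim(B/J(B))=|S|=:r$. As $B/J(B)\cong\mathbb{F}^r$ as an ordinary algebra and $\cong\mathbb{F}T$ as a graded algebra, property (iii) of Lemma \ref{equiv_group_algebra} holds for $T$, and, by Lemma \ref{field}, the primitive idempotents of $B/J(B)$ are indexed by $\hat T$, which acts on them by (simply transitive) translation.

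The group I will act by consists of the \emph{diagonal automorphisms}: for each character $\psi\in\hat G$ define $\Phi_\psi\colon B\to B$ by $\Phi_\psi\big(\sum_g x_g\big)=\sum_g\psi(g)x_g$. Since the grading is an algebra grading and $\psi$ is multiplicative, each $\Phi_\psi$ is a graded algebra automorphism of $B$; it preserves $J(B)$ and induces on $B/J(B)=\mathbb{F}T$ the graded automorphism scaling the degree-$g$ line by $\psi(g)$. On the primitive idempotents $\{\bar e_{ii}\}_{i\in S}$ this is translation by $\psi|_T\in\hat T$, so each $\Phi_\psi$ induces a permutation $\pi_\psi$ of $S$. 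I will need that $\{\pi_\psi:\psi\in\hat G\}$ acts \emph{transitively} on $S$; equivalently, that the restriction $\hat G\to\hat T$ is onto, so that $\psi|_T$ ranges over all of $\hat T$ and the translation action becomes transitive. This is where the hypotheses enter: when $G$ is abelian and $\mathbb{F}$ contains the required roots of unity, every character of $T$ extends to a character $G\to\mathbb{F}^\times$, giving the needed surjectivity.

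Next I will verify that each $\pi_\psi$ is an \emph{automorphism of the poset} $(S,\le_X)$, which is the technical heart. The order is read off from dimensions of corners: $\dim e_{ii}Be_{jj}=1$ if $i\le_Xj$ and $0$ otherwise. Writing $\sigma=\Phi_\psi$, the idempotent $\sigma(e_{ii})$ has the same image modulo $J(B)$ as the diagonal idempotent $e_{\pi_\psi(i)}$; two idempotents congruent modulo the nil radical generate isomorphic projective right modules, so, using $pBq\cong\mathrm{Hom}_B(qB,pB)$, one obtains $\dim\sigma(e_{ii})B\sigma(e_{jj})=\dim e_{\pi_\psi(i)}Be_{\pi_\psi(j)}$. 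On the other hand $\sigma$ is an automorphism, so $\dim\sigma(e_{ii})B\sigma(e_{jj})=\dim e_{ii}Be_{jj}$. Comparing, $i\le_Xj$ if and only if $\pi_\psi(i)\le_X\pi_\psi(j)$, as claimed.

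Finally I will conclude by a short combinatorial lemma: a finite poset admitting a group of automorphisms acting transitively on its points must be an antichain, since a maximal element is carried onto every point (so all points are maximal), likewise all points are minimal, and hence no strict relation can occur. Therefore $S$ is an antichain, $J(B)=0$, and $B=B/J(B)$ is a commutative graded division algebra with unit $e$, as asserted. The main obstacle is the transitivity step of the second paragraph — producing enough graded automorphisms, equivalently extending characters of $T$ to $G$ — which is immediate when $G$ is abelian with $\mathbb{F}$ suitably rich but needs separate care in the remaining cases covered by the characteristic hypotheses; the module-theoretic dimension count making each $\pi_\psi$ order-preserving is the other point that must be handled with care.
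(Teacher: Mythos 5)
Your reduction of $J(B)=0$ to ``$\mathscr{D}(e)$ is an antichain'' is correct, the final combinatorial step (a finite poset with a transitive automorphism group is an antichain) is fine, and the dimension count showing each $\pi_\psi$ preserves the order works. The proof breaks, however, exactly at the step you flag as needing ``separate care'': the transitivity of the action, i.e.\ the surjectivity of the restriction $\hat G\to\hat T$. This is not a patchable technicality under the paper's hypotheses. First, the corollary must hold for arbitrary, possibly non-abelian, $G$ when $\mathrm{char}\,\mathbb{F}=0$ or $\mathrm{char}\,\mathbb{F}>\dim I(X)$; but every $\psi\in\hat G=\mathrm{Hom}(G,\mathbb{F}^\times)$ factors through $G/[G,G]$, so for instance with $G=S_3$ and $T=A_3$ (a configuration of exactly the kind appearing in Section \ref{S_Examples}) the restriction $\hat G\to\hat T$ is trivial and your automorphisms $\Phi_\psi$ do not move the idempotents at all. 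Second, even for $G$ abelian the hypotheses only guarantee roots of unity of order $\exp T$, which is not enough to extend characters of $T$ to $G$: for $\mathbb{F}=\mathbb{Q}$, $G=\mathbb{Z}_4$ and $T=2\mathbb{Z}_4$, every character of $G$ restricts trivially to $T$, so the nontrivial character of $T$ does not extend and $\{\pi_\psi\}$ is not transitive. (A smaller, repairable point: at this stage one may only assert $B/J(B)\cong\mathbb{F}^\sigma T$, a twisted group algebra; untwisting to $\mathbb{F}T$ uses Proposition \ref{kochetov} and \cite[Corollary 1.2]{karpilovsky}, which come later in the paper. Since $\hat T$ still acts simply transitively on the primitive idempotents of a split twisted group algebra, this alone would not be fatal.)

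For comparison, the paper's proof avoids characters entirely and needs no hypotheses beyond Lemma \ref{mainlemma}: $J/J^2$ is a graded right module over the graded division algebra $\bar B=B/J$; its right annihilator in $\bar B$ is a graded subspace, and it is nonzero because $\bar e_{jj}$ lies in it for any $j$ minimal in $\mathscr{D}(e)$; hence it contains a nonzero homogeneous, therefore invertible, element of $\bar B$, which forces $J/J^2=0$ and so $J=0$. If you wish to keep your automorphism picture, the transitive group would have to be manufactured from the graded division structure of $\bar B$ itself rather than from characters of $G$; as written, the key transitivity step is missing and cannot be supplied in the stated generality.
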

\begin{proof}
	Let $J=J(B)$ and $\bar B=B/J$. Note that $J/J^2$ is a graded $\bar B$-module. The right annihilator $\mathrm{Ann}^r_{\bar B}(J/J^2)$ is a homogeneous subspace of $\bar B$. However, no nonzero homogeneous element of $\bar B$ annihilates a nonzero homogeneous element of $J/J^2$, since $\bar B$ is a graded division algebra, by previous lemma. Hence, $J(B)=0$. Since, by previous lemma, $B/J(B)$ is a commutative graded division algebra, then $B$ is a commutative graded division algebra as well.
\end{proof}

From now on, $e$ will always denote a minimal idempotent as before. Denote by $\mathscr{D}(e)=\{i\in X\mid e(i,i)\ne0\}$.
\begin{Cor}\label{noncomparable}
	The elements of $\mathscr{D}(e)$ are not pairwise comparable.
\end{Cor}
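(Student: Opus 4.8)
The plan is to argue by contradiction, exploiting Corollary \ref{gradedalgebra}, which tells us that $B=eI(X)e$ is semisimple as an ordinary algebra. Hence $B$ is a direct product of copies of $\mathbb{F}$, and in particular it is commutative and contains no nonzero nilpotent elements. These two features are exactly what rule out comparable diagonal positions inside $\mathscr{D}(e)$.

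First I would pin down $B$ concretely. Since $e$ is a diagonal idempotent with $e(i,i)\ne0$ precisely for $i\in\mathscr{D}(e)$, we have $e=\sum_{i\in\mathscr{D}(e)}e_{ii}$, and a direct computation gives $e\,e_{ij}\,e=e_{ij}$ whenever $i,j\in\mathscr{D}(e)$ (recall that $e_{ij}\in I(X)$ forces $i\le_X j$). Consequently $B$ is spanned by the matrix units $e_{ij}$ with $i,j\in\mathscr{D}(e)$ and $i\le_X j$; in particular, every such $e_{ij}$ belongs to $B$.

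Now suppose, toward a contradiction, that two distinct elements $i,j\in\mathscr{D}(e)$ are comparable, say $i<_X j$. Then $e_{ij}\in I(X)$, and by the previous step $e_{ij}=e\,e_{ij}\,e\in B$. Since $i\ne j$, we have $e_{ij}^2=0$, so $e_{ij}$ is a nonzero nilpotent element of $B$, contradicting $B\cong\mathbb{F}^m$. Alternatively, one may contradict the commutativity of $B$ directly, observing that $e_{ii}\in B$ and $e_{ii}e_{ij}=e_{ij}\ne0=e_{ij}e_{ii}$. Either way, no two distinct elements of $\mathscr{D}(e)$ can be comparable, which is the assertion.

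I expect no genuine obstacle here once Corollary \ref{gradedalgebra} is in hand; the only point that deserves care is the explicit identification of $B=eI(X)e$ with the ``corner'' indexed by $\mathscr{D}(e)$, so that one can be certain the offending matrix unit $e_{ij}$ actually lies in $B$ rather than merely in the ambient $I(X)$.
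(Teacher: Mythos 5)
Your argument is correct and essentially the same as the paper's: both derive a contradiction from Corollary \ref{gradedalgebra} by observing that if $i<j$ in $\mathscr{D}(e)$ then $e_{ij}$ lies in $B=eI(X)e$ and is forced into the radical --- the paper phrases this as $e_{ij}\in J(B)\ne0$ via the identification $B\cong I(Y)$ with $Y=\mathscr{D}(e)$, while you phrase it as a nonzero nilpotent in a commutative semisimple algebra. The care you take in identifying $B$ with the corner indexed by $\mathscr{D}(e)$ is exactly the paper's remark that $B\cong I(Y)$.
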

\begin{proof}
	If not, suppose $i<j$, with $i,j\in\mathscr{D}(e)$. Recall that $B=eI(X)e\cong I(Y)$, for some poset $Y$ (where $Y=\mathscr{D}(e)$). Then $e_{ij}\in B$. Moreover, one has $e_{ij}\in J(B)$, so $J(B)\ne0$, contrary to Corollary \ref{gradedalgebra}.
\end{proof}

Now, if $e\ne1$, consider the set
$$
\mathscr{E}_1(X)=\{\text{$e'\in I(X)$ idempotent $\mid$ $e'I(X)\subset(1-e)I(X)$}\}.
$$
We remark that $\mathscr{E}_1(X)\ne\emptyset$, since $1-e\in\mathscr{E}_1(X)$. We claim that we can choose the idempotents belonging to $\mathscr{E}_1(X)$ so that they commute with $e$. Indeed, if $e'\in\mathscr{E}_1(X)$, then $ee'=ee^{\prime2}\in e(1-e)I(X)=0$. Also, let $f_1=e'-e'e$. Then $f_1^2=f_1$, $f_1e=ef_1=0$ and $f_1I(X)=e'I(X)$. Thus, $f_1\in\mathscr{E}_1(X)$ is the required element. Let $e_1=e$, and chose a homogeneous idempotent $e_2\in\mathscr{E}_1(X)$ which is minimal with respect to $\preceq$, and orthogonal to $e_1$.

Proceeding by induction, we assume that we have a set $e_1,\ldots,e_r$ of minimal homogeneous orthogonal idempontents. Then we define
$$
\mathscr{E}_r(X)=\{e'\in I(X)\text{ idempotent }\mid e'I(X)\subseteq(1-e_1\cdots-e_r)I(X)\}.
$$
As before, we can find a minimal homogeneous idempotent $e_{r+1}\in\mathscr{E}_r(X)$ which is orthogonal to $e_1,\ldots,e_r$. We can continue the process, and, since $\dim I(X)$ is finite, it ends within finitely many steps. Let $e_1,e_2,\ldots,e_t$ be the homogeneous idempotents, which are minimal with respect to $\preceq$, obtained by the previous construction. By Lemma \ref{simultaneously_diag}, we can assume $e_1,e_2,\ldots,e_t$ diagonal.
\begin{Def}\label{defassociatedposet}
The diagonal homogeneous idempotents $e_1,\ldots,e_t$ will be called the \emph{minimal idempotents}. We denote $\mathcal{E}=\{e_1,\ldots,e_t\}$.
\end{Def}

We denote $D_i=e_iI(X)e_i$ which is, according to Corollary \ref{gradedalgebra}, a commutative graded division algebra. Moreover, $M_{ij}=e_iI(X)e_j$ is a graded $(D_i,D_j)$-bimodule. The triple $(D_i,D_j,M_{ij})$ constitutes a triangular algebra.

If $e_i$ and $e_j$ are two minimal idempotents, we denote $e_i\trianglelefteq e_j$ if there exist $x\in\mathscr{D}(e_i)$ and $y\in\mathscr{D}(e_j)$ such that $x\le y$.
\begin{Lemma}\label{ordering}
The relation $\trianglelefteq$ is a partial order in $\mathcal{E}$.
\end{Lemma}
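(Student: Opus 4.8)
The plan is to verify the three axioms of a partial order, with essentially all of the difficulty concentrated in a single auxiliary fact about the bimodules $M_{pq}=e_pI(X)e_q$. Reflexivity is immediate: since $e_i\neq0$ the set $\mathscr{D}(e_i)$ is nonempty, and any $a\in\mathscr{D}(e_i)$ satisfies $a\le_X a$, so $e_i\trianglelefteq e_i$.

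Before treating antisymmetry and transitivity, I would record the translation $e_p\trianglelefteq e_q\iff M_{pq}\neq0$: indeed $M_{pq}$ is spanned by the matrix units $e_{ab}$ with $a\in\mathscr{D}(e_p)$, $b\in\mathscr{D}(e_q)$ and $a\le_X b$, so it is nonzero exactly when such a comparable pair exists. The naive attempt to prove transitivity and antisymmetry directly from this combinatorial description fails: $e_i\trianglelefteq e_j$ only yields \emph{some} pair $a\le_X b$, while $e_j\trianglelefteq e_k$ yields \emph{some} pair $b'\le_X c$, and when $b\neq b'$ (both lying in the antichain $\mathscr{D}(e_j)$, by Corollary \ref{noncomparable}) there is no chain $a\le_X b\le_X c$ to exploit. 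This gap is the main obstacle, and bridging it is exactly where the graded-division structure of the diagonal blocks must enter.

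The key lemma I would isolate is: \emph{if $M_{pq}\neq0$, then every $b\in\mathscr{D}(e_q)$ admits some $a\in\mathscr{D}(e_p)$ with $a\le_X b$, and every $a\in\mathscr{D}(e_p)$ admits some $b\in\mathscr{D}(e_q)$ with $a\le_X b$.} To prove it, note that $D_q=e_qI(X)e_q\cong I(\mathscr{D}(e_q))$ is the algebra of diagonal matrices supported on $\mathscr{D}(e_q)$ (as $\mathscr{D}(e_q)$ is an antichain), so each $e_{bb}$ with $b\in\mathscr{D}(e_q)$ is a nonzero idempotent of $D_q$. I would pick a nonzero homogeneous $m\in M_{pq}$ and consider the right-multiplication map $D_q\to M_{pq}$, $u\mapsto mu$. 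Its kernel is a graded right ideal of $D_q$ not containing $1=e_q$ (since $me_q=m\neq0$); but $D_q$ is a \emph{graded division algebra} by Corollary \ref{gradedalgebra}, so it has no proper nonzero graded right ideal, and the map is injective. Hence $me_{bb}\neq0$ for every $b\in\mathscr{D}(e_q)$; since $me_{bb}=\sum_{a\le_X b}m(a,b)e_{ab}$ is the $b$-th column of $m$, some entry $m(a,b)\neq0$ gives $a\le_X b$. This proves the first assertion, and the second follows symmetrically from the left $D_p$-action.

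With this lemma the remaining axioms are short. For \textbf{transitivity}, assume $e_i\trianglelefteq e_j$ and $e_j\trianglelefteq e_k$, so $M_{ij}\neq0$ and $M_{jk}\neq0$; fix any $b\in\mathscr{D}(e_j)$, obtain $a\in\mathscr{D}(e_i)$ with $a\le_X b$ from the lemma applied to $M_{ij}$, and $c\in\mathscr{D}(e_k)$ with $b\le_X c$ from the lemma applied to $M_{jk}$; transitivity of $\le_X$ yields $a\le_X c$, i.e.\ $e_i\trianglelefteq e_k$. For \textbf{antisymmetry}, assume $e_i\trianglelefteq e_j$ and $e_j\trianglelefteq e_i$ with $i\neq j$; choosing $a\in\mathscr{D}(e_i)$, the lemma (applied to $M_{ij}$ and then to $M_{ji}$) produces $b\in\mathscr{D}(e_j)$ and $a'\in\mathscr{D}(e_i)$ with $a\le_X b\le_X a'$, whence $a\le_X a'$; as $\mathscr{D}(e_i)$ is an antichain this forces $a=a'$, and then $a\le_X b\le_X a$ forces $a=b$, contradicting $\mathscr{D}(e_i)\cap\mathscr{D}(e_j)=\emptyset$ (the idempotents $e_i,e_j$ are orthogonal). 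Therefore $e_i=e_j$, completing the proof.
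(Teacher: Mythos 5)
Your proof is correct, and it reaches the crucial auxiliary fact by a different (and more economical) route than the paper. The paper's strategy is the same in outline: it first proves Lemma \ref{link}, which shows via a dimension count over the graded division algebra $D_i$ (choosing a $D_i$-basis $v_1,\dots,v_\ell$ of $M_{ij}$ and analysing the supports of elements of each $D_iv_k$) that $\ell(x,e_j)=\dim_{D_i}M_{ij}$ is the \emph{same} for every $x\in\mathscr{D}(e_i)$; in particular $M_{ij}\neq0$ forces every element of $\mathscr{D}(e_i)$ to be linked to $\mathscr{D}(e_j)$ and, symmetrically, every element of $\mathscr{D}(e_j)$ to be linked to $\mathscr{D}(e_i)$. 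Antisymmetry then follows exactly as in your last paragraph, by producing $x_1\le_X y\le_X x_2$ inside the antichain $\mathscr{D}(e_i)$ and invoking Corollary \ref{noncomparable}. Your replacement for Lemma \ref{link} --- fixing a nonzero homogeneous $m\in M_{pq}$ and observing that $u\mapsto mu$ has graded kernel, hence zero kernel since the graded division algebra $D_q$ has no proper nonzero graded one-sided ideals, so that $me_{bb}\neq0$ for every $b$ --- proves precisely the weaker statement needed here and avoids the counting argument entirely; what it does not give you is the exact value $\ell(x,e_j)=\dim_{D_i}M_{ij}$, which the paper uses again later (e.g.\ in bounding $\dim M_{ij}$ in Corollary \ref{firstpartthm}). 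A further small point in your favour: the paper dismisses transitivity as ``clear,'' whereas it genuinely requires the full-linkage property to splice the two chains at a common element of $\mathscr{D}(e_j)$; you identify and close that gap explicitly.
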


Before we prove Lemma \ref{ordering}, we need to investigate the relation between each $M_{ij}$ and the poset.

Recall that the structure of graded modules over graded division algebras is similar to the structure of vector spaces over division rings (see, for instance, \cite[page 29]{EldKoc}). More precisely, let $D$ be a $G$-graded division algebra and let $M$ be a graded right $D$-module. Then $M\cong\bigoplus_{\alpha}M_\alpha$, where each $M_\alpha=\,^{[h_\alpha]}\!D$, for some $h_\alpha\in G$. Here, if $D=\bigoplus_{g\in G}D_g$, then
$$
	^{[h]}\!D=\bigoplus_{g\in G}\,^{[h]}\!D_g,\quad\text{ where $^{[h]}\!D_g=D_{h^{-1}g}$.}
$$
In particular, every graded right $D$-module is free. A similar discussion is valid for graded left $D$-modules.

Let $e_i$ and $e_j$ be minimal idempotents. For each $x\in\mathscr{D}(e_i)$, we define the \emph{link} of $x$ to $e_j$ as the non-negative integer $\ell(x,e_j)=|\{y\in\mathscr{D}(e_j)\mid x\le y\}|$. Similarly, we define the link of $e_i$ to $y$, denoted $\ell(e_i,y)$. The link of $e_i$ to $e_j$ is $\ell(e_i,e_j)=|\{(x,y)\mid x\le y,x\in\mathscr{D}(e_i),y\in\mathscr{D}(e_j)\}|$.
\begin{Lemma}\label{link}
Let $e_i,e_j$ be minimal idempotents. Then, for any $x\in\mathscr{D}(e_i)$, $y\in\mathscr{D}(e_j)$,
\[
\ell(e_i,e_j)=|\mathscr{D}(e_i)|\ell(x,e_j)=\ell(e_i,y)|\mathscr{D}(e_j)|.
\]
\end{Lemma}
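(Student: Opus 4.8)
The plan is to show that each link appearing in the statement equals the rank of $M_{ij}$ as a free module over the appropriate graded division algebra, and then to read off the three equalities. First I would record what the objects are as matrices. Since $e_i$ is a diagonal idempotent, it is a $0$--$1$ matrix, so $e_i=\sum_{x\in\mathscr{D}(e_i)}e_{xx}$, and likewise for $e_j$. Consequently $M_{ij}=e_iI(X)e_j$ is spanned by exactly the matrix units $e_{xy}$ with $x\in\mathscr{D}(e_i)$, $y\in\mathscr{D}(e_j)$ and $x\le y$; these are linearly independent, so
\[
\dim_{\mathbb{F}}M_{ij}=\ell(e_i,e_j).
\]
Thus the statement reduces to computing $\dim M_{ij}$ in two ways and checking that the links $\ell(e_i,y)$ and $\ell(x,e_j)$ do not depend on the chosen $y$ or $x$.

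The key structural observation is that $D_j=e_jI(X)e_j$ is the full algebra of diagonal matrices supported on $\mathscr{D}(e_j)$. Indeed, by Corollary \ref{noncomparable} the set $\mathscr{D}(e_j)$ is an antichain, so $e_jI(X)e_j$ contains only diagonal matrices; and by Corollary \ref{gradedalgebra} it has dimension $|\mathscr{D}(e_j)|$, forcing $D_j=\{\sum_{y\in\mathscr{D}(e_j)}\lambda_ye_{yy}\mid\lambda_y\in\mathbb{F}\}$. In particular each $e_{yy}$, $y\in\mathscr{D}(e_j)$, is a primitive idempotent of $D_j$ with $D_je_{yy}=\mathbb{F}e_{yy}$. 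The same remarks apply to $D_i$ and the $e_{xx}$, $x\in\mathscr{D}(e_i)$.

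Now I would invoke the structure theory of graded modules over graded division algebras recalled above: $M_{ij}$ is free as a graded right $D_j$-module, of some rank $r$; forgetting the grading, $M_{ij}\cong D_j^{\,r}$ as right $D_j$-modules. Multiplying on the right by the primitive idempotent $e_{yy}$ gives, on one hand, $M_{ij}e_{yy}\cong(D_je_{yy})^r=(\mathbb{F}e_{yy})^r$, of dimension $r$; and on the other hand $M_{ij}e_{yy}=\mathrm{Span}\{e_{xy}\mid x\in\mathscr{D}(e_i),\ x\le y\}$, of dimension $\ell(e_i,y)$. Hence $\ell(e_i,y)=r$ for every $y\in\mathscr{D}(e_j)$, and therefore
\[
\ell(e_i,e_j)=\dim_{\mathbb{F}}M_{ij}=r\,|\mathscr{D}(e_j)|=\ell(e_i,y)\,|\mathscr{D}(e_j)|.
\]

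Running the symmetric argument with the free left $D_i$-module structure and left multiplication by the $e_{xx}$ yields $\ell(x,e_j)=r'$ (the left rank) for every $x\in\mathscr{D}(e_i)$, and $\ell(e_i,e_j)=|\mathscr{D}(e_i)|\,\ell(x,e_j)$, which together with the above proves the lemma. I expect the only non-formal point to be the passage from the abstract freeness of $M_{ij}$ to the concrete equality $\ell(e_i,y)=r$: this is exactly where one uses that $D_j$ is honestly the diagonal algebra, so that the idempotents $e_{yy}$ are primitive and cut $M_{ij}$ into its ``columns''. Everything else is bookkeeping with matrix units and dimension counts.
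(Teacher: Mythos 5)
Your proof is correct, and it shares the paper's overall strategy (relate the links to the rank of $M_{ij}$ as a free graded module over a graded division algebra, then count $\mathbb{F}$-dimensions), but the decisive technical step is carried out differently. The paper works only with the left $D_i$-module structure: it takes a $D_i$-basis $v_1,\ldots,v_\ell$ of $M_{ij}$, proves a claim about how elements of $D_iv_k$ can vanish on the ``rows'' indexed by $\mathscr{D}(e_i)$, deduces the inequality $\ell\ge\ell(x,e_j)$ for every $x$, and then squeezes $\sum_{x}\ell(x,e_j)\le|\mathscr{D}(e_i)|\,\ell$ against the dimension identity $\ell(e_i,e_j)=\dim_\mathbb{F}D_i\cdot\dim_{D_i}M_{ij}$ to force equality (the second displayed equality then follows by symmetry). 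You instead observe that $D_j$ is honestly the diagonal algebra on $\mathscr{D}(e_j)$ (by Corollary \ref{noncomparable}), so the $e_{yy}$ are primitive idempotents, and cutting the ungraded free module $M_{ij}\cong D_j^{\,r}$ by right multiplication with $e_{yy}$ computes $\dim M_{ij}e_{yy}=r$ on the nose, while $M_{ij}e_{yy}$ is visibly the span of the $e_{xy}$ with $x\le y$; this gives $\ell(e_i,y)=r$ directly, with no inequality or Vandermonde-style claim needed. Your version is cleaner and more transparent at that point; the paper's version has the minor merit of not needing to identify $D_j$ explicitly as the diagonal algebra, working instead with an arbitrary homogeneous basis of the free module. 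Both are complete proofs; note only that, like the paper, you need the symmetric left-module argument to get the second equality, which you correctly include.
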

\begin{proof}
Let $M_{ij}=e_iI(X)e_j$, which is a free $G$-graded left $D_i$-module.
We shall prove that $\ell(x,e_j)=\dim_{D_i}M_{ij}$, for any $x\in\mathscr{D}(e_i)$. 

On one hand, as $\mathbb{F}$-linear space, we have \[M_{ij}=\bigoplus_{(x,y)\in\mathscr{D}(e_i)\times\mathscr{D}(e_j);x\leq y}\mathbb{F}e_{xy}.\] Thus, $\ell(e_i,e_j)=\dim_\mathbb{F} M_{ij}=\dim_\mathbb{F} D_i\dim_{D_i}M_{ij}$. On the other hand, let $v_1,\ldots,v_\ell$ be a $D_i$-basis of $M_{ij}$, where $\ell>0$ (the case $\ell=0$ is trivial).

\noindent\textbf{Claim.} For $k\in\{1,\ldots,\ell\}$, let $w\in D_iv_k$, and asume $w(x,y)=0$, but $v_k(x,y)\ne0$. Then $w(x,y')=0$, for all $y'\in\mathscr{D}(e_j)$.

Indeed, in this case, $B=\{e_{xx}v_k\mid x\in\mathscr{D}(e_i)\}$ is a set of $|\mathscr{D}(e_i)|$ $\mathbb{F}$-linearly independent elements. Since $\dim_\mathbb{F} D_iv_k=\dim_\mathbb{F}D_i=|\mathscr{D}(e_i)|$, $B$ is a (non-homogeneous) $\mathbb{F}$-vector space basis of $D_iv_k$. So, write $w=\sum_{x\in\mathscr{D}(e_i)}\lambda_xe_{xx}v_k$. If $v_k(x,y)\ne0$, but $w(x,y)=0$, then necessarily $\lambda_x=0$. This implies $w(x,y')=0$, for all $y'\in\mathscr{D}(e_j)$.

As a conclusion, since $v_1,\ldots,v_\ell$ generate $M_{ij}$ as a $D_i$-linear space, every element of $M_{ij}$ is a linear combination of elements in $D_iv_1,\ldots,D_iv_\ell$. By the claim, in order to be possible to fill all the entries of the coordinates of type $(x,\cdot)$ in $M_{ij}$, it is necessary that $\ell\ge\ell(x,e_j)$. Hence, $\ell\ge\max\{\ell(x,e_j)\mid x\in\mathscr{D}(e_i)\}$. And, therefore,
\[\dim_\mathbb{F} D_i\dim_{D_i}M_{ij}=\ell(e_i,e_j)=\sum_{x\in\mathscr{D}(e_i)}\ell(x,e_j)\leq|\mathscr{D}(e_i)|\ell=\dim_\mathbb{F} D_i\dim_{D_i}M_{ij}\] which implies $\ell(x,e_j)=\dim_{D_i}M_{ij}$, for any $x\in\mathscr{D}(e_i)$ and we are done.
\end{proof}

Now, we can prove the lemma:
\begin{proof}[Proof of Lemma \ref{ordering}]
It is clear that $\trianglelefteq$ is reflexive and transitive. So, let $e_i\ne e_j$ (thus, by construction, $\mathscr{D}(e_i)\cap\mathscr{D}(e_j)=\emptyset$), and assume that $e_i\trianglelefteq e_j$, and $e_j\trianglelefteq e_i$. Then, by Lemma \ref{link}, every $x\in\mathscr{D}(e_i)$ is $\le$ to at least one $y\in\mathscr{D}(e_j)$. In the same way, every $y\in\mathscr{D}(e_j)$ is $\le$ to at least one $x\in\mathscr{D}(e_i)$. Hence, we can find $x_1,x_2\in\mathscr{D}(e_i)$, and $y\in\mathscr{D}(e_j)$ such that $x_1<y<x_2$, so $x_1<x_2$. This contradicts Corollary \ref{noncomparable}. Hence, $\trianglelefteq$ is antisymmetric.
\end{proof}

As a consequence of Lemma \ref{ordering}, we can rename the elements of $X$, in such a way that $\mathscr{D}(e_1)=\{1,2,\ldots,i_1\}$, $\mathscr{D}(e_2)=\{i_1+1,i_1+2,\ldots,i_2\}$, ..., $\mathscr{D}(e_t)=\{i_{t-1}+1,\ldots,n\}$. Since $e_iI(X)e_j=\mathrm{Span}\{e_{k\ell}\mid k\in\mathscr{D}(e_i),\ell\in\mathscr{D}(e_j),k<\ell\}$, we have $\dim e_iI(X)e_j\le\dim D_i\dim D_j$. Moreover, as graded vector space, $I(X)=\sum_{i,j}e_iI(X)e_j$. In this way, we obtain the proof of the Theorem \ref{class}.
\begin{Cor}\label{firstpartthm}
Up to a graded isomorphism, there exists commutative graded division algebras $D_1,D_2,\ldots,D_t$, all of them having all homogeneous subspaces with dimension at most 1, such that
$$
I(X)\cong\left(\begin{array}{cccc}D_1&M_{1,2}&\ldots&M_{1,t}\\&D_2&\ddots&\vdots\\&&\ddots&M_{t-1,t}\\&&&D_t\end{array}\right),
$$
where each $M_{i,j}$ is a graded $(D_i,D_j)$-bimodule of dimension at most $\dim_\mathbb{F} D_i\dim_\mathbb{F}D_j$.\qed
\end{Cor}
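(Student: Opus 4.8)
The plan is to assemble the structural facts already established about the minimal idempotents $e_1,\ldots,e_t$ into the displayed block form. The first step is to record that these idempotents are pairwise orthogonal, homogeneous of degree $1$, and satisfy $e_1+\cdots+e_t=1$: indeed, the inductive construction of the $e_i$ terminates exactly when $(1-e_1-\cdots-e_t)I(X)=0$, which forces $e_1+\cdots+e_t=1$. Consequently one obtains the Peirce decomposition
$$
I(X)=\bigoplus_{i,j=1}^t e_iI(X)e_j,
$$
and, since each $e_i$ is homogeneous of degree $1$, left and right multiplication by $e_i,e_j$ preserves the grading, so this is a decomposition of graded vector spaces; writing $D_i=e_iI(X)e_i$ and $M_{ij}=e_iI(X)e_j$, each summand is a graded subspace.

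Next I would turn this into an upper triangular decomposition. By Lemma \ref{ordering}, $\trianglelefteq$ is a partial order on $\mathcal{E}$, so I would fix a linear extension of it and use this extension, together with Corollary \ref{noncomparable}, to rename the elements of $X$ so that $\mathscr{D}(e_1)=\{1,\ldots,i_1\},\ldots,\mathscr{D}(e_t)=\{i_{t-1}+1,\ldots,n\}$ become consecutive integer intervals listed in $\trianglelefteq$-increasing order, while preserving the convention that $k\le_X\ell$ implies $k\le\ell$ as integers. The point is that if $k\le_X\ell$ with $k\in\mathscr{D}(e_i)$ and $\ell\in\mathscr{D}(e_j)$, then either $i=j$ and $k=\ell$ (by Corollary \ref{noncomparable} the elements of a single $\mathscr{D}(e_i)$ are pairwise incomparable), or $i\ne j$ and $e_i\trianglelefteq e_j$ with $e_i\ne e_j$, so that block $i$ precedes block $j$ and $k<\ell$. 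With this renaming, $e_iI(X)e_j=\mathrm{Span}\{e_{k\ell}\mid k\in\mathscr{D}(e_i),\ell\in\mathscr{D}(e_j),k\le_X\ell\}$ vanishes whenever $i>j$, and the Peirce decomposition acquires the required upper triangular block shape.

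Finally I would identify the blocks. Each diagonal block $D_i=e_iI(X)e_i$ is, by Corollary \ref{gradedalgebra}, a commutative graded division algebra with unit $e_i$; and since $J(D_i)=0$, Lemma \ref{mainlemma}(ii) shows that every homogeneous component of $D_i$ has dimension at most $1$. Because the elements of $\mathscr{D}(e_i)$ are pairwise incomparable, $D_i=\mathrm{Span}\{e_{kk}\mid k\in\mathscr{D}(e_i)\}$, so $\dim_\mathbb{F}D_i=|\mathscr{D}(e_i)|$. Each off-diagonal block $M_{ij}$ is a graded $(D_i,D_j)$-bimodule under left multiplication by $D_i$ and right multiplication by $D_j$, and by Lemma \ref{link} its dimension is $\ell(e_i,e_j)=|\mathscr{D}(e_i)|\,\ell(x,e_j)\le|\mathscr{D}(e_i)|\,|\mathscr{D}(e_j)|=\dim_\mathbb{F}D_i\,\dim_\mathbb{F}D_j$, which is the claimed bound.

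The main obstacle is not any single deep argument—nearly all of the content is supplied by Corollaries \ref{gradedalgebra} and \ref{noncomparable} and by Lemmas \ref{mainlemma}, \ref{link} and \ref{ordering}—but rather the careful verification that the renaming of $X$ can be performed so that, simultaneously, the sets $\mathscr{D}(e_i)$ become consecutive integer intervals ordered by $\trianglelefteq$ and the defining convention $k\le_X\ell\Rightarrow k\le\ell$ is retained. This compatibility is exactly what guarantees that the lower blocks vanish and that $I(X)$ remains realized inside $UT_n$; it rests on the antisymmetry of $\trianglelefteq$ from Lemma \ref{ordering} and on the pairwise incomparability of $\mathscr{D}(e_i)$ from Corollary \ref{noncomparable}.
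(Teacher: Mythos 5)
Your proposal is correct and follows essentially the same route as the paper: the paper's (very terse) argument is exactly the Peirce decomposition with respect to the minimal idempotents, the renaming of $X$ via Lemma \ref{ordering} so that the sets $\mathscr{D}(e_i)$ become consecutive intervals, and the observation that $e_iI(X)e_j$ is spanned by at most $|\mathscr{D}(e_i)||\mathscr{D}(e_j)|$ matrix units. Your additional care about $e_1+\cdots+e_t=1$ and about choosing a linear extension of $\trianglelefteq$ compatible with the convention $k\le_X\ell\Rightarrow k\le\ell$ just makes explicit what the paper leaves implicit.
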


If $D$ is a commutative graded division algebra, where each homogeneous component has dimension at most 1 and $D\cong\mathbb{F}^{\dim D}$, then $D\cong \mathbb{F}H$, where $H=\mathrm{Supp}\,D$. Indeed, a set of nonzero homogeneous elements $\{X_u\mid u\in H\}$, where $\deg X_u=u$ is a vector space basis of $D$. Also, $X_uX_v=\sigma(u,v)X_{uv}$, for some (nonzero) $\sigma(u,v)\in\mathbb{F}$. Since $D$ is associative, we obtain that $\sigma$ is a 2-cocycle. Thus, $\sigma\in Z^2(H,\mathbb{F}^\times)$, and $D\cong\mathbb{F}^\sigma H$ (the twisted group algebra). To complete the proof, we need the following result, communicated by M. Kochetov:
\begin{Prop}\label{kochetov}
Let $H$ be a finite abelian group and $\sigma\in Z^2(H,\mathbb{F}^\times)$. If $\mathbb{F}^\sigma H\cong\mathbb{F}^{|H|}$, then $\mathbb{F}H\cong\mathbb{F}^{|H|}$.
\end{Prop}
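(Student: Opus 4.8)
The plan is to verify condition (iii) of Lemma \ref{equiv_group_algebra} for the group $H$; once this is done, the implication (iii)$\Rightarrow$(ii) of that lemma gives $\mathbb{F}H\cong\mathbb{F}^{\hat H}\cong\mathbb{F}^{|H|}$, which is exactly the conclusion. After replacing $\sigma$ by a cohomologous cocycle I may assume it is normalized, so that $u_1$ is the unit of $\mathbb{F}^\sigma H$ and $\{u_h\mid h\in H\}$ is a homogeneous basis with $u_gu_h=\sigma(g,h)u_{gh}$.

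The observation I would isolate first is that \emph{every unital subalgebra $A$ of $\mathbb{F}^m$ satisfies $A\cong\mathbb{F}^{\dim_\mathbb{F}A}$}. Indeed, writing an element $a\in A\subseteq\mathbb{F}^m$ as a tuple $(a_1,\dots,a_m)$, the polynomial $\prod_\lambda(x-\lambda)$, the product taken over the distinct coordinates $\lambda$ of $a$, is squarefree, splits completely over $\mathbb{F}$, and annihilates $a$. Hence the minimal polynomial of each $a\in A$ is a product of distinct monic linear factors, so each $\mathbb{F}[a]$ is a product of copies of $\mathbb{F}$; since $A$ is commutative its elements are simultaneously diagonalizable, whence $A\cong\mathbb{F}^{\dim_\mathbb{F}A}$.

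Next I would reduce to the cyclic case. Choose $g\in H$ with $\mathrm{ord}(g)=q=\exp H$ (possible since $H$ is abelian), and let $C=\langle g\rangle$. The span of $\{u_c\mid c\in C\}$ is a unital subalgebra of $\mathbb{F}^\sigma H\cong\mathbb{F}^{|H|}$, isomorphic to $\mathbb{F}^{\sigma|_C}C$; by the previous paragraph it is isomorphic to $\mathbb{F}^q$. On the other hand $u:=u_g$ satisfies $u^q=c_0u_1$ for some nonzero scalar $c_0$, while $1,u,\dots,u^{q-1}$ are linearly independent, so $\mathbb{F}^{\sigma|_C}C\cong\mathbb{F}[x]/(x^q-c_0)$. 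Therefore $\mathbb{F}[x]/(x^q-c_0)\cong\mathbb{F}^q$, which (by the Chinese Remainder Theorem) forces $x^q-c_0$ to factor into $q$ distinct monic linear factors over $\mathbb{F}$. If $r_1,\dots,r_q$ are its distinct roots, the ratios $r_i/r_1$ are $q$ distinct $q$-th roots of unity in $\mathbb{F}$; hence $\mathrm{char}\,\mathbb{F}\nmid q$ and $\mathbb{F}$ contains a primitive $q$-th, that is a primitive $\exp H$-th, root of unity.

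Finally, for an arbitrary $h\in H$ of order $d$ one has $d\mid\exp H$, so $\mathrm{char}\,\mathbb{F}\nmid d$ and a suitable power of the primitive $\exp H$-th root of unity is a primitive $d$-th root of unity in $\mathbb{F}$. Thus condition (iii) of Lemma \ref{equiv_group_algebra} holds, which completes the argument. The only genuinely delicate point is the subalgebra step, namely that the restriction of the twisted structure to a cyclic subgroup remains split over $\mathbb{F}$; everything after that is bookkeeping with the cocycle and the elementary analysis of $\mathbb{F}[x]/(x^q-c_0)$.
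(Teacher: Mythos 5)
Your proof is correct, but it follows a genuinely different route from the paper's. The paper writes $H=\langle h_1\rangle\times\cdots\times\langle h_r\rangle$ as a full product of cyclic groups, identifies $\mathbb{F}^\sigma H$ with the tensor product $\bigotimes_i\mathbb{F}[x_i]/(x_i^{m_i}-a_i)$, and transfers splitness factor by factor using the observation that both $\dim$ and the number $N(\cdot)$ of algebra homomorphisms to $\mathbb{F}$ are multiplicative over tensor products; it then replaces each $a_i$ by $1$ to conclude for $\mathbb{F}H$. You instead examine a \emph{single} cyclic subgroup $C=\langle g\rangle$ with $|C|=\exp H$, note that $\mathrm{Span}\{u_c\mid c\in C\}\cong\mathbb{F}[x]/(x^q-c_0)$ is a unital subalgebra of $\mathbb{F}^{|H|}$, and use the (correct, and worth stating as a lemma) fact that every unital subalgebra of $\mathbb{F}^m$ is itself split to extract a primitive $\exp H$-th root of unity; the passage from this single root of unity to all of $\mathbb{F}H$ is then delegated to Lemma \ref{equiv_group_algebra}, (iii)$\Rightarrow$(ii). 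Each approach has its merits: the paper's is self-contained and exhibits the explicit tensor-product form of $\mathbb{F}^\sigma H$ (note that this decomposition implicitly uses the commutativity of $\mathbb{F}^\sigma H$, which follows from the hypothesis), whereas yours avoids the decomposition of $H$ and the multiplicativity argument altogether, at the cost of leaning on the equivalences of Lemma \ref{equiv_group_algebra} and on the subalgebra claim, whose justification via ``simultaneous diagonalization'' you should tighten slightly (e.g., observe that the squarefree split minimal polynomials force $A$ to be reduced, hence a product of field extensions each equal to $\mathbb{F}$, or count the distinct coordinate projections against Dedekind's independence of characters).
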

\begin{proof}
Given a finite-dimensional unital commutative algebra $\mathcal{A}$ over $\mathbb{F}$, let $N(\mathcal{A})$ denote the number of algebra homomorphisms from $\mathcal{A}$ to $\mathbb{F}$.

\noindent\textbf{Claim.} $\mathcal{A}$ is a direct sum of copies of $\mathbb{F}$ if and only if $N(\mathcal{A})=\dim\mathcal{A}$.

Indeed, $(\Rightarrow)$ is obvious, and $(\Leftarrow)$ follows from linear independence of distinct homomorphisms.

Note that both $N$ and $\dim$ are multiplicative over tensor products of algebras.

Now, write $H=\langle h_1\rangle\times\cdots\times\langle h_r\rangle$, direct product of cyclic groups, where each $h_i$ has order $m_i$. Then, we notice that
$$
\mathbb{F}^\sigma H\cong\mathbb{F}[x_1]/(x_1^{m_1}-a_1)\otimes\cdots\otimes\mathbb{F}[x_r]/(x_r^{m_r}-a_r),
$$
where $a_i=\prod_{j=1}^{m_i-1}\sigma(h_i,h_i^j)\in\mathbb{F}$. In the same way, $\mathbb{F}H\cong\bigotimes_{i=1}^r\mathbb{F}[x_i]/(x_i^{m_i}-1)$.

From the remarks above, $\mathbb{F}^\sigma H$ is a direct sum of copies of $\mathbb{F}$ if and only if each $\mathbb{F}[x_i]/(x_i^{m_i}-a_i)$ is a direct sum of copies of $\mathbb{F}$. The former condition is equivalent to $\mathbb{F}$ contains $m_i$ distinct $m_i$-roots of $a_i$; and this implies that $\mathbb{F}$ contains a primitive $m_i$-th root of $1$. Thus, each $\mathbb{F}[x_i]/(x_i^{m_i}-1)$ is a direct sum of $\mathbb{F}$'s. Hence, by the remarks above, $\mathbb{F}H$ is a direct sum of copies of $\mathbb{F}$.
\end{proof}
As a consequence, $\mathbb{F}^\sigma H\cong\mathbb{F}H$ as ungraded algebras. From \cite[Corollary 1.2]{karpilovsky}, we obtain that $\alpha$ is a coboundary map. Thus, $\mathbb{F}^\sigma H\cong\mathbb{F}H$ as graded algebras.

\begin{Remark}\label{remarkassociatedposet}
Consider the notation of Corollary \ref{firstpartthm}. Let $e_i\in D_i$ be the unity. Set $\mathcal{E}=\{1,2,\ldots,t\}$. We define the partial order $\trianglelefteq$ on $\mathcal{E}$ by the following (equivalent) conditions (see Lemma \ref{ordering}):
\begin{enumerate}
\item $i\trianglelefteq j$,
\item $e_i\trianglelefteq e_j$,
\item there exists $x\in\mathscr{D}(e_i)$ and $y\in\mathscr{D}(e_j)$ such that $i\le j$,
\item $M_{ij}\ne0$.
\end{enumerate}
Note that we can identify the present definition of $\mathcal{E}$ with that given in Definition \ref{defassociatedposet}. We call $\mathcal{E}$ the \emph{associated poset} of the graded algebra $I(X)$.
\end{Remark}

Note that we have the following equality:
\[
|H_i|=\dim D_i=|\mathscr{D}(e_i)|.
\]

%\subsection{Structure of the graded modules}
%In this subsection, we explore properties of the graded $(D_i,D_j)$-bimodule $M_{ij}$.

%We prove the last part of Theorem \ref{class}.
%\begin{Lemma}
%The $(D_i,D_j)$-bimodule $M_{ij}$ is freely generated, as a bimodule, by $\ell_{ij}$ homogeneous elements, where $0\le\ell_{ij}\le|H_i\cap H_j|\le\mathrm{gcd}(|\mathscr{D}(e_i)|,|\mathscr{D}(e_j)|)$.
%\end{Lemma}
%\begin{proof}
%Let $m_1,\ldots,m_q$ be free homogeneous generators of $M_{ij}$, as a bimodule. Note that, for each $k=1,2,\ldots,q$, $\mathrm{Supp}\,D_im_kD_j\subset H_i\deg m_k H_j$, where $H_i\deg m_kH_j$ is the double coset of $\deg m_k$ in $(H_i,H_j)$. Let $g_k=\deg m_k$. So
%\[
%\dim D_im_kD_j\ge|H_ig_kH_j|\ge\frac{|H_i||H_j|}{|H_i\cap g_kH_jg_k^{-1}|}.
%\]
%Moreover,
%\[
%\ell(e_i,e_j)=\dim M_{ij}=\dim\bigoplus_{k=1}^qD_im_kD_j\ge q\min_{x\in G}\frac{|H_i||H_j|}{|H_i\cap xH_jx^{-1}|}.
%\]
%So, we obtain
%\[
%q\le\min_{x\in G}\frac{\ell(e_i,e_j)|H_i\cap xH_jx^{-1}|}{|H_i||H_j|}\le|H_i\cap H_j|\mathrm{gcd}(|H_i|,|H_j|).
%\]
%\end{proof}

%An immediate consequence is
%\begin{Cor}
%Assume that $H_i\cap H_j=1$. Then $M_{ij}=0$, or $M_{ij}\cong D_i\otimes_\mathbb{F}D_j$.
%\end{Cor}

\section{On the structure of graded bi-vector spaces\label{S_bivectorspace}}
%In this section, we will study the structure of graded $(\mathbb{F}H_1,\mathbb{F}H_2)$-bimodules. The structure of graded left (or right) $\mathbb{F}H$-modules are well-known. A similar theory of classical linear algebra holds for graded $\mathbb{F}H$-modules. In particular, every graded $\mathbb{F}H$-module is free, and several facts that are valid in the basic theory of linear algebra still hold for this context, for instance, the notion of dimension. More generally, a similar theory is valid for graded modules over graded division rings. If $D$ is a $G$-graded division algebra, then every $G$-graded left $D$-module $V$ has a decomposition $V=\bigoplus_{i\in I}V_i$, where each $V_i\cong D^{[g_i]}$, for some $g_i\in G$. Thus, graded left $D$-modules are often called graded left $D$-vector spaces.

In this section, we shall investigate graded bimodules. We are interested in the special case where $G$ is an abelian group, $H_1,H_2\subseteq G$ are finite subgroups, and $V$ is a $G$-graded $(\mathbb{F}H_1,\mathbb{F}H_2)$-bimodule. We will conclude that, in this situation, $V$ is a $\mathbb{F}H$-vector space, where $H=H_1H_2$. Hence, the bimodules have a nice description in this case.

The notion of tensor product will play a fundamental role in this section. Recall that, if $R$ is any $\mathbb{F}$-algebra, $M,N$ are $\mathbb{F}$-vector spaces, $M$ is a right $R$-module, and $N$ is a left $R$-module, then $M\otimes_RN$ is a uniquely defined $\mathbb{F}$-vector space, satisfying a universal property. It is spanned by $m\otimes n$, where $m\in M$, and $n\in N$, and they satisfy $mr\otimes n=m\otimes rn$, $r\in R$. For the special case where $M$ and $N$ are algebras, then $M\otimes_RN$ is an algebra as well.

We are interested in the following special case: let $R,A_1,A_2$ be $\mathbb{F}$-algebras, and assume $\varphi_i:R\to A_i$ homomorphism of algebras, for $i=1,2$. Then $A_1$ becomes a right $R$-module by $a\cdot r=a\varphi_1(r)$, and, similarly, $A_2$ becomes a left $R$-module. So we can construct the tensor product $A_1\otimes_RA_2$. Since $\varphi_1$ and $\varphi_2$ will play a somewhat significant role, we use the notation $(A_1\otimes_RA_2,\varphi_1,\varphi_2)$. In this case, $A_1\otimes_RA_2$ is spanned by $a_1\otimes a_2$, for $a_i\in\beta_i$, where $\beta_i$ is any basis of $A_i$; also, we have $b_1\varphi_1(r)\otimes b_2=b_1\otimes\varphi_2(r)b_2$, for any $r\in R$, $b_1\in A_1$, $b_2\in A_2$.

Now, assume further that $R,A_1,A_2$ are $G$-graded algebras, and $\varphi_1,\varphi_2$ are $G$-graded homomorphism of algebras. So $A_1$ and $A_2$ are $G$-graded $R$-modules. The algebra $A_1\otimes_RA_2$ admits a natural structure of $G$-grading, where the homogeneous component of degree $g\in G$ is spanned by $a_1\otimes a_2$, $a_i\in A_i$ homogeneous satisfying $\deg a_1\deg a_2=g$. Moreover, $A_1\otimes_RA_2$ becomes a $G$-graded algebra. In what follows, we will see that, in the case of group algebras, the just defined grading has a nicer way to be constructed.

\begin{Lemma}\label{Lem0}
Let $G$ be an abelian group, and $H_1,H_2\subseteq G$ be finite subgroups. Let $H=H_1H_2$, and $\mathcal{H}=H_1\cap H_2$. Let $\iota_j:\mathbb{F}\mathcal{H}\to\mathbb{F}H_j$ be $G$-graded monomorphisms, for $j=1,2$. Then, as graded algebras,
$$
(\mathbb{F}H_1\otimes_{\mathbb{F}\mathcal{H}}\mathbb{F}H_2,\iota_1,\iota_2)\cong\mathbb{F}H.
$$
\end{Lemma}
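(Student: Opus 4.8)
The plan is to produce an explicit graded algebra isomorphism $\Phi\colon\mathbb{F}H_1\otimes_{\mathbb{F}\mathcal{H}}\mathbb{F}H_2\to\mathbb{F}H$, essentially the multiplication map $h_1\otimes h_2\mapsto h_1h_2$ corrected by a character; here the product $h_1h_2$ is formed inside $\mathbb{F}H$, which makes sense because $H_1,H_2\subseteq H$ and $G$ is abelian. The first step is to pin down the monomorphisms $\iota_j$. Since each $\iota_j$ is a graded homomorphism and the degree-$h$ component of $\mathbb{F}H_j$ is the line $\mathbb{F}h$ for $h\in\mathcal{H}$, we must have $\iota_j(h)=\lambda_j(h)h$ for some $\lambda_j\colon\mathcal{H}\to\mathbb{F}^\times$, and multiplicativity of $\iota_j$ forces each $\lambda_j$ to be a character of $\mathcal{H}$. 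I set $\lambda=\lambda_2\lambda_1^{-1}\in\hat{\mathcal{H}}$, which measures the discrepancy between the two embeddings.

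Next I would choose a character $\mu\colon H_2\to\mathbb{F}^\times$ extending $\lambda^{-1}=\lambda_1\lambda_2^{-1}$ from $\mathcal{H}$ to $H_2$. This extension exists because, under the standing hypotheses, $\mathbb{F}$ contains a primitive $\exp H_2$-th root of unity, so $\hat{H}_2\cong H_2$ (Lemma \ref{equiv_group_algebra}) and the restriction map $\hat{H}_2\to\hat{\mathcal{H}}$ is surjective by an order count. I then define $\Phi$ on homogeneous generators by $\Phi(h_1\otimes h_2)=\mu(h_2)\,h_1h_2$. The one genuine point to check is well-definedness across the balancing relation: for $r\in\mathcal{H}$ one computes $\Phi(h_1\iota_1(r)\otimes h_2)=\lambda_1(r)\mu(h_2)\,h_1rh_2$ and $\Phi(h_1\otimes\iota_2(r)h_2)=\lambda_2(r)\mu(r)\mu(h_2)\,h_1rh_2$, and these coincide exactly because $\mu(r)=\lambda^{-1}(r)=\lambda_1(r)\lambda_2(r)^{-1}$ on $\mathcal{H}$. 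Hence $\Phi$ descends to a well-defined linear map; it is graded since $\deg(h_1\otimes h_2)=\deg(h_1h_2)$, and it is multiplicative because $\mu$ is a character and $H$ is abelian.

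It then remains to verify bijectivity, for which a dimension count is enough. Viewing $\mathbb{F}H_2$ as a free module over the subalgebra $\iota_2(\mathbb{F}\mathcal{H})=\mathbb{F}\mathcal{H}$ of rank $[H_2:\mathcal{H}]$, the tensor product is a free $\mathbb{F}H_1$-module of the same rank, so $\dim(\mathbb{F}H_1\otimes_{\mathbb{F}\mathcal{H}}\mathbb{F}H_2)=|H_1|\,|H_2|/|\mathcal{H}|=|H_1H_2|=|H|=\dim\mathbb{F}H$. Since $\Phi$ is clearly surjective (its image contains every $h_1h_2\in H_1H_2=H$), it is an isomorphism of graded algebras, as required.

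I expect the main obstacle to be precisely the verification of well-definedness, that is, producing the correcting character $\mu$ with the correct restriction to $\mathcal{H}$. The naive multiplication map $h_1\otimes h_2\mapsto h_1h_2$ fails to respect the balancing as soon as $\lambda_1\ne\lambda_2$, and the twist by $\mu$ is exactly what repairs it. This is where the hypothesis that $\mathbb{F}$ carries enough roots of unity (so that characters of $\mathcal{H}$ extend to $H_2$) is genuinely used; phrased cohomologically, it is what forces the symmetric $2$-cocycle describing the commutative graded division algebra $\mathbb{F}H_1\otimes_{\mathbb{F}\mathcal{H}}\mathbb{F}H_2$ to be a coboundary.
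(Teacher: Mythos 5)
Your proof is correct and takes essentially the same approach as the paper: both identify the monomorphisms $\iota_j$ with characters of $\mathcal{H}$, define the multiplication map $h_1\otimes h_2\mapsto h_1h_2$ twisted by a character extended from $\mathcal{H}$ (the paper extends a correcting character on each of $H_1$ and $H_2$, you use a single extension on $H_2$), and verify well-definedness across the balancing relation in the same way. The only substantive difference is that you conclude bijectivity from surjectivity plus the dimension count $\dim(\mathbb{F}H_1\otimes_{\mathbb{F}\mathcal{H}}\mathbb{F}H_2)=|H_1|\,|H_2|/|\mathcal{H}|=|H|$, whereas the paper computes the kernel directly; both arguments are valid, and your explicit justification of why the character extends (via the order count on restriction $\hat{H}_2\to\hat{\mathcal{H}}$) is if anything more careful than the paper's bare assertion.
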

\begin{proof}
First we note that $\mathbb{F}H_1\otimes_{\mathbb{F}\mathcal{H}}\mathbb{F}H_2$ has a natural well-defined $G$-grading, given by $\deg h_1\otimes h_2=h_1h_2$, where $h_1\in H_1$, $h_2\in H_2$.

Now, since every homogeneous component of $\mathbb{F}H_j$ has dimension at most 1, we see that an inclusion $\iota_j(h)=\chi_j(h)h$, for each $h\in\mathcal{H}$, for some $\chi_j(h)\in\mathbb{F}$. Moreover, $h\mapsto\chi_j(h)\in\mathbb{F}$ is a character of $H$. Consider the character given by $\chi_j^{-1}$. It is known that every character of $\mathcal{H}$ can be extended to a character of $H_j$. % (for instance, we know that $H_j\cong\hat H_j$, where $\hat H_j$ is the group of characters of $H_j$; so $\mathcal{H}\cong\hat{\mathcal{H}}$ can be identified as a subgroup of $\hat H_j$; in particular, every element of $\hat{\mathcal{H}}$ is an element of $\hat H_j$ as well).
 So, let $\bar\chi_j$ be an extension of $\chi_j^{-1}$.

Let $\varphi:\mathbb{F}H_1\otimes_{\mathbb{F}\mathcal{H}}\mathbb{F}H_2\to\mathbb{F}H$ be defined by $\varphi(h_1\otimes h_2)=\bar\chi_1(h_1)\bar\chi_2(h_2)h_1h_2$. If $h\in\mathcal{H}$, then
\begin{align*}
&\varphi(h_1\iota_1(h)\otimes h_2)=\chi_1(h)\bar\chi_1(h_1h)\bar\chi_2(h_2)h_1hh_2=\chi_1(h)\bar\chi_1(h)\bar\chi_1(h_1)\bar\chi_2(h_2)h_1hh_2,\\
&\varphi(h_1\otimes\iota_2(h)h_2)=\chi_2(h)\bar\chi_1(h_1)\bar\chi_2(hh_2)h_1hh_2=\chi_2(h)\bar\chi_2(h)\bar\chi_1(h_1)\bar\chi_2(h_2)h_1hh_2.
\end{align*}
Since $\chi_1(h)\bar\chi_1(h)=\chi_2(h)\bar\chi_2(h)=1$, we obtain that $\varphi$ is well-defined.

Since $G$ is abelian, $\varphi$ is an onto homomorphism of graded algebras.

Let $\sum_{i=1}^m\lambda_ig_i\otimes h_i\in\mathrm{Ker}\,\varphi$. As $\mathrm{Ker}\,\varphi$ is a graded subspace, we can suppose that $g_1h_1=\ldots=g_mh_m$. Moreover, we can assume that $h_1,\ldots,h_m$ are $\mathbb{F}\mathcal{H}$-linearly independent. Thus, $g_ih_i=g_jh_j$ implies $h_ih_j^{-1}\in\mathcal{H}$, which is a contradiction if $m>1$. Hence, $m\le 1$, and this is sufficient to conclude that $\mathrm{Ker}\,\varphi=0$.
\end{proof}

%Now, every graded $(\mathbb{F}H_i,\mathbb{F}H_j)$-bimodule is a graded left $\mathbb{F}H_i\otimes_\mathbb{F}(\mathbb{F}H_j)^\mathrm{op}$-module. In what follows, we will prove that every graded left $\mathbb{F}H_i\otimes_\mathbb{F}(\mathbb{F}H_j)^\mathrm{op}$-module is a graded left $\mathbb{F}H_i\otimes_{\mathbb{F}(H_i\cap H_j)}(\mathbb{F}H_j)^\mathrm{op}$-module as well.

\begin{Lemma}\label{Lem1}
Let $G$ be an abelian group, $H_1,H_2\subseteq G$ finite subgroups, $\mathcal{H}=H_1\cap H_2$. Let $V$ be a finite-dimensional $G$-graded $(\mathbb{F}H_1,\mathbb{F}H_2)$-bimodule. Then, there exist $m_1,\ldots,m_s\in V$ homogeneous, and $\chi_1,\ldots,\chi_s\in\hat{\mathcal{H}}$, such that
$$
V=\mathbb{F}H_1m_1\mathbb{F}H_2\oplus\cdots\oplus\mathbb{F}H_1m_s\mathbb{F}H_2,
$$
and $hm_i=\chi_i(h)m_ih$, for all $i=1,\ldots,s$, $h\in\mathcal{H}$.
\end{Lemma}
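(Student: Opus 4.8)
The plan is to exploit the two commuting actions of $\mathcal{H}$ on $V$ and to diagonalize their discrepancy. For each $h\in\mathcal{H}$, I would define the $\mathbb{F}$-linear operator $T_h\colon V\to V$ by $T_h(v)=hvh^{-1}$, using the left $\mathbb{F}H_1$-action and the right $\mathbb{F}H_2$-action (both legitimate, as $\mathcal{H}\subseteq H_1\cap H_2$). The bimodule axioms give $T_hT_{h'}=T_{hh'}$, so $h\mapsto T_h$ is a representation of the (abelian) group $\mathcal{H}$ on $V$. The key point, where abelianness of $G$ is used, is that each $T_h$ preserves the grading: if $v\in V_g$ then $hvh^{-1}\in V_{hgh^{-1}}=V_g$. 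Thus $\{T_h\mid h\in\mathcal{H}\}$ is a finite commuting family of grading-preserving operators.

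Next I would simultaneously diagonalize this family. Since $\mathcal{H}$ is a finite abelian group and $\mathbb{F}$ contains the roots of unity required (this holds in our setting because $\mathcal{H}\subseteq H_1$ inherits the field conditions of Theorem \ref{class}), each $T_h$ is diagonalizable with eigenvalues among $\{\chi(h)\mid\chi\in\hat{\mathcal{H}}\}$, so that $V=\bigoplus_{\chi\in\hat{\mathcal{H}}}V^\chi$, where $V^\chi=\{v\in V\mid T_h(v)=\chi(h)v\text{ for all }h\in\mathcal{H}\}$. Each $T_h$ respects the grading, so every $V^\chi$ is a graded subspace; and because left multiplication by $H_1$ and right multiplication by $H_2$ commute with every $T_h$ (again using $G$ abelian), each $V^\chi$ is in fact a graded sub-bimodule. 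By construction, every $v\in V^\chi$ satisfies $hv=\chi(h)vh$ for all $h\in\mathcal{H}$.

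It then remains to decompose each $V^\chi$ into cyclic sub-bimodules. On $V^\chi$ the relation $hv=\chi(h)vh$ says exactly that left multiplication by $\iota_1(h)=h$ and right multiplication by $\iota_2(h):=\chi(h)h$ agree, so the bimodule action factors through the balanced tensor product $(\mathbb{F}H_1\otimes_{\mathbb{F}\mathcal{H}}\mathbb{F}H_2,\iota_1,\iota_2)$, where $\iota_1$ is the inclusion and $\iota_2$ is the twisted inclusion $h\mapsto\chi(h)h$, which is still a graded monomorphism since $\chi\in\hat{\mathcal{H}}$. By Lemma \ref{Lem0} this algebra is graded-isomorphic to $\mathbb{F}H$ with $H=H_1H_2$, a commutative graded division algebra with one-dimensional homogeneous components. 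Hence $V^\chi$ is a graded $\mathbb{F}H$-module, and every graded module over a graded division algebra is free; writing $V^\chi=\bigoplus_k\mathbb{F}H\,m_k$ with $m_k$ homogeneous gives $\mathbb{F}H\,m_k=\mathbb{F}H_1m_k\mathbb{F}H_2$. Collecting the generators $m_k$ over all $\chi$ and all $k$, and assigning to each the corresponding character $\chi$, produces the asserted decomposition, with $hm_k=\chi(h)m_kh$.

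The step I expect to be the main obstacle is the passage from $V^\chi$ to an honest $\mathbb{F}H$-module: one must check that $V^\chi$ is a sub-bimodule rather than merely a subspace, and then package the twist $\iota_2=\chi\cdot(\text{inclusion})$ correctly so that Lemma \ref{Lem0} applies verbatim. Both rely essentially on the commutativity of $G$ — the first to make the left and right multiplications commute with $T_h$, and the second because the character $\chi$ is precisely the correction needed to turn the a priori non-balanced bimodule into a module over $\mathbb{F}H_1\otimes_{\mathbb{F}\mathcal{H}}\mathbb{F}H_2$. A minor point to keep track of is the diagonalization in the second step, which silently needs $\mathbb{F}$ to contain enough roots of unity.
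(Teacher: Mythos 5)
Your proof is correct and rests on the same key idea as the paper's: diagonalizing the representation $h\mapsto L_hR_{h^{-1}}$ (i.e.\ $v\mapsto hvh^{-1}$) of the finite abelian group $\mathcal{H}$, which preserves the grading precisely because $G$ is abelian. The only organizational difference is that you decompose $V$ globally into isotypic components and then invoke Lemma \ref{Lem0} together with freeness of graded modules over graded division algebras to obtain the cyclic pieces --- thereby also establishing the content of Lemma \ref{Lem2} and Corollary \ref{main_cor} en route --- whereas the paper works one homogeneous component $W_g$ at a time and sweeps out the orbit $H_1gH_2$.
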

\begin{proof}
Let $W_g$ be the homogeneous component of degree $g$ of $V$. Given $h\in\mathcal{H}$, let $L_h,R_h:V\to V$ denote the maps $L_h(m)=hm$, $R_h(m)=mh$. Then $L_h$ is a linear isomorphism from $W_g$ to $W_{hg}$. Also, $R_{h^{-1}}:W_{gh}\to W_g$ is an isomorphism as well. Thus, $L_hR_{h^{-1}}\in\mathrm{GL}(W_g)$. It is elementary to see that this defines a linear representation $h\in\mathcal{H}\to L_hR_{h^{-1}}\in\mathrm{GL}(W_g)$.

Since $\mathcal{H}$ is abelian, the representation is completely reducible, and each irreducible representation has degree 1. So, we can find a vector space basis $v_1,\ldots,v_r\in W_g$, and characters $\chi_1,\ldots,\chi_r\in\mathcal{H}$ such that $L_hR_{h^{-1}}(v_i)=\chi_i(h)v_i$, for all $i=1,\ldots,r$. In particular, this gives $hv_i=\chi_i(h)v_ih$, for all $h\in\mathcal{H}$.

It is clear that every component $W_{g'}$, where $g'\in H_1gH_2$ belongs to $\mathbb{F}H_1v_1\mathbb{F}H_2\oplus\cdots\oplus\mathbb{F}H_1v_r\mathbb{F}H_2$. Thus, we can take $g_2\notin H_1gH_2$ and repeat the process. Since $\dim V<\infty$, the process ends.
\end{proof}

%%Now, given $g_1,g_1'\in H_1$, and $g_2,g_2'\in H_2$, we have $g_1g_2=g_1'g_2'$ if, and only if, $g_1^{\prime-1}g_1=g_2'g_2^{-1}\in\mathcal{H}$. Hence, $g_1g_2=g_1'g_2'$ if, and only if, there exists $h\in\mathcal{H}$ such that $g_1=g_1'h$, and $g_2=h^{-1}g_2'$. Also, in the notation of the previous lemma,
In the notation of the previous lemma, given $g_1\in H_1$ and $g_2\in H_2$, we have
$$
g_1hm_ig_2=\chi_i(h)g_1m_ihg_2,\forall h\in\mathcal{H}.
$$

\begin{Lemma}\label{Lem2}
Assuming the same hypotheses of the previous lemma, let $\chi\in\hat{\mathcal{H}}$, and $V=\mathbb{F}H_1m\mathbb{F}H_2$, where $hm=\chi(h)mh$, for all $h\in\mathcal{H}$. Then $V$ is a graded $\mathbb{F}H_1\otimes_{\mathbb{F}\mathcal{H}}\mathbb{F}H_2$-module.
\end{Lemma}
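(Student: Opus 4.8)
The plan is to equip $V$ with the structure of a left module over $T := (\mathbb{F}H_1\otimes_{\mathbb{F}\mathcal{H}}\mathbb{F}H_2,\iota_1,\iota_2)$, for a suitable choice of the graded inclusions $\iota_1,\iota_2$, via the action determined on pure tensors by $(a_1\otimes a_2)\cdot v=a_1va_2$, where $a_1\in\mathbb{F}H_1$, $a_2\in\mathbb{F}H_2$ act through the bimodule structure. Before anything else I would record that the relation $hm=\chi(h)mh$ propagates from the generator $m$ to all of $V$. Indeed, since $G$ is abelian and $V=\mathbb{F}H_1m\mathbb{F}H_2$, every $v\in V$ is a linear combination of elements $g_1mg_2$ with $g_1\in H_1$, $g_2\in H_2$, and for such an element $h(g_1mg_2)=g_1(hm)g_2=\chi(h)g_1mhg_2=\chi(h)(g_1mg_2)h$. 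Hence $hv=\chi(h)vh$ for every $v\in V$ and every $h\in\mathcal{H}$.

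The correct choice of inclusions is dictated by this relation. Writing an arbitrary graded monomorphism $\mathbb{F}\mathcal{H}\to\mathbb{F}H_j$ as $\iota_j(h)=\chi_j(h)h$ for characters $\chi_j$ (possible because each homogeneous component of $\mathbb{F}H_j$ is one-dimensional), I would take $\iota_1(h)=h$ and $\iota_2(h)=\chi(h)h$; the map $\iota_2$ is a graded algebra homomorphism precisely because $\chi$ is a character. With these inclusions, the defining relation of $T$ becomes $a_1\iota_1(h)\otimes a_2=a_1\otimes\iota_2(h)a_2$.

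The key step is to verify that the $\mathbb{F}$-bilinear map $(a_1,a_2)\mapsto(v\mapsto a_1va_2)$ from $\mathbb{F}H_1\times\mathbb{F}H_2$ into $\mathrm{End}_\mathbb{F}(V)$ is $\mathbb{F}\mathcal{H}$-balanced, so that it descends to the balanced tensor product. This is exactly where the propagated relation is used: for $h\in\mathcal{H}$,
$$(a_1\iota_1(h)\otimes a_2)\cdot v=a_1(hv)a_2=\chi(h)a_1vha_2=a_1v(\iota_2(h)a_2)=(a_1\otimes\iota_2(h)a_2)\cdot v,$$
so the two sides of the tensor relation act identically on $V$. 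By the universal property of $\otimes_{\mathbb{F}\mathcal{H}}$ this produces a well-defined map $T\to\mathrm{End}_\mathbb{F}(V)$.

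The remaining module axioms are routine and I would only check them briefly. Unitality is $(1\otimes1)\cdot v=v$; compatibility with the grading follows since $\deg(a_1va_2)=\deg a_1\,\deg v\,\deg a_2=\deg(a_1\otimes a_2)\,\deg v$ for homogeneous arguments; and multiplicativity holds because $(a_1b_1\otimes a_2b_2)\cdot v=a_1b_1va_2b_2$ agrees with $(a_1\otimes a_2)\cdot(b_1vb_2)=a_1b_1vb_2a_2$, using that $\mathbb{F}H_2$ is commutative ($H_2\subseteq G$ is abelian). The main obstacle is the balancing identity above: it forces the twist $\chi$ to be absorbed into the pair of inclusions, and with the naive untwisted inclusions $\iota_1=\iota_2=\mathrm{id}$ the prescribed action would fail to be well-defined unless $\chi$ is trivial. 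Combined with Lemma~\ref{Lem0}, this also realizes $V$ as a graded $\mathbb{F}H$-module, $H=H_1H_2$, which is the description the section is aiming for.
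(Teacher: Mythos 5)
Your proposal is correct and follows essentially the same route as the paper: both choose the twisted inclusion $\iota_2(h)=\chi(h)h$ (with $\iota_1$ untwisted), define the action by $(a_1\otimes a_2)\cdot v=a_1va_2$, and verify well-definedness from the relation $hm=\chi(h)mh$. Your version merely adds the (correct) explicit propagation of this relation to all of $V$ and the routine module axioms, which the paper leaves implicit.
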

\begin{proof}
Consider the inclusions
\begin{align*}
&\iota_1:h\in\mathbb{F}\mathcal{H}\to h\in\mathbb{F}H_1,\\
&\iota_2:h\in\mathbb{F}\mathcal{H}\to\chi(h)h\in\mathbb{F}H_2.
\end{align*}
Define the left action of $\mathbb{F}H_1\otimes_{\mathbb{F}\mathcal{H}}\mathbb{F}H_2$ on $V$ by $g_1\otimes g_2m=g_1mg_2$. It suffices to prove that this action is well-defined. For, assume $g_1\in H_1$, $g_2\in H_2$, $h\in\mathcal{H}$. Then $g_1h\otimes g_2=\chi(h)g_1\otimes hg_2$, and
$$
(g_1h\otimes g_2)m=g_1hmg_2=\chi(h)g_1mhg_2=(g_1\otimes\chi(h)hg_2)m.
$$
\end{proof}

Recall that, if $R$ and $S$ are any rings, $\varphi:R\to S$ is a ring homomorphism, and $M$ is an $S$-module; then $M$ becomes an $R$-module if we set $m\cdot r=m\varphi(r)$, for $m\in M$ and $r\in R$. We summarize the results of this section. 
\begin{Cor}\label{main_cor}
Let $G$ be an abelian group, $H_1,H_2\subseteq G$ finite subgroups, $\mathcal{H}=H_1\cap H_2$, $H=H_1H_2$. Let $V$ be a finite-dimensional $G$-graded $(\mathbb{F}H_1,\mathbb{F}H_2)$-bimodule. Then, there exist homogeneous $m_1,\ldots,m_s\in V$ and characters $\chi_1,\ldots,\chi_s\in\hat{\mathcal{H}}$ such that
\begin{equation}\label{decomp}
V=\mathbb{F}H_1m_1\mathbb{F}H_2\oplus\cdots\oplus\mathbb{F}H_1m_s\mathbb{F}H_2,
\end{equation}
where $hm_\ell=\chi_\ell(h)m_\ell h$, for each $h\in\mathcal{H}$. Moreover, for each $i$, one has $\mathbb{F}H_1m_i\mathbb{F}H_2\cong(\mathbb{F}H)^{[\deg m_i]}$, as graded vector spaces.
\end{Cor}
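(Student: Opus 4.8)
The plan is to assemble the three preceding lemmas into the stated description. First I would invoke Lemma~\ref{Lem1} to obtain homogeneous elements $m_1,\ldots,m_s\in V$ and characters $\chi_1,\ldots,\chi_s\in\hat{\mathcal{H}}$ realizing the direct sum decomposition \eqref{decomp}, with $hm_\ell=\chi_\ell(h)m_\ell h$ for all $h\in\mathcal{H}$ and every $\ell$. This already yields the first two assertions of the corollary; what remains is to establish the graded vector space isomorphism $\mathbb{F}H_1m_i\mathbb{F}H_2\cong(\mathbb{F}H)^{[\deg m_i]}$ for each $i$.

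For a fixed $i$, set $V_i=\mathbb{F}H_1m_i\mathbb{F}H_2$. By Lemma~\ref{Lem2}, applied with the character $\chi=\chi_i$ and the inclusions $\iota_1,\iota_2$ defined there, $V_i$ is a graded left module over $\mathbb{F}H_1\otimes_{\mathbb{F}\mathcal{H}}\mathbb{F}H_2$ via $(g_1\otimes g_2)\cdot m=g_1mg_2$. Next I would apply Lemma~\ref{Lem0} to identify $(\mathbb{F}H_1\otimes_{\mathbb{F}\mathcal{H}}\mathbb{F}H_2,\iota_1,\iota_2)\cong\mathbb{F}H$ as graded algebras. Transporting the module structure along this isomorphism turns $V_i$ into a graded left $\mathbb{F}H$-module, and by construction it is generated by the single homogeneous element $m_i$, hence cyclic.

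The key point is that $\mathbb{F}H$ is a commutative graded division algebra: since $H$ is a group, every homogeneous component $\mathbb{F}h$ is one-dimensional and spanned by an invertible element. As recalled in Section~\ref{S_groupgradings}, every graded module over a graded division algebra is free, and the only graded left ideals are $0$ and the whole algebra. Thus the surjection $\mathbb{F}H\to V_i$, $d\mapsto d\cdot m_i$, is a graded homomorphism whose kernel is a graded left ideal; as $m_i\neq 0$, the kernel is $0$, so the map is an isomorphism of graded $\mathbb{F}H$-modules. Keeping track of degrees, $m_i$ has degree $\deg m_i$, so this isomorphism identifies $V_i$ with the shifted module $(\mathbb{F}H)^{[\deg m_i]}$ as graded vector spaces, which is precisely the claimed description.

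I expect the only delicate step to be the bookkeeping in the last paragraph: one must check that the degree of $m_i$ produces exactly the shift $[\deg m_i]$ in the convention fixed earlier, and that the identification $\mathbb{F}H_1\otimes_{\mathbb{F}\mathcal{H}}\mathbb{F}H_2\cong\mathbb{F}H$ of Lemma~\ref{Lem0} is genuinely compatible with the action on $V_i$, so that $V_i$ is a bona fide $\mathbb{F}H$-module rather than merely a module over the tensor algebra. Everything else is a direct concatenation of Lemmas~\ref{Lem0}, \ref{Lem1}, and~\ref{Lem2}.
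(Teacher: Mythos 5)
Your proof is correct and follows essentially the same route as the paper's: Lemma~\ref{Lem1} for the decomposition and the characters, Lemma~\ref{Lem2} to make each summand $\mathbb{F}H_1m_i\mathbb{F}H_2$ a graded module over the tensor product, and Lemma~\ref{Lem0} to identify that tensor product with $\mathbb{F}H$. Your closing step---that the kernel of $d\mapsto d\cdot m_i$ is a graded left ideal of the graded division algebra $\mathbb{F}H$ and hence zero---merely makes explicit the paper's remark that the module is $1$-dimensional because it is $\mathbb{F}H$-spanned by $m_i$.
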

\begin{proof}
Lemma \ref{Lem1} says that we can find homogeneous $m_1,\ldots,m_s\in V$ such that
$$
V=\mathbb{F}H_1m_1\mathbb{F}H_2\oplus\cdots\oplus\mathbb{F}H_1m_s\mathbb{F}H_2.
$$
By Lemma \ref{Lem2}, every $\mathbb{F}H_1m_i\mathbb{F}H_2$ has a structure of graded $(\mathbb{F}H_1\otimes_{\mathbb{F}\mathcal{H}}\mathbb{F}H_2,\iota_{i1},\iota_{i2})$-module. By Lemma \ref{Lem0}, there exists a graded isomorphism $\varphi_i:\mathbb{F}H\to(\mathbb{F}H_1\otimes_{\mathbb{F}\mathcal{H}}\mathbb{F}H_2,\iota_{i1},\iota_{i2})$. Hence, $\mathbb{F}H_1m_i\mathbb{F}H_2$ has a structure of graded $\mathbb{F}H$-module, and it is $1$-dimensional, since it is $\mathbb{F}H$-spanned by $m_i$. This concludes the proof.
\end{proof}

%Now, we are in conditions to prove Theorem \ref{Thm3}.
%\begin{proof}[Proof of Theorem \ref{Thm3}]
%The previous corollary proves every statement of the theorem, but the assertion concerning the dimension. As stated in Theorem \ref{Thm1}, $M_{ij}$ is a subspace of $\mathbb{F}H_i\otimes_\mathbb{F}\mathbb{F}H_j$, which has $\mathbb{F}$-dimension $|H_i||H_j|$. Now, $\dim_\mathbb{F}M_{ij}=|H_{ij}|\dim_{\mathbb{F}H_{ij}}M_{ij}$, and, since $G$ is abelian, it is known that $|H_{ij}|=|H_i||H_j|/|H_i\cap H_j|$. Since $\dim_\mathbb{F}M_{ij}\le\dim_\mathbb{F}\mathbb{F}H_i\otimes_\mathbb{F}\mathbb{F}H_j$ we obtain $\dim_{\mathbb{F}H_{ij}}M_{ij}\le|H_i\cap H_j|$, the desired inequality.
%\end{proof}

\begin{Remark}
It should be noted that Corollary \ref{main_cor} is no longer valid when the grading group is not abelian. Indeed, let $H$ be a finite abelian group, $C_2=\{1,w\}$ the cyclic group of order 2, and let $G=H\ast C_2$ be the free product of $H$ and $C_2$. Consider $V=\mathbb{F}H\otimes_\mathbb{F}\mathbb{F}H$, and define the $G$-grading on $V$ by $V=\bigoplus_{g\in G}V_g$, where
$$
V_{h_1wh_2}=\mathrm{Span}\{h_1\otimes h_2\},\quad h_1,h_2\in H.
$$
Then $V$ is a $G$-graded vector space, and $V$ has a structure of $G$-graded $(\mathbb{F}H,\mathbb{F}H)$-bimodule. For every nonzero homogeneous $v\in V$, we have $\mathbb{F}Hv\mathbb{F}H=V$; and it is not isomorphic to a shift of $\mathbb{F}H$. Moreover, $V$ is not a $G$-graded left $\mathbb{F}H\otimes_F\mathbb{F}H$-module.
\end{Remark}

Now, we are able to prove Theorem \ref{Thm3}.

\begin{proof}[Proof of Theorem \ref{Thm3}]
Fix $i,j$. We consider the triangular algebra
$$
A=\left(\begin{array}{cc}\mathbb{F}^{m_i}&V_{ij}\\&\mathbb{F}^{m_j}\end{array}\right),
$$
where $m_\ell=|H_\ell|$, which is graded isomorphic to $\left(\begin{array}{cc}\mathbb{F}H_i&M_{ij}\\&\mathbb{F}H_j\end{array}\right)$. Let $e_\ell\in H_\ell$ be the unit. Then $A=(e_i+e_j)I(X)(e_i+e_j)$, and it is the incidence algebra of the subposet $Y=\mathscr{D}(e_i)\cup\mathscr{D}(e_j)\subseteq X$.

Let $\mathcal{H}=H_i\cap H_j$, $\hat{\mathcal{H}}=\{\chi_1,\ldots,\chi_m\}$, and assume $\hat{\mathcal{H}}=\hat{H}_i\cap\hat{H}_j$. Write
$$
\hat{H}_\ell=\chi_1^{(\ell)}\hat{\mathcal{H}}\dot\cup\ldots\dot\cup\chi_{n_\ell}^{(\ell)}\hat{\mathcal{H}},
$$
where $m_\ell=mn_\ell$, for $\ell\in\{i,j\}$ (we can assume $\chi_1^{(\ell)}=1$).

By Lemma \ref{field}, up to renaming the entries, the graded isomorphism $\phi:\mathbb{F}H_i\oplus M_{ij}\oplus\mathbb{F}H_j\to A$ is such that, for $h_1\in H_i$, $h_2\in H_j$,
\begin{equation}\label{iso_char}
\phi(h_1+h_2)=\left(\begin{array}{cc}
(\chi_1^{(i)}\chi_1(h_1),\ldots,\chi_{n_i}^{(i)}\chi_m(h_1))&0\\
&(\chi_1^{(j)}\chi_1(h_2),\ldots,\chi_{n_j}^{(j)}\chi_m(h_2))\end{array}\right).
\end{equation}
Let $M_{ij}=\mathbb{F}H_im_1\mathbb{F}H_j\oplus\cdots\oplus\mathbb{F}H_im_s\mathbb{F}H_j$, with characters $\mu_1,\ldots,\mu_s$, as in \eqref{decomp}. We only need to prove that $\mu_1,\ldots,\mu_{s}$ are pairwise distinct characters. By Corollary \ref{main_cor}, $\dim\mathbb{F}H_im_\ell\mathbb{F}H_j=|H_iH_j|$, so $\phi(m_\ell)\in V_{ij}$ has at least $|H_iH_j|$ non-zero entries.

To make notation easier, we will identify the elements of $\mathscr{D}(e_i)\cup\mathscr{D}(e_j)$ with the respective character $\chi^{(\ell)}_r\chi_s$, as in \eqref{iso_char}. So, for instance, to denote the entry $(1,1)$ of $\phi(h_1+h_2)$ in \eqref{iso_char}, we will write $\phi(h_1+h_2)(\chi_1^{(i)}\chi_1,\chi_1^{(i)}\chi_1)$.

\noindent\textbf{Claim.} For each $\chi'\in\hat{H}_i$, we can find unique $\chi'_{k_1},\ldots,\chi'_{k_{n_j}}\in\hat{\mathcal{H}}$ such that $\phi(m_\ell)(\chi',\chi^{(j)\prime}_r\chi'_{k_r})\ne0$, $r=1,2,\ldots,n_j$.

Indeed, let $\mu\in\hat{H}_i$ and let $\mu'\in\hat{H}_j$ be such that $\phi(m_\ell)(\mu,\mu')\ne0$. Now, if $\mu''\in\hat{H}_j$ is such that $\phi(m_\ell)(\mu,\mu'')\ne0$, then $\mu'$ and $\mu''$ cannot be in the same left coset, unless they coincide, as we will prove now. For each $h\in\mathcal{H}$, $hm_\ell h^{-1}=\mu_\ell(h)m_\ell$. On the other hand, $h\phi(m_\ell)h^{-1}(\mu,\mu')=\mu(h)\phi(m_\ell)(\mu,\mu')\mu'(h^{-1})$, and a similar relation holds for $\mu''$. This implies $\mu'|_{\mathcal{H}}=(\mu_\ell\mu^{-1})|_{\mathcal{H}}=\mu''|_{\mathcal{H}}$. Hence, if $\mu'$ and $\mu''$ are in the same coset of $\hat{\mathcal{H}}$, then $\mu'=\mu''$. So, for each $\mu\in\hat{H}_i$, there exist at most $|H_j|/|\mathcal{H}|$ elements $\mu'$ such that $\phi(m_\ell)(\mu,\mu')\ne0$. Thus,$\phi(m_\ell)$ has at most $|H_i||H_j|/|\mathcal{H}|$ non-zero entries.

This implies that $\phi(m_\ell)$ has exactly $|H_iH_j|$ non-zero entries. Moreover, the non-zero entries of the $\phi(m_1),\ldots,\phi(m_s)$ are disjoint.

Now, let $\ell_1,\ell_2$ with $\mu_{\ell_1}=\mu_{\ell_2}$, and fix $\chi'\in\hat{\mathcal{H}}\subseteq\hat{H}_i$, and let $\chi_1',\chi_2'\in\hat{\mathcal{H}}\subseteq\hat{H}_j$ be such that $\phi(m_{\ell_k})(\chi',\chi'_k)\ne0$, $k=1,2$. For each $h\in\mathcal{H}$, again we have $hm_{\ell_k}h^{-1}=\mu_{\ell_k}(h)m_{\ell_k}$. On the other hand, (recall that $\chi_1^{(i)}=1$, $\chi_1^{(j)}=1$)
$$
(h\phi(m_{\ell_k})h^{-1})(\chi',\chi'_k)=\chi'(h)\chi^{\prime-1}_k(h)\phi(m_{\ell_k})(\chi',\chi'_k).
$$
Hence, $\chi'_1=\chi'\mu_{\ell_k}^{-1}=\chi'_2$. Thus, this implies $m_{\ell_1}=m_{\ell_2}$.
\end{proof}

We also have the following immediate consequence of Corollary \ref{main_cor}:
\begin{Cor}\label{Cor5}
Let $G$ be abelian, and let $i<j$. Then:
\begin{enumerate}
\renewcommand{\labelenumi}{(\roman{enumi})}
\item if $H_i\cap H_j=\{1\}$, then $M_{ij}$ is either $0$ or $\mathbb{F}H_{ij}$, where $H_{ij}\cong H_i\times H_j$.
\item if $H_i\subseteq H_j$ (in particular, $H_i=H_j$), then $M_{ij}$ is a graded $\mathbb{F}H_j$-vector space, and $0\le\dim_{\mathbb{F}H_j}M_{ij}\le|H_i|$.
\end{enumerate}\qed
\end{Cor}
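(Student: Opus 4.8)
The plan is to specialize Corollary \ref{main_cor} to the concrete bimodule $V=M_{ij}=e_iI(X)e_j$, regarded as a $G$-graded $(\mathbb{F}H_i,\mathbb{F}H_j)$-bimodule, and then simply read off the two cases from the dimension bookkeeping. The two inputs I would keep in hand are the identity $\dim_\mathbb{F} D_i=|H_i|$ (noted after Remark \ref{remarkassociatedposet}) and the upper bound $\dim_\mathbb{F} M_{ij}\le\dim_\mathbb{F} D_i\,\dim_\mathbb{F} D_j=|H_i||H_j|$ furnished by Corollary \ref{firstpartthm}. First I would invoke Corollary \ref{main_cor} to write $M_{ij}=\bigoplus_{\ell=1}^{s}\mathbb{F}H_i m_\ell\mathbb{F}H_j$, where, setting $H=H_iH_j$ and $\mathcal{H}=H_i\cap H_j$, each summand is graded-isomorphic to a shift of $\mathbb{F}H$; in particular $\dim_\mathbb{F}(\mathbb{F}H_i m_\ell\mathbb{F}H_j)=|H|=|H_i||H_j|/|\mathcal{H}|$, whence $\dim_\mathbb{F} M_{ij}=s\,|H_i||H_j|/|\mathcal{H}|$.

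For part (i), $\mathcal{H}=\{1\}$, so $|H|=|H_i||H_j|$, and since $G$ is abelian with $H_i\cap H_j=\{1\}$ the product $H=H_{ij}$ is an internal direct product, giving $H_{ij}\cong H_i\times H_j$. Thus every summand already has the maximal possible dimension $|H_i||H_j|$, and the bound from Corollary \ref{firstpartthm} forces $s\,|H_i||H_j|\le|H_i||H_j|$, i.e.\ $s\le 1$. If $s=0$ then $M_{ij}=0$, and if $s=1$ then $M_{ij}=\mathbb{F}H_i m_1\mathbb{F}H_j\cong\mathbb{F}H_{ij}$, which is exactly the asserted dichotomy.

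For part (ii), the inclusion $H_i\subseteq H_j$ yields $\mathcal{H}=H_i$ and $H=H_j$. Via Lemma \ref{Lem2} and Lemma \ref{Lem0}, each summand $\mathbb{F}H_i m_\ell\mathbb{F}H_j$ carries the structure of a one-dimensional graded $\mathbb{F}H_j$-module, so $M_{ij}$ is a graded free $\mathbb{F}H_j$-module of rank $s=\dim_{\mathbb{F}H_j}M_{ij}$, confirming that it is a graded $\mathbb{F}H_j$-vector space. The dimension count $s\,|H_j|=\dim_\mathbb{F} M_{ij}\le|H_i||H_j|$ then gives $0\le s\le|H_i|$; this is also exactly the bound $s_{ij}\le|\mathcal{H}_{ij}|$ recorded in Theorem \ref{Thm3} applied to $\mathcal{H}_{ij}=H_i$.

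The argument is genuinely immediate, so there is no real obstacle; the only point meriting a line of care is the internal-direct-product identification $H=H_iH_j\cong H_i\times H_j$ in (i), which relies on $G$ being abelian so that $H_i$ and $H_j$ commute and $H_i\cap H_j=\{1\}$. Once that identification and the equality $\dim_\mathbb{F}(\mathbb{F}H_i m_\ell\mathbb{F}H_j)=|H|$ are in place, both statements reduce to comparing $s\,|H|$ against the ceiling $|H_i||H_j|$ from Corollary \ref{firstpartthm}.
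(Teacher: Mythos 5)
Your argument is correct and is exactly the route the paper intends: Corollary \ref{Cor5} is stated there as an immediate consequence of Corollary \ref{main_cor} with no written proof, and your specialization (each summand $\mathbb{F}H_im_\ell\mathbb{F}H_j$ has $\mathbb{F}$-dimension $|H_iH_j|=|H_i||H_j|/|H_i\cap H_j|$, compared against the ceiling $\dim_\mathbb{F}M_{ij}\le|H_i||H_j|$ from Corollary \ref{firstpartthm}) is precisely the bookkeeping the authors leave implicit. The only cosmetic remark is that in case (i) the conclusion should really be a shift $(\mathbb{F}H_{ij})^{[\deg m_1]}$ as in Corollary \ref{main_cor}, but that imprecision is already present in the paper's own statement.
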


Given a graded $(\mathbb{F}H_1,\mathbb{F}H_2)$-bimodule $M$ with decomposition given by \eqref{decomp}, denote $[M]=[(\chi_1,h_1),\ldots,(\chi_s,h_s)]$, where $h_\ell=\deg m_\ell$. We finish this section showing that the knowledge of those characters determines the isomorphism classes of the bimodules, or certain triangular algebras.

\begin{Lemma}\label{bimodule_iso}
Let $V$ and $V'$ be $G$-graded $(\mathbb{F}H_1,\mathbb{F}H_2)$-bimodules, and denote $[V]=[(\chi_1,h_1),\ldots,(\chi_s,h_s)]$, $[V']=[(\chi_1',h_1'),\ldots,(\chi_{s'}',h_{s'}')]$. Then, as $G$-graded bimodules, $V\cong V'$ if, and only if $s=s'$ and there exists $\sigma\in S_s$ such that
$$
h_\ell\equiv h_{\sigma(\ell)}'\pmod{H_1H_2},\quad\chi_{\ell}=\chi'_{\sigma(\ell)},\quad\forall\ell=1,2,\ldots,s.
$$
\end{Lemma}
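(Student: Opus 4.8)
The plan is to establish the list $[V]$, up to the stated equivalence, as a complete isomorphism invariant of the graded bimodule, and to prove the two implications separately. Throughout I write $H=H_1H_2$ and $\mathcal{H}=H_1\cap H_2$, and I use the decomposition of Corollary \ref{main_cor}, in which each summand satisfies $\mathbb{F}H_1m_\ell\mathbb{F}H_2\cong(\mathbb{F}H)^{[\deg m_\ell]}$ as a graded vector space and carries the relation $hm_\ell=\chi_\ell(h)m_\ell h$ for $h\in\mathcal{H}$.

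For the sufficiency ($\Leftarrow$), after relabelling via $\sigma$ I may assume $s=s'$, $\chi_\ell=\chi'_\ell$ and $h_\ell\equiv h'_\ell\pmod H$ for every $\ell$. Fixing $\ell$, the congruence yields $g_1\in H_1$, $g_2\in H_2$ with $h_\ell=g_1g_2h'_\ell$; replacing $m'_\ell$ by $g_1m'_\ell g_2$ (which generates the same summand, as $g_1,g_2$ are invertible) I may further assume $\deg m'_\ell=\deg m_\ell=h_\ell$, and a short computation using that $G$ is abelian shows this replacement preserves the character $\chi'_\ell$. Now both $\mathbb{F}H_1m_\ell\mathbb{F}H_2$ and $\mathbb{F}H_1m'_\ell\mathbb{F}H_2$ are, by Lemma \ref{Lem2} and Lemma \ref{Lem0}, cyclic free graded modules over the same algebra $(\mathbb{F}H_1\otimes_{\mathbb{F}\mathcal{H}}\mathbb{F}H_2,\iota_1,\iota_2)\cong\mathbb{F}H$ (the inclusions $\iota_1,\iota_2$ depend only on $\chi_\ell$), generated by homogeneous elements $m_\ell,m'_\ell$ of equal degree and subject to identical relations. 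Hence $g_1m_\ell g_2\mapsto g_1m'_\ell g_2$ is a well-defined graded isomorphism of these summands, and summing over $\ell$ produces the graded bimodule isomorphism $V\cong V'$.

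For the necessity ($\Rightarrow$), I introduce the conjugation action of $\mathcal{H}$ on $V$, namely $h\cdot v=hvh^{-1}$ (i.e.\ $L_hR_{h^{-1}}$ of Lemma \ref{Lem1}). Since $G$ is abelian this map preserves each homogeneous component, and since $\mathcal{H}$ is finite with $\mathrm{char}\,\mathbb{F}\nmid|\mathcal{H}|$ (forced by $\mathbb{F}H_i\cong\mathbb{F}^{|H_i|}$) it is diagonalizable; I therefore decompose $V=\bigoplus_{\chi\in\hat{\mathcal{H}}}V^{(\chi)}$ into isotypic graded sub-bimodules. A direct computation, again using commutativity of $G$, gives $h(g_1m_\ell g_2)h^{-1}=\chi_\ell(h)\,g_1m_\ell g_2$, so the whole summand $\mathbb{F}H_1m_\ell\mathbb{F}H_2$ lies in $V^{(\chi_\ell)}$, whence $V^{(\chi)}=\bigoplus_{\ell:\chi_\ell=\chi}\mathbb{F}H_1m_\ell\mathbb{F}H_2$. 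Using $\mathbb{F}H_1m_\ell\mathbb{F}H_2\cong(\mathbb{F}H)^{[h_\ell]}$ together with the fact that $(\mathbb{F}H)^{[h]}_g\ne0$ exactly when $g\in hH$, I read off, for each $\chi$ and each coset $gH$,
\[
\dim V^{(\chi)}_g=\bigl|\{\ell:\chi_\ell=\chi,\ h_\ell H=gH\}\bigr|.
\]
The left-hand side is invariant under graded bimodule isomorphism, since such an isomorphism is graded and commutes with conjugation, hence carries $V^{(\chi)}_g$ onto ${V'}^{(\chi)}_g$. Thus the multiset $\{(\chi_\ell,h_\ell H)\}$ is a complete invariant, and its equality for $V$ and $V'$ is exactly the asserted existence of $\sigma$.

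The step I expect to require the most care is the necessity direction: verifying that the conjugation action is well-defined, degree-preserving and completely reducible, and that it acts through a single character on an entire summand $\mathbb{F}H_1m_\ell\mathbb{F}H_2$ (all of which hinge on $G$ being abelian and on the characteristic hypothesis), so that the isotypic graded dimensions genuinely recover both $\chi_\ell$ and the class $h_\ell H$. The sufficiency direction is largely bookkeeping once the normalization $\deg m'_\ell=\deg m_\ell$ is arranged, the only subtlety being to confirm that adjusting $m'_\ell$ by the factors $g_1,g_2$ leaves its character unchanged.
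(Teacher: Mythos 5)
Your proof is correct and follows essentially the same route as the paper: both arguments hinge on the conjugation action $L_hR_{h^{-1}}$ of $\mathcal{H}=H_1\cap H_2$, the resulting isotypic decomposition of $V$ into the sub-bimodules $V^{(\chi)}$, and the observation that a graded bimodule isomorphism respects this decomposition and restricts to a graded $\mathbb{F}(H_1H_2)$-module isomorphism on each piece, which recovers the degrees modulo $H_1H_2$. You merely spell out more explicitly the graded-dimension count on each isotypic component and the normalization of generators in the sufficiency direction, both of which the paper leaves as ``immediate.''
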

\begin{proof}
Write $V=\mathbb{F}H_1m_1\mathbb{F}H_2\oplus\cdots\oplus\mathbb{F}H_1m_s\mathbb{F}H_2$, $V'=\mathbb{F}H_1m_1'\mathbb{F}H_2\oplus\cdots\oplus\mathbb{F}H_1m_{s'}'\mathbb{F}H_2$, as in \eqref{decomp}. Let $\psi:V\to V'$ be a graded isomorphism of bimodules. Then $\psi$ is an isomorphism of $H_1\cap H_2$ representations, where the action is given by $L_h\circ R_{h^{-1}}$. Let $V_\chi=\sum_{\chi_i=\chi}\mathbb{F}H_1m_i\mathbb{F}H_2$. Note that $V_\chi$ is the sum of all subrepresentations of $V$ isomorphic to $\chi$. So, $\psi(V_\chi)=\sum_{\chi'_i=\chi}\mathbb{F}H_1m_i'\mathbb{F}H_2=:V_\chi'$. Moreover, $\psi:V_\chi\to V_\chi'$ is a graded isomorphism of $\mathbb{F}(H_1H_2)$-modules. This proves one implication.

The converse is immediate.
\end{proof}

\begin{Lemma}\label{grad_alg_iso}
Let $V$ and $V'$ be $G$-graded $(\mathbb{F}H_1,\mathbb{F}H_2)$-bimodules, and denote $[V]=[(\chi_1,h_1),\ldots,(\chi_s,h_s)]$, $[V']=[(\chi_1',h_1'),\ldots,(\chi_{s'}',h_{s'}')]$. Then, as $G$-graded algebras,
$$
A:=\left(\begin{array}{cc}
\mathbb{F}H_1&V\\
&\mathbb{F}H_2
\end{array}\right)\cong
\left(\begin{array}{cc}
\mathbb{F}H_1&V'\\
&\mathbb{F}H_2
\end{array}\right)=:A'
$$
if and only if $s=s'$ and there exist $\chi\in\widehat{H_1\cap H_2}$ and $\sigma\in S_s$ such that
$$
h_\ell\equiv h_{\sigma(\ell)}'\pmod{H_1H_2},\quad\chi_{\ell}=\chi\chi'_{\sigma(\ell)},\quad\forall\ell=1,2,\ldots,s.
$$
\end{Lemma}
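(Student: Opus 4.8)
The plan is to reduce the classification of graded \emph{algebra} isomorphisms $A\to A'$ to the classification of graded \emph{bimodule} isomorphisms already obtained in Lemma \ref{bimodule_iso}, absorbing the extra freedom of an algebra isomorphism into a pair of character twists of $\mathbb{F}H_1$ and $\mathbb{F}H_2$. First I would dispose of the degenerate cases: if $V=0$ or $V'=0$ then $s=0$ or $s'=0$, and the statement is immediate (if exactly one of them vanishes the two algebras have different dimensions and also $s\ne s'$, so neither side of the equivalence holds). So from now on assume $V,V'\ne0$.

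For the forward direction, let $\Phi\colon A\to A'$ be a graded isomorphism. Since $V^2=0=V'^2$, the radicals are the graded ideals $J(A)=V$, $J(A')=V'$, and $A/J(A)\cong\mathbb{F}H_1\times\mathbb{F}H_2\cong A'/J(A')$. Let $e_1,e_2$ and $e_1',e_2'$ be the diagonal block units of $A$ and $A'$. The images $\Phi(e_1),\Phi(e_2)$ are orthogonal homogeneous idempotents of degree $1$ summing to $1$; reducing modulo $J(A')$, the degree-$1$ component of $A'/J(A')$ is $\mathbb{F}\bar e_1'\oplus\mathbb{F}\bar e_2'\cong\mathbb{F}\times\mathbb{F}$, whose only idempotents are $0,\bar e_1',\bar e_2',1$, so $\overline{\Phi(e_1)}\in\{\bar e_1',\bar e_2'\}$. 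The possibility $\overline{\Phi(e_1)}=\bar e_2'$ is excluded: conjugating by the homogeneous degree-$1$ unit $e_2'\Phi(e_1)+e_1'\Phi(e_2)$ would yield an isomorphism sending $e_1\mapsto e_2'$, hence carrying $V=e_1Ae_2$ onto $e_2'A'e_1'=0$, contradicting $V\ne0$. Thus $\overline{\Phi(e_1)}=\bar e_1'$, and conjugating by the homogeneous degree-$1$ unit $g=e_1'\Phi(e_1)+e_2'\Phi(e_2)$ (whose inverse is $1-(g-1)$, since $g-1\in V'$ and $V'^2=0$) I may assume $\Phi(e_i)=e_i'$. Then $\Phi$ restricts to graded automorphisms $\alpha_i$ of $\mathbb{F}H_i=e_i'A'e_i'$ and to a map $\phi=\Phi|_V$ satisfying $\phi(av)=\alpha_1(a)\phi(v)$ and $\phi(vb)=\phi(v)\alpha_2(b)$.

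Because every homogeneous component of $\mathbb{F}H_i$ is one-dimensional, a graded automorphism fixes each component and hence has the form $\alpha_i(h)=\lambda_i(h)h$ for some character $\lambda_i\in\hat H_i$ (this is the content of Lemma \ref{field}). Equipping $V$ with the twisted actions $a\triangleright v=\alpha_1^{-1}(a)v$, $v\triangleleft b=v\alpha_2^{-1}(b)$ turns $\phi$ into an honest graded $(\mathbb{F}H_1,\mathbb{F}H_2)$-bimodule isomorphism $V^{(\alpha)}\to V'$. Evaluating on the generators $m_\ell$ of Corollary \ref{main_cor} shows that $V^{(\alpha)}$ keeps the generating degrees $h_\ell$ but acquires the characters $\tilde\chi_\ell=\chi_\ell\cdot(\lambda_1^{-1}\lambda_2)|_{\mathcal H}$. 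Feeding this into Lemma \ref{bimodule_iso} gives $s=s'$, a permutation $\sigma$ with $h_\ell\equiv h_{\sigma(\ell)}'\pmod{H_1H_2}$, and $\chi_\ell\cdot(\lambda_1^{-1}\lambda_2)|_{\mathcal H}=\chi_{\sigma(\ell)}'$. Setting $\chi=(\lambda_1\lambda_2^{-1})|_{\mathcal H}\in\widehat{H_1\cap H_2}$ yields exactly $\chi_\ell=\chi\,\chi_{\sigma(\ell)}'$, as required.

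For the converse I would run this computation backwards. Given $\chi$ and $\sigma$, use surjectivity of the restriction $\hat H_1\to\widehat{H_1\cap H_2}$ (every character of $\mathcal H$ extends to $H_1$, as already invoked in Lemma \ref{Lem0}) to choose $\lambda_1\in\hat H_1$ with $\lambda_1|_{\mathcal H}=\chi$, put $\lambda_2=1$, and set $\alpha_i(h)=\lambda_i(h)h$. Then $V^{(\alpha)}$ has invariant $[(\chi_\ell\chi^{-1},h_\ell)]_\ell=[(\chi_{\sigma(\ell)}',h_\ell)]_\ell$, so Lemma \ref{bimodule_iso} furnishes a graded bimodule isomorphism $\phi\colon V^{(\alpha)}\to V'$, equivalently a graded map $\phi\colon V\to V'$ with $\phi(av)=\alpha_1(a)\phi(v)$ and $\phi(vb)=\phi(v)\alpha_2(b)$. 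Assembling $\Phi\bigl(\begin{smallmatrix}a&v\\&b\end{smallmatrix}\bigr)=\bigl(\begin{smallmatrix}\alpha_1(a)&\phi(v)\\&\alpha_2(b)\end{smallmatrix}\bigr)$ gives the desired graded algebra isomorphism, multiplicativity being precisely the two twisted-linearity identities for $\phi$. The main obstacle is the idempotent bookkeeping in the forward direction, namely ruling out the interchange of the two diagonal blocks and conjugating them simultaneously to $e_1',e_2'$ by a \emph{homogeneous} unit; once $\Phi(e_i)=e_i'$ is secured, the remainder is a routine translation of characters through Lemma \ref{bimodule_iso}.
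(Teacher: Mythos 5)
Your proof is correct and follows essentially the same route as the paper's: both reduce the algebra-isomorphism question to Lemma \ref{bimodule_iso} by observing that a graded isomorphism acts on each diagonal block $\mathbb{F}H_i$ by a character $\lambda_i$ and that absorbing this twist shifts every $\chi_\ell$ by the single character $(\lambda_1\lambda_2^{\pm1})|_{H_1\cap H_2}$. The only differences are organizational --- you twist the bimodule structure on $V$ where the paper post-composes with an automorphism of $A'$, and you spell out the idempotent alignment and the degenerate case $V=0$, which the paper leaves implicit.
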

\begin{proof}
The converse is an elementary computation.

Let $\psi:A\to A'$ be a graded isomorphism of algebras. Then we have an induced graded isomorphism $\bar\psi:A/V\to A'/V'$, moreover, $\bar\psi(\mathbb{F}H_i)=\mathbb{F}H_i$. So, there exists $\chi_i\in\hat{H}_i$ such that $\bar\psi(h_i)=\chi_i(h_i)h_i$, for each $h_i\in H_i$, for $i=1,2$. Write $V'=\mathbb{F}H_1m_1'\mathbb{F}H_2\oplus\cdots\oplus\mathbb{F}H_1m_{s'}'\mathbb{F}H_2$, and consider the map $\varphi:A'\to A'$ given by
$$
\varphi\left(\begin{array}{cc}h_1&h_1'm_i'h_2'\\&h_2\end{array}\right)=
\left(\begin{array}{cc}\chi_1^{-1}(h_1)h_1&\chi^{-1}_1(h_1')\chi^{-1}_2(h_2')h_1'm_i'h_2'\\&\chi_2^{-1}(h_2)h_2\end{array}\right).
$$
It is elementary to prove that $\varphi$ is a graded isomorphism of algebras. Moreover, given $h_i\in H_i$ and $m\in V$, we have
$$
\varphi\psi(h_1mh_2)=h_1\varphi\psi(m)h_2.
$$
Thus, $\varphi\psi:V\to\varphi(V')$ is a graded isomorphism of bimodules. Consider the product of the restriction of the characters, $\chi=\chi_1|_{H_1\cap H_2}\cdot\chi_2|_{H_1\cap H_2}$. Note that, if $[V']=[(\chi_1',h_1'),\ldots,(\chi_{s'}',h_{s'}')]$, then the decomposition of $\varphi(V')$ is $[\varphi(V')]=[(\chi\chi_1',h_1'),\ldots,(\chi\chi_{s'}',h_{s'}')]$. Thus, the result follows from Lemma \ref{bimodule_iso}.
\end{proof}

\section{The isomorphism problem\label{S_isoproblem}}
Assume that the conditions of Theorem \ref{class} hold. Consider a $G$-grading on $I(X)$ and let $\mathcal{E}$ be the associated poset. Denote by $e\triangleleft_ce'$ if $e,e'\in\mathcal{E}$, and $e'$ is a cover for $e$; that is, $e\ne e'$, $e\trianglelefteq e'$, and if $e\trianglelefteq f\trianglelefteq e'$, then either $f=e$ or $f=e'$.

Note that the Jacobson Radical $J=J(I(X))$ coincides with $\sum_{e\trianglelefteq e',e\ne e'}eI(X)e'$. Also, $J^2$ is the sum of all $eI(X)e''$ where there exists a chain of different elements $e\triangleleft e'\triangleleft e''$; $J^3$ is the sum of all $eI(X)e'''$, where $e\triangleleft e'\triangleleft e''\triangleleft e'''$, and so on. Given $e,e'\in\mathcal{E}$, $e\trianglelefteq e'$, we can always construct a chain of elements containing only subsequent covers, starting with $e$, and ending with $e'$; however, we can have two different such chains having different number of elements. Nonetheless, if $e\triangleleft_ce'$, then $e\triangleleft e'$ is the unique chain of subsequent covers linking $e$ and $e'$. Let $J_1=\bigoplus_{e\triangleleft_ce'}eI(X)e'$. The previous discussion is the proof of the following:
\begin{Lemma}\label{lem_j1}
Let $I(X)$ be $G$-graded, and let $J_1$ be as above.
\begin{enumerate}
\renewcommand{\labelenumi}{(\roman{enumi})}
\item The restriction of the natural projection $\pi:J_1\to J/J^2$ is a graded linear isomorphism.
\item $J=J_1\oplus\sum_{m>1}J_1^m$.
\end{enumerate}\qed
\end{Lemma}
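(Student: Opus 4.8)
The plan is to deduce both assertions from the block descriptions of the powers of $J$ recorded in the paragraph preceding the lemma, supplemented by one bootstrapping identity and the nilpotency of $J$. Throughout I would work with the graded vector space decomposition $I(X)=\bigoplus_{i,j}e_iI(X)e_j$ into blocks $M_{ij}=e_iI(X)e_j$, relative to which $J$, $J^2$ and $J_1$ are all graded and are genuine direct sums of blocks: $J=\bigoplus_{e\triangleleft e'}eI(X)e'$, the space $J^2=\bigoplus eI(X)e''$ summed over the pairs $(e,e'')$ admitting an intermediate $e\triangleleft e'\triangleleft e''$, and $J_1=\bigoplus_{e\triangleleft_ce'}eI(X)e'$. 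Since every map and space in sight is graded, it suffices to verify the statements as plain vector space identities, the gradedness then being automatic.

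For (i), I would first note that the natural projection $J\to J/J^2$ has kernel exactly $J^2$, so $\ker(\pi|_{J_1})=J_1\cap J^2$. If $e\triangleleft_ce'$ is a cover there is by definition no $e''$ with $e\triangleleft e''\triangleleft e'$, hence the block $eI(X)e'$ is not one of the blocks constituting $J^2$; as these blocks are pairwise disjoint summands of $J$, this forces $J_1\cap J^2=0$, giving injectivity. For surjectivity I would argue block by block: a comparable pair $e\triangleleft e'$ is either a cover, whence $eI(X)e'\subseteq J_1$, or it admits an intermediate, whence $eI(X)e'\subseteq J^2$; thus $J=J_1+J^2$ and the image of $J_1$ is all of $J/J^2$. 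Injectivity and surjectivity make $\pi|_{J_1}$ a graded linear isomorphism.

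For the spanning half of (ii), I would establish the identity $J^m=J_1^m+J^{m+1}$ by induction on $m$. The base case $m=1$ is exactly the equality $J=J_1+J^2$ obtained in (i). For the inductive step I would substitute $J=J_1+J^2$ and the hypothesis $J^m=J_1^m+J^{m+1}$ into $J^{m+1}=J\cdot J^m$, expand the four products, and absorb each of the three cross terms into $J^{m+2}$, leaving $J^{m+1}=J_1^{m+1}+J^{m+2}$. Because $J$ is the Jacobson radical of a finite-dimensional algebra it is nilpotent, say $J^{N+1}=0$; telescoping the identity then yields $J=\sum_{m=1}^{N}J_1^m=\sum_{m\ge1}J_1^m$, which is the required spanning $J=J_1+\sum_{m>1}J_1^m$.

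The directness of this binary sum rests on the observation that every $J_1^m$ with $m>1$ is a product of at least two factors from $J_1\subseteq J$, so $\sum_{m>1}J_1^m\subseteq J^2$; together with $J_1\cap J^2=0$ from (i) this gives $J_1\cap\sum_{m>1}J_1^m=0$ and hence $J=J_1\oplus\sum_{m>1}J_1^m$. The one point I would flag as delicate, and which is the real obstacle within the assembly, is that the $\oplus$ here is this binary splitting and not a finer decomposition $\bigoplus_mJ_1^m$: as remarked before the lemma, two chains of consecutive covers joining the same pair may have different lengths, so distinct powers $J_1^m$ and $J_1^{m'}$ need not be independent. The argument circumvents this by only peeling $J_1$ off from everything of strictly higher degree, precisely where the clean fact $J_1\cap J^2=0$ applies; the deeper input, that $J^2$ is the full sum of the intermediate-admitting blocks, is the content supplied by the preceding discussion and is what I would lean on for the block-level claims above.
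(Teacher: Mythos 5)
Your write-up is a faithful expansion of the paper's own justification (the paper offers the paragraph preceding the lemma as the entire proof), and the ingredients you add yourself --- the injectivity $J_1\cap J^2=0$ from disjointness of blocks, the bootstrapping identity $J^m=J_1^m+J^{m+1}$, nilpotency of $J$, and the careful explanation of why the sum in (ii) is only a binary direct sum --- are all correct. The genuine gap is exactly at the step you choose to ``lean on,'' namely that $J^2$ equals the \emph{full} sum of the blocks $eI(X)e''$ admitting an intermediate $e\triangleleft e'\triangleleft e''$; this is what carries the surjectivity of $\pi|_{J_1}$, equivalently $J=J_1+J^2$. What is immediate is only the containment $J^2=\sum_{e\triangleleft e'\triangleleft e''}\bigl(eI(X)e'\bigr)\bigl(e'I(X)e''\bigr)\subseteq\sum eI(X)e''$, which suffices for $J_1\cap J^2=0$ but not for $J=J_1+J^2$. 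For the reverse containment one needs: whenever $e\triangleleft e''$ is not a cover in $\mathcal{E}$ and $x\in\mathscr{D}(e)$, $z\in\mathscr{D}(e'')$ satisfy $x<z$, there exists $y\in X$ with $x<y<z$. Lemma \ref{link} guarantees that $x$ lies below \emph{some} $y\in\mathscr{D}(e')$ and that \emph{some} $y'\in\mathscr{D}(e')$ lies below $z$, but not that a single $y$ does both; so a matrix unit $e_{xz}$ can be a covering relation of $X$ sitting inside a non-cover block of $\mathcal{E}$, hence lie in neither $J_1$ nor $J^2$.

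This is not a hypothetical worry. Take $X=\{x_1,x_2,y_1,y_2,z_1,z_2\}$ with covering relations $x_i<y_i<z_i$ for $i=1,2$, together with $x_1<z_2$ and $x_2<z_1$, and grade $I(X)$ by $\mathbb{Z}_2$ via the eigenspaces of the order-two poset automorphism interchanging the two elements in each of $\{x_1,x_2\}$, $\{y_1,y_2\}$, $\{z_1,z_2\}$ (assume $\mathrm{char}\,\mathbb{F}\ne2$). One checks that the minimal homogeneous idempotents are the three orbit sums, so $\mathcal{E}$ is a three-element chain $e\triangleleft f\triangleleft e''$ and $J_1=eI(X)f\oplus fI(X)e''$ is $4$-dimensional, whereas $J^2=\mathrm{Span}\{e_{x_1z_1},e_{x_2z_2}\}$ and $\dim J/J^2=6$: the units $e_{x_1z_2},e_{x_2z_1}$ lie in the non-cover block $eI(X)e''$ but not in $J^2$. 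Thus the identification of $J_1$ with $\mathrm{Span}\{e_{xy}\mid y \text{ covers } x \text{ in } X\}$ --- for which the assertions of the lemma do hold, by the standard description of the powers of the radical of an incidence algebra --- requires that covers in $X$ correspond to covers in $\mathcal{E}$, and this must either be proved under additional hypotheses or the definition of $J_1$ must be adjusted. Your argument, like the paper's, does not close this point.
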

It should be noticed that $J_1=\mathrm{Span}\{e_{xy}\mid x\le_{Xc}y\}$, where $\le_X$ is the partial order of $X$.

As a consequence, we obtain
\begin{Cor}
Any $G$-grading on $I(X)$ is completely determined by either one of the following graded spaces:
\begin{enumerate}
\renewcommand{\labelenumi}{(\roman{enumi})}
\item $D=\bigoplus_{e\in\mathcal{E}}eI(X)e$, and $J_1=\bigoplus_{e\triangleleft_ce'}eI(X)e'$, or,
\item $I(X)/J$ and $J/J^2$, or,
\item $I(X)/J^2$.
\end{enumerate}
\end{Cor}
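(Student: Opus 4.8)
The plan is to reconstruct the whole decomposition $I(X)=\bigoplus_{g\in G}A_g$ from each of the three data sets, exploiting that the multiplication of $I(X)$ is fixed while only the grading varies. First I would record the ambient graded Peirce decomposition: since each minimal idempotent is homogeneous of degree $1$, every component $e_iI(X)e_j$ is a graded subspace; as $e_1,\dots,e_t$ is a complete system of orthogonal idempotents, this yields $I(X)=D\oplus J$ as graded spaces, with $D=\bigoplus_{e\in\mathcal{E}}eI(X)e$ the diagonal part and $J=\sum_{e\trianglelefteq e',\,e\ne e'}eI(X)e'$ the radical (graded by Lemma \ref{radical_graded}).

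For (i) I would invoke Lemma \ref{lem_j1}(ii), which gives the direct decomposition $J=J_1\oplus\sum_{m>1}J_1^m$. For each $m$, the subspace $J_1^m$ is by definition the span of $m$-fold products of elements of $J_1$, so it is determined by $J_1$ as a subspace through the fixed incidence multiplication; and its grading is forced by that of $J_1$ via
\[
(J_1^m)_g=\sum_{g_1\cdots g_m=g}(J_1)_{g_1}\cdots(J_1)_{g_m}.
\]
Hence the grading on $J$ is determined by the grading on $J_1$, and, combined with the grading on $D$, it determines the grading on $I(X)=D\oplus J$.

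For (ii) I would argue that this data carries the same information as in (i): the graded algebra projection $I(X)\to I(X)/J$ restricts to a graded isomorphism $D\xrightarrow{\sim}I(X)/J$ (because $D$ is a graded algebra complement of $J$), while Lemma \ref{lem_j1}(i) gives a graded linear isomorphism $J_1\xrightarrow{\sim}J/J^2$. Thus knowing $I(X)/J$ and $J/J^2$ amounts to knowing $D$ and $J_1$, and (i) applies. For (iii) I would use that, for the finite-dimensional algebra $I(X)/J^2$, one has $J(I(X)/J^2)=J/J^2$ (it is square-zero, with semisimple quotient), so $J/J^2$ is recovered as the radical of $I(X)/J^2$ and $I(X)/J\cong(I(X)/J^2)\big/(J/J^2)$; since all maps are graded, (iii) reduces to (ii).

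The hard part will be pinning down the precise sense in which the grading on the higher products $J_1^m$ is \emph{forced} rather than chosen: one must be sure that the homogeneous components of each $J_1^m$ are genuinely computed from those of $J_1$ by the fixed multiplication, with no additional freedom entering. This is exactly what Lemma \ref{lem_j1} secures, guaranteeing both that $J_1$ generates $J$ and that the sum $J=J_1\oplus\sum_{m>1}J_1^m$ is direct; the remaining identifications for (ii) and (iii) are then routine bookkeeping.
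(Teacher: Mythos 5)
Your proposal is correct and follows essentially the same route as the paper: prove (i) by using Lemma \ref{lem_j1}(ii) to reconstruct the grading on $J$ from that of $J_1$ via products under the fixed multiplication, and then deduce (ii) and (iii) from (i) through the graded identifications $D\cong I(X)/J$, $J_1\cong J/J^2$, and $J(I(X)/J^2)=J/J^2$. The paper states (ii) and (iii) as immediate consequences, whereas you spell out the bookkeeping, but the substance is identical.
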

\begin{proof}
We prove (i), the others being consequences. Let $x\in I(X)$ be a homogeneous element. Then $x=d+r$, where $d\in D$ and $r\in J$ are both homogeneous elements.  By Lemma \ref{lem_j1}.(ii), $r$ is a linear combination of products of elements of $J_1$. Hence, the element $x$ is completely determined by the homogeneous elements in $D$ and in $J_1$.
\end{proof}
The advantage of (ii) and (iii) above is an assertion independent of $\mathcal{E}$. We can restate the previous corollary in terms of isomorphisms:
\begin{Cor}\label{map_iso}
Consider two $G$-gradings on $I(X)$, and denote them by $A_1$ and $A_2$, and let $\psi:A_1\to A_2$ be a graded homomorphism of algebras. The following assertions are equivalent:
\begin{enumerate}
\renewcommand{\labelenumi}{(\roman{enumi})}
\item $\psi$ is a graded isomorphism,
\item the induced maps by $\psi$ on $A_1/J(A_1)\to A_2/J(A_2)$ and $J(A_1)/J(A_1)^2\to J(A_2)/J(A_2)^2$ are graded isomorphisms,
\item the induced map by $\psi$ on $A_1/J(A_1)^2\to A_2/J(A_2)^2$ is a graded isomorphism.
\end{enumerate}\qed
\end{Cor}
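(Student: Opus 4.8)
The plan is to first verify that the induced maps in (ii) and (iii) even make sense, namely that $\psi(J(A_1))\subseteq J(A_2)$. Since $A_1$ and $A_2$ coincide with $I(X)$ as ungraded algebras, their radical $J$ is a nilpotent ideal, so every element of $J(A_1)$ is nilpotent. Composing $\psi$ with the projection onto $A_2/J(A_2)$---which is commutative and reduced, being isomorphic to $\bigoplus_i D_i\cong\mathbb{F}^n$ by Corollary~\ref{firstpartthm}---I would observe that each nilpotent element of $J(A_1)$ is sent to a nilpotent element of a reduced algebra, hence to $0$. Therefore $\psi(J(A_1))\subseteq J(A_2)$, and consequently $\psi(J(A_1)^2)\subseteq J(A_2)^2$, so all the induced graded maps on the quotients are well defined.

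Next I would dispatch the routine implications. For (i)$\Rightarrow$(ii), a bijective graded algebra homomorphism admits a graded inverse, so applying the previous paragraph to $\psi$ and to $\psi^{-1}$ yields $\psi(J(A_1))=J(A_2)$ and $\psi(J(A_1)^2)=J(A_2)^2$, whence the induced maps on $A_i/J(A_i)$ and $J(A_i)/J(A_i)^2$ are graded isomorphisms. For (iii)$\Rightarrow$(ii), in $A_i/J(A_i)^2$ the ideal $J(A_i)/J(A_i)^2$ is square-zero with semisimple quotient $A_i/J(A_i)$, hence it is exactly the radical; an algebra isomorphism $A_1/J(A_1)^2\to A_2/J(A_2)^2$ must preserve the radical, thereby inducing graded isomorphisms on $J(A_i)/J(A_i)^2$ and on $A_i/J(A_i)$. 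Conversely, (ii)$\Rightarrow$(iii) is the five lemma applied to the graded short exact sequences $0\to J(A_i)/J(A_i)^2\to A_i/J(A_i)^2\to A_i/J(A_i)\to0$, whose outer vertical maps are isomorphisms by hypothesis; the resulting bijective graded algebra homomorphism is a graded isomorphism.

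The essential step, and the one I expect to be the main obstacle, is (ii)$\Rightarrow$(i). Writing $J_i=J(A_i)$, surjectivity of the two induced maps gives $\psi(A_1)+J_2=A_2$ and $\psi(J_1)+J_2^2=J_2$. Exploiting that $\psi$ is multiplicative, I would prove by induction that $\psi(J_1^k)+J_2^{k+1}=J_2^k$ for all $k\ge1$: multiplying the $k$-th identity by $J_2=\psi(J_1)+J_2^2$ and using $\psi(J_1^k)\psi(J_1)=\psi(J_1^{k+1})$ together with $\psi(J_1^k)J_2^2,\ J_2^{k+1}\psi(J_1)\subseteq J_2^{k+2}$ yields the $(k+1)$-th identity. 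Since $J_2$ is nilpotent, iterating $J_2=\psi(J_1)+J_2^2\subseteq\psi(J_1)+J_2^3\subseteq\cdots$ collapses to $J_2=\psi(J_1)$; this is the quantitative form of the fact (Lemma~\ref{lem_j1}) that $J$ is generated as a non-unital algebra by $J_1\cong J/J^2$. Combined with $\psi(A_1)+J_2=A_2$, this forces $\psi(A_1)=A_2$, so $\psi$ is onto. As $\dim A_1=\dim A_2<\infty$, surjectivity forces bijectivity, and a bijective graded algebra homomorphism is automatically a graded isomorphism. The delicate point is organizing the filtration induction so that multiplicativity of $\psi$ is used correctly; once $\psi(J_1)=J_2$ is secured, the remaining steps are formal.
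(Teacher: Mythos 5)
Your proof is correct and follows essentially the same route as the paper: the paper leaves the corollary as a restatement of the preceding one, whose content is Lemma \ref{lem_j1} (the radical is generated by a lift of $J/J^2$), and your filtration induction $\psi(J(A_1)^k)+J(A_2)^{k+1}=J(A_2)^k$ is exactly a self-contained, quantitative version of that fact, as you yourself note. The well-definedness of the induced maps and the remaining implications are handled correctly.
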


Now, we will prove a necessary condition so that two gradings on $I(X)$ are isomorphic:
\begin{Prop}\label{prop_iso}
Let $I(X)$ be endowed with two $G$-gradings, say $A$ and $A'$, and assume that
$$
A=\left(\begin{array}{cccc}
\mathbb{F}H_1&M_{12}&\ldots&M_{1t}\\
&\ddots&\ddots&\vdots\\
&&\mathbb{F}H_{t-1}&M_{t-1,t}\\
&&&\mathbb{F}H_t
\end{array}\right), A'=
\left(\begin{array}{cccc}
\mathbb{F}H_1'&M_{12}'&\ldots&M_{1t'}'\\
&\ddots&\ddots&\vdots\\
&&\mathbb{F}H_{t'-1}'&M_{t'-1,t'}'\\
&&&\mathbb{F}H_{t'}'
\end{array}\right),
$$
Let $\mathcal{E}$ and $\mathcal{E}'$ be the respective associated posets (see Remark \ref{remarkassociatedposet}). If $A\cong A'$ then $t=t'$, and there exists an isomorphism of posets $\alpha:\mathcal{E}\to\mathcal{E}'$ such that $H_i=H_{\alpha(i)}$, for each $i=1,2,\ldots,t$
\end{Prop}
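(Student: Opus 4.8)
The plan is to extract the combinatorial skeleton $\mathcal{E}$ from each graded algebra using the ring-theoretic invariants $I(X)/J$ and $J/J^2$, which are preserved under any graded isomorphism. First I would use Corollary~\ref{map_iso}: a graded isomorphism $\psi:A\to A'$ induces graded isomorphisms $\bar\psi:A/J\to A'/J'$ and $\psi_1:J/J^2\to J'/J'^2$. The quotient $A/J$ is the semisimple part, which by Theorem~\ref{class} decomposes as $\mathbb{F}H_1\times\cdots\times\mathbb{F}H_t$ (a product of commutative graded-division algebras), and similarly $A'/J'\cong\mathbb{F}H_1'\times\cdots\times\mathbb{F}H_{t'}'$. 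Since $\bar\psi$ is a graded isomorphism of these commutative semisimple graded algebras, it must permute the indecomposable factors; this forces $t=t'$ and yields a bijection (a candidate for the underlying map of $\alpha$) matching each factor $\mathbb{F}H_i$ with some factor $\mathbb{F}H_{\alpha(i)}'$. Because $\bar\psi$ restricts to a graded algebra isomorphism on matched factors, we get $\mathbb{F}H_i\cong\mathbb{F}H_{\alpha(i)}'$ as graded algebras, which gives $H_i=H_{\alpha(i)}'$ (the support of a graded-division group algebra is exactly the group, so equality of supports gives equality of the subgroups of $G$).

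Next I would verify that $\alpha$ is an isomorphism of posets. The key point is that the partial order on $\mathcal{E}$ is encoded algebraically: by Remark~\ref{remarkassociatedposet}, $i\trianglelefteq j$ with $i\ne j$ iff $M_{ij}\ne0$, and $M_{ij}=e_iI(X)e_j$ is exactly the piece of $J$ connecting the idempotent blocks $e_i,e_j$. More precisely, the covering relation $\triangleleft_c$ is detected by $J/J^2$: by Lemma~\ref{lem_j1}(i), $J_1=\bigoplus_{e\triangleleft_c e'}e_iI(X)e_j$ maps isomorphically onto $J/J^2$, and each summand is a nonzero $(\mathbb{F}H_i,\mathbb{F}H_j)$-bimodule sitting between the matched semisimple factors. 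Since $\psi_1$ is a graded isomorphism compatible with the bimodule actions of $A/J$ and $A'/J'$ (which are identified via $\bar\psi$ and hence via $\alpha$), the nonzero summand indexed by $(i,j)$ must be carried to the nonzero summand indexed by $(\alpha(i),\alpha(j))$. Therefore $i\triangleleft_c j$ in $\mathcal{E}$ iff $\alpha(i)\triangleleft_c\alpha(j)$ in $\mathcal{E}'$; since a finite poset is determined by its covering relation, $\alpha$ preserves $\trianglelefteq$ in both directions and is a poset isomorphism.

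The main obstacle I anticipate is the bookkeeping that ties the factor-permutation coming from $A/J$ to the cover-matching coming from $J/J^2$: one must check that the \emph{same} bijection $\alpha$ simultaneously matches the semisimple factors and respects the bimodule structure of $J/J^2$. This is where compatibility is essential — the $(A/J)$-bimodule structure on $J/J^2$ is intertwined by $\psi$, so the identification of left/right actions forced by $\bar\psi$ is precisely what guarantees that $\psi_1$ sends the block $e_i(J/J^2)e_j$ into $e_{\alpha(i)}'(J'/J'^2)e_{\alpha(j)}'$. Once this compatibility is recorded, the conclusion $H_i=H_{\alpha(i)}'$ together with preservation of the covering relation is immediate. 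I would organize the write-up so that the factor-matching is fixed first from $\bar\psi$, and then all statements about $J/J^2$ use that already-fixed $\alpha$, rather than producing two a priori unrelated bijections.
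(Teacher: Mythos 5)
Your proposal is correct and follows essentially the same route as the paper: pass to the induced graded isomorphisms on $A/J$ and $J/J^2$, match the graded-simple factors $\mathbb{F}H_i$ to obtain $t=t'$, the permutation $\alpha$, and $H_i=H_{\alpha(i)}$ via supports, and then use the compatibility of the $(A/J)$-bimodule structure on $J/J^2$ to show $\alpha$ carries covers to covers, hence is a poset isomorphism. The paper's proof is just a more compressed version of the same argument (it records $\bar\psi(eAe'+J^2)=\alpha(e)A'\alpha(e')$ and applies the symmetric argument to $\alpha^{-1}$), so no further comparison is needed.
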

\begin{proof}
Let $\psi:A\to A'$ be a graded isomorphism. Then we have an induced graded isomorphism $\psi:A/J(A)\to A'/J(A')$. So, $t=t'$ and $\mathbb{F}H_i\cong\mathbb{F}H_{\alpha(i)}$, for all $i=1,2,\ldots,t$, and some permutation $i\mapsto\alpha(i)$. But $\mathbb{F}H_i\cong\mathbb{F}H_{\alpha(i)}$ as graded algebras implies $H_i=H_{\alpha(i)}$.

Since $\psi(J(A))=J(A')$ and $\psi(J(A)^2)=J(A')^2$, we have an induced graded isomorphism $\bar\psi:J(A)/J(A)^2\to J(A')/J(A')^2$. Also, given a cover $e,e'\in\mathcal{E}$, that is, $e\trianglelefteq_c e'$, we have $\psi(eAe')\ne0$. Moreover, $\bar\psi(eAe'+J(A)^2)=\alpha(e)A'\alpha(e')$. Hence, $e\trianglelefteq e'$ implies $\alpha(e)\trianglelefteq'\alpha(e')$. Using a symmetric argument with $\alpha^{-1}$, we conclude that $\alpha$ is an isomorphism of posets.
\end{proof}
\begin{Example}
Let $UT_n$ be $G$-graded. In this case, $UT_n=I(X)$, where $X$ is a chain of $n$ elements. We saw that $\mathcal{E}$ is constructed as a set of subsets of $X$; and every element of $\mathcal{E}$ contains pairwise non-comparable elements of $X$. Since $X$ is a chain, it implies that every element of $\mathcal{E}$ is a singleton, that is, $\{x\}$, for some $x\in X$. Thus, $\mathcal{E}\simeq X$. Hence, $I(X)/J(I(X))$ always has the trivial grading. This fact (with no restrictions on the ground field) was originally proved by Valenti and Zaicev in 2007 \cite{VaZa2007}.
\end{Example}

\begin{Example}
Let $\Gamma_1$ and $\Gamma_2$ be two $G$-gradings on $UT_n$, and name $A_1=(UT_n,\Gamma_1)$ and $A_2=(UT_n,\Gamma_2)$. By the previous example, $\mathcal{E}_1\simeq\mathcal{E}_2\simeq X$ is a chain of $n$ elements. By Proposition \ref{prop_iso}, $\Gamma_1\cong\Gamma_2$ only if $e_{ii}A_1 e_{i+1,i+1}\cong e_{ii}A_2 e_{i+1,i+1}$, for all $i=1,2,\ldots,n-1$. Equivalently, $\Gamma_1\cong\Gamma_2$ only if $\deg_{\Gamma_1}e_{i,i+1}=\deg_{\Gamma_2}e_{i,i+1}$ for all $i=1,2,\ldots,n-1$. The converse is immediate. This fact (indeed, a stronger statement) was originally proved by Di Vincenzo et al in 2004 \cite{VinKoVa2004}.
\end{Example}

\section{Abelian grading group\label{S_abelian}}
In this section, we investigate properties of the gradings on $I(X)$, if the grading group is abelian. Let $G$ be an abelian group. Consider any $G$-grading on $I(X)$, where $X$ is a finite poset, written as Theorem \ref{class}. For any $m\in I(X)$, we denote $s(m)=\{(u,v)\in X^2\mid m(u,v)\ne0\}$. We say that $m$ is \emph{$G$-primitive} if $m$ is homogeneous and there is no homogeneous $y\in I(X)$ with $s(y)\subsetneq s(x)$.

Let $\bar{\mathbb{F}}$ be an algebraically closure of $\mathbb{F}$, and consider $\bar I(X)=I(X)\otimes_\mathbb{F}\bar{\mathbb{F}}$ with the induced $G$-grading. In this case, if the characteristic of $\mathbb{F}$ is adequate, it is well-known that we have a duality between $\hat G$-action and $G$-gradings (see, for instance, \cite[\S 1.4]{EldKoc}). The automorphism group of $I(X)$ is $\mathrm{Aut}(I(X))=\mathrm{Int}(I(X))\cdot\mathrm{mult}(I(X))\cdot\mathrm{Aut}(X)$ (see, for instance, \cite[Theorem 7.3.6]{SpieDon}), where $\mathrm{Int}(I(X))$ is the group of inner automorphisms of $I(X)$, $\mathrm{mult}(I(X))$ is the group of the so-called multiplicative automorphisms of $I(X)$ and $\mathrm{Aut}(X)$ are the induced automorphisms from the automorphisms of the poset $X$.

\begin{Lemma}
If $I(X)$ is expressed as in Theorem \ref{class}, then $\hat G\subset\mathrm{mult}(I(X))\mathrm{Aut}(X)$.
\end{Lemma}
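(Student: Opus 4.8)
The plan is to use the $\hat G$-duality recalled just above the statement: each $\chi\in\hat G$ defines an automorphism $\phi_\chi\in\mathrm{Aut}(\bar I(X))$ acting on a homogeneous element $x$ by $\phi_\chi(x)=\chi(\deg x)\,x$, and the goal is to factor $\phi_\chi$ as a product of an element of $\mathrm{Aut}(X)$ and a multiplicative automorphism, so that no inner factor occurs. First I would record that $\phi_\chi$ fixes each minimal idempotent $e_i$, because $e_i$ is an idempotent and hence homogeneous of degree $1$; consequently $\phi_\chi$ restricts to an algebra automorphism of each $D_i=e_i\bar I(X)e_i$.

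Next I would analyze this restriction. By Corollary \ref{noncomparable} the set $\mathscr{D}(e_i)$ is an antichain, so $D_i=\bigoplus_{x\in\mathscr{D}(e_i)}\bar{\mathbb{F}}\,e_{xx}\cong\bar{\mathbb{F}}^{|H_i|}$ is a split commutative semisimple algebra whose unique complete set of primitive orthogonal idempotents is exactly $\{e_{xx}\mid x\in\mathscr{D}(e_i)\}$. Since any automorphism of such an algebra permutes its primitive idempotents, $\phi_\chi$ permutes the $e_{xx}$ within each $\mathscr{D}(e_i)$; as the $\mathscr{D}(e_i)$ partition $X$, this defines a permutation $\pi\colon X\to X$ with $\phi_\chi(e_{xx})=e_{\pi(x)\pi(x)}$ for all $x\in X$. (Concretely, $\pi$ is the translation on $\mathscr{D}(e_i)\cong\hat H_i$ by the character $\chi|_{H_i}$, via Lemma \ref{field}, but I would not need this explicit description.)

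Then I would check that $\pi$ is a poset automorphism: for $x,y\in X$ one has $x\le y$ iff $e_{xx}\bar I(X)e_{yy}\ne0$, and applying the algebra automorphism $\phi_\chi$ gives $e_{xx}\bar I(X)e_{yy}\ne0\iff e_{\pi(x)\pi(x)}\bar I(X)e_{\pi(y)\pi(y)}\ne0$, i.e. $x\le y\iff\pi(x)\le\pi(y)$. Let $\rho_\pi\in\mathrm{Aut}(X)$ be the induced automorphism, $\rho_\pi(e_{xy})=e_{\pi(x)\pi(y)}$. Then $\rho_\pi^{-1}\phi_\chi$ fixes every diagonal idempotent $e_{xx}$; since $e_{xx}\bar I(X)e_{yy}=\bar{\mathbb{F}}e_{xy}$ is one-dimensional, it must send $e_{xy}\mapsto\lambda_{xy}e_{xy}$, and the scalars $\lambda_{xy}$ satisfy the cocycle identity $\lambda_{xy}\lambda_{yz}=\lambda_{xz}$ precisely because $\rho_\pi^{-1}\phi_\chi$ is an algebra homomorphism; that is, $\rho_\pi^{-1}\phi_\chi\in\mathrm{mult}(\bar I(X))$. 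Finally, since a poset automorphism conjugates a multiplicative automorphism to another one, $\mathrm{Aut}(X)$ normalizes $\mathrm{mult}(\bar I(X))$, so $\phi_\chi=\rho_\pi\,(\rho_\pi^{-1}\phi_\chi)\in\mathrm{Aut}(X)\,\mathrm{mult}(\bar I(X))=\mathrm{mult}(\bar I(X))\,\mathrm{Aut}(X)$, as required.

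The main obstacle I anticipate lies in the two identifications: first, justifying that $\phi_\chi$ genuinely permutes the \emph{specific} idempotents $e_{xx}$, which rests on $D_i$ being split and diagonal, hence on Corollary \ref{noncomparable}; and second, confirming that the resulting permutation respects the order not merely inside each block $\mathscr{D}(e_i)$ but globally between distinct blocks, which is what promotes $\pi$ from a bijection to an element of $\mathrm{Aut}(X)$. A minor additional care is the passage to $\bar{\mathbb{F}}$, needed so that the $\hat G$-action is defined by genuine automorphisms, together with the rearrangement of the two factors via the normalization of $\mathrm{mult}(\bar I(X))$ by $\mathrm{Aut}(X)$.
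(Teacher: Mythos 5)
Your proof is correct and follows the same overall strategy as the paper's: extract a permutation $\pi$ of $X$ from the action of $\chi$ on the diagonal matrix units, check that it respects the order, and observe that the residual automorphism fixes every $e_{xx}$ and is therefore multiplicative. The one step where you diverge is the justification that $\phi_\chi(e_{xx})$ is again a diagonal matrix unit: the paper deduces this from the structure theorem $\mathrm{Aut}(I(X))=\mathrm{Int}(I(X))\cdot\mathrm{mult}(I(X))\cdot\mathrm{Aut}(X)$ together with the invariance of each block $D_i$, whereas you argue directly that $D_i\cong\bar{\mathbb{F}}^{|H_i|}$ (via Corollary \ref{noncomparable}) and that an algebra automorphism of a split commutative semisimple algebra must permute its primitive idempotents; likewise you verify the multiplicative (cocycle) property of $\rho_\pi^{-1}\phi_\chi$ by hand where the paper cites \cite[Proposition 7.3.2]{SpieDon}. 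Your version is slightly more self-contained and does not presuppose the decomposition of the automorphism group at that intermediate stage; the paper's is shorter because it leans on the cited structure results. Both arguments are sound, and your explicit remarks about passing to $\bar{\mathbb{F}}$ and about $\mathrm{Aut}(X)$ normalizing $\mathrm{mult}(\bar I(X))$ correctly handle the two points the paper leaves implicit.
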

\begin{proof}
Let $f\in\hat G$, and let $x\in X$. Then there is unique minimal idempotent $e$ such that $x\in\mathscr{D}(e)$. Thus, denoting by $D=eI(X)e=\mathrm{Span}\{e_{yy}\mid y\in\mathscr{D}(e)\}$, we have $e_{xx}\in D$. Hence, $e_{xx}$ is a linear combination of homogeneous element in $D$. Since $f(D)=D$, $f(e_{xx})$ is still a linear combination of homogeneous elements in $D$. However, since $f$ is a composition of an inner automorphism, multiplicative automorphism, and an induced automorphism of the poset, actually we have $f(e_{xx})=e_{yy}$, for some $y\in\mathscr{D}(e)$.

Since this construction works for all $x\in X$ and $f$ is bijective, define $\alpha_f:X\to X$ as the (automorphism) $\alpha_f(x)=y$. Denote the induced automorphism on $I(X)$ by the same name. Then, clearly $f\circ\alpha_f^{-1}(e_{xx})=e_{xx}$, for all $x\in X$. By \cite[Proposition 7.3.2]{SpieDon}, $f\circ\alpha_f^{-1}\in\mathrm{mult}(I(X))$.
\end{proof}

Let $\mathscr{A}=\{\alpha_f\mid f\in\hat G\}$ as above, and let $e_1,\ldots,e_t$ be the minimal idempotents. Let $x\in\mathscr{D}(e_i)$, then
\[
\sum_{f\in\hat G}f(e_{xx})=\sum_{f\in\hat G}e_{\alpha_f(x),\alpha_f(x)}=\lambda_i e_i,
\]
is homogeneous, where $\lambda_i\in\mathbb{Z}_{>0}$.

Now, the above elements are homogeneous in $\bar I(X)$, but they also belong to $I(X)$. Hence, they are homogeneous in $I(X)$ as well. Moreover, the automorphisms induced from $\mathrm{Aut}(X)$ are independent on the base field. Thus, we just proved
\begin{Thm}\label{commutativecase}
	Let $\mathbb{F}$ be any field of characteristic zero, $G$ an abelian group and consider any $G$-grading on $I(X)$. Up to a graded isomorphism, let $e_1,e_2,\ldots,e_t$ be the unity of each $\mathbb{F}H_i$, as in Theorem \ref{class}. Then there exists a subset $\mathscr{A}\subset\mathrm{Aut}(X)$ such that $\lambda_i e_i=\sum_{\alpha\in\mathscr{A}}\alpha(e_{xx})$, for any $x\in\mathscr{D}(e_i)$, for some $\lambda_i\in\mathbb{Z}_{>0}$, and for all $i=1,2,\ldots,t$.\qed
\end{Thm}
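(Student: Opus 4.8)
The plan is to pass to the algebraic closure $\bar{\mathbb{F}}$ and exploit the duality between $G$-gradings and $\hat G$-actions available over an algebraically closed field of characteristic zero. Setting $\bar I(X)=I(X)\otimes_\mathbb{F}\bar{\mathbb{F}}$, every character $f\in\hat G$ acts as a graded automorphism of $\bar I(X)$, scaling the homogeneous component $A_g$ by $f(g)$. The crucial input is the preceding lemma, which (after diagonalizing the minimal idempotents, which accounts for the phrase ``up to a graded isomorphism'') writes each such $f$ as a product of a multiplicative automorphism and a poset automorphism and in particular yields $f(e_{xx})=e_{\alpha_f(x),\alpha_f(x)}$ for a uniquely determined $\alpha_f\in\mathrm{Aut}(X)$. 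The whole statement then becomes an averaging identity for this action restricted to the diagonal idempotents.

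First I would check that $f\mapsto\alpha_f$ is a group homomorphism $\hat G\to\mathrm{Aut}(X)$: applying $f(e_{xx})=e_{\alpha_f(x),\alpha_f(x)}$ twice gives $\alpha_{fg}=\alpha_f\circ\alpha_g$. Since $X$ is finite, $\mathrm{Aut}(X)$ is finite, so the image $\mathscr{A}=\{\alpha_f\mid f\in\hat G\}$ is a finite subgroup, and the action on the diagonal algebra $D=\bigoplus_i e_iI(X)e_i$ factors through the finite group $\mathscr{A}$, sidestepping any difficulty with $\hat G$ being infinite. Next I would observe that each minimal idempotent $e_i$ is homogeneous of degree $1$ (it spans the identity component of $D_i=\mathbb{F}H_i$), hence $f(e_i)=f(1)e_i=e_i$ for all $f$; consequently $f$ preserves $D_i$, which forces $\alpha_f(\mathscr{D}(e_i))=\mathscr{D}(e_i)$. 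Thus $\mathscr{A}$ acts on each block $\mathscr{D}(e_i)$, and the orbit of any $x\in\mathscr{D}(e_i)$ is contained in $\mathscr{D}(e_i)$.

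The heart of the argument, and the step I expect to be the main obstacle, is to show that $\mathscr{A}$ acts \emph{transitively} on each $\mathscr{D}(e_i)$, so that the orbit of $x$ is all of $\mathscr{D}(e_i)$. For this I would use that, over $\bar{\mathbb{F}}$, the $\hat G$-invariants of a graded algebra coincide with its identity component. Applied to $D_i\cong\bar{\mathbb{F}}^{\,|H_i|}$, with $\mathscr{A}$ permuting the coordinate idempotents $e_{xx}$, the fixed subalgebra is $D_i^{\mathscr{A}}=D_i^{\hat G}=(D_i)_1=\bar{\mathbb{F}}e_i$, which is one-dimensional; but the fixed subalgebra of a coordinate-permutation action has dimension equal to the number of orbits, so there is exactly one orbit and the action is transitive. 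Counting preimages then gives $\sum_{\alpha\in\mathscr{A}}e_{\alpha(x),\alpha(x)}=|\mathrm{Stab}_\mathscr{A}(x)|\,e_i$, and transitivity makes $\lambda_i:=|\mathrm{Stab}_\mathscr{A}(x)|=|\mathscr{A}|/|\mathscr{D}(e_i)|$ a positive integer independent of $x\in\mathscr{D}(e_i)$.

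Finally I would descend from $\bar{\mathbb{F}}$ back to $\mathbb{F}$: the automorphisms in $\mathscr{A}$ arise from automorphisms of the poset $X$, hence are defined over $\mathbb{F}$ and act on $I(X)$ itself, while the identity $\lambda_ie_i=\sum_{\alpha\in\mathscr{A}}\alpha(e_{xx})$ is an equality between elements of $I(X)\subseteq\bar I(X)$. Therefore it already holds in $I(X)$, which completes the proof.
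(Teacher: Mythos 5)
Your proposal is correct and follows essentially the same route as the paper: pass to $\bar{\mathbb{F}}$, use the $\hat G$-action and the preceding lemma to get the poset automorphisms $\alpha_f$, and average the orbit of $e_{xx}$ to land in the degree-one component $\bar{\mathbb{F}}e_i$ before descending to $\mathbb{F}$. You are in fact somewhat more careful than the paper at two points the paper leaves implicit: you sum over the finite set $\mathscr{A}$ rather than over the possibly infinite $\hat G$, and you explicitly justify transitivity of $\mathscr{A}$ on $\mathscr{D}(e_i)$ (and hence the value of $\lambda_i$) by identifying the fixed subalgebra $D_i^{\hat G}$ with the one-dimensional identity component.
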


We conjecture that this theorem is true, even when the grading group is not abelian.

Now, using the results of Sections \ref{S_bivectorspace} and \ref{S_isoproblem}, we are able to provide a classification of isomorphism classes of group gradings on the incidence algebras, given that the grading group is abelian.

\begin{Thm}\label{Thm_iso}
Let $G$ be an abelian group, $X$ a finite poset, and assume two $G$-gradings on $I(X)$, namely $A$ and $A'$. Write
$$
A=\left(\begin{array}{cccc}
\mathbb{F}H_1&M_{12}&\ldots&M_{1t}\\
&\ddots&\ddots&\vdots\\
&&\mathbb{F}H_{t-1}&M_{t-1,t}\\
&&&\mathbb{F}H_t
\end{array}\right), A'=
\left(\begin{array}{cccc}
\mathbb{F}H_1'&M_{12}'&\ldots&M_{1t'}'\\
&\ddots&\ddots&\vdots\\
&&\mathbb{F}H_{t'-1}'&M_{t'-1,t'}'\\
&&&\mathbb{F}H_{t'}'
\end{array}\right),
$$
and denote $\mathcal{E}=\{1,2,\ldots,t\}$, $\mathcal{E}'=\{1,2,\ldots,t'\}$. Let $[M_{ij}]=[(\chi^{(ij)}_1,h^{(ij)}_1),\ldots,(\chi^{(ij)}_{s_{ij}},h^{(ij)}_{s_{ij}})]$, $[M_{ij}']=[(\chi^{(ij)\prime}_1,h^{(ij)\prime}_1),\ldots,(\chi^{(ij)\prime}_{s_{ij}'},h^{(ij)\prime}_{s_{ij}'})]$, where $s_{ij},s_{ij}'\ge0$. Then $A\cong A'$, as $G$-graded algebras if and only if $t=t'$ and there exist an isomorphism of posets $\alpha:\mathcal{E}\to\mathcal{E}'$, and characters $\chi_1\in\hat{H}_1,\ldots,\chi_t\in\hat{H}_t$ such that
\begin{enumerate}
\item $H_i=H_{\alpha(i)}$, for each $i=1,2,\ldots,t$,
\item Given $i\trianglelefteq j$, let $\mathcal{H}_{ij}=H_i\cap H_j$. Then we have $s_{ij}=s_{\alpha(i),\alpha(j)}'$, and there is $\sigma_{ij}\in S_{s_{ij}}$ satisfying (for each $\ell=1,2,\ldots,s_{ij}$)
$$
h^{(ij)}_\ell\equiv h^{(\alpha(i),\alpha(j))\prime}_{\sigma(\ell)}\pmod{H_iH_j},\quad \chi^{(ij)}_\ell=\chi_i|_{\mathcal{H}_{ij}}\cdot\chi_j|_{\mathcal{H}_{ij}}\cdot\chi^{(\alpha(i),\alpha(j))\prime}_{\sigma_{ij}(\ell)}.
$$
\end{enumerate}
\end{Thm}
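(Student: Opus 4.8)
The plan is to prove both implications by reducing the global isomorphism problem on $I(X)$ to the pairwise analysis already carried out in Lemma~\ref{grad_alg_iso}, and then gluing the resulting local data over the associated poset $\mathcal{E}$. The backbone of the argument is Corollary~\ref{map_iso}, which allows one to test (and, in the converse direction, to build) a graded isomorphism through its effect on $A/J$ and $J/J^2$ alone, together with Lemma~\ref{lem_j1}, which identifies $J/J^2$ with $J_1=\bigoplus_{i\triangleleft_c j}M_{ij}$ and shows that $A$ is generated as an algebra by $D=\bigoplus_i\mathbb{F}H_i$ and $J_1$.

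For the forward implication, suppose $\psi\colon A\to A'$ is a graded isomorphism. Proposition~\ref{prop_iso} immediately yields $t=t'$, a poset isomorphism $\alpha\colon\mathcal{E}\to\mathcal{E}'$, and $H_i=H_{\alpha(i)}$, which is exactly condition (1). The induced graded isomorphism $\bar\psi\colon A/J\to A'/J'$ restricts on each simple block to a graded isomorphism $\mathbb{F}H_i\to\mathbb{F}H_{\alpha(i)}=\mathbb{F}H_i$; since every homogeneous component of $\mathbb{F}H_i$ is one-dimensional, this map has the form $h\mapsto\chi_i(h)\,h$ for a uniquely determined $\chi_i\in\hat H_i$, and these are the global characters asserted in the statement. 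To obtain condition (2), I would fix a comparable pair $i\trianglelefteq j$ and pass to the triangular subalgebra $(e_i+e_j)A(e_i+e_j)$, whose corners are $\mathbb{F}H_i$, $\mathbb{F}H_j$ and whose off-diagonal part is $M_{ij}$. After normalizing $\psi$ so that it carries the minimal idempotents $e_i$ to $e_{\alpha(i)}$ (see the obstacle below), $\psi$ restricts to a graded isomorphism of these $2\times2$ triangular algebras, and Lemma~\ref{grad_alg_iso} then delivers $s_{ij}=s'_{\alpha(i)\alpha(j)}$, a permutation $\sigma_{ij}$, and the congruence $h^{(ij)}_\ell\equiv h'^{(\alpha(i)\alpha(j))}_{\sigma_{ij}(\ell)}\pmod{H_iH_j}$. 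The crucial point is that the twisting character produced by Lemma~\ref{grad_alg_iso} is exactly $\chi_i|_{\mathcal{H}_{ij}}\cdot\chi_j|_{\mathcal{H}_{ij}}$, because the diagonal characters in that lemma are read off from $\bar\psi$ and are therefore the restrictions of the same global $\chi_i,\chi_j$; this is what forces the local character data to cohere and yields the stated formula for $\chi^{(ij)}_\ell$.

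For the converse, I would construct $\psi$ directly from the data $(\alpha,\chi_1,\ldots,\chi_t,\sigma_{ij})$. On the diagonal I set $\psi(h)=\chi_i(h)\,h\in\mathbb{F}H_{\alpha(i)}$ for $h\in H_i$, which is graded precisely because $H_i=H_{\alpha(i)}$; on each cover bimodule $M_{ij}$ (with $i\triangleleft_c j$) I use Corollary~\ref{main_cor} to send the generator $m^{(ij)}_\ell$ to a suitable homogeneous scalar multiple of $m'^{(\alpha(i)\alpha(j))}_{\sigma_{ij}(\ell)}$, where the degree shift is permitted by $h^{(ij)}_\ell\equiv h'_{\sigma_{ij}(\ell)}\pmod{H_iH_j}$ and the character identity guarantees the required $\hat{\mathcal{H}}_{ij}$-equivariance (as in Lemma~\ref{bimodule_iso}). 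Since $A$ is generated by $D$ and $J_1$, this prescription extends to a graded homomorphism once one verifies it respects the bimodule relations, and by Corollary~\ref{map_iso}(iii) it is automatically a graded isomorphism, the induced map on $A/J^2$ being bijective by construction.

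The step I expect to be the main obstacle is the normalization in the forward direction, namely arranging $\psi(e_i)=e_{\alpha(i)}$ so that $\psi$ genuinely restricts to the $2\times2$ triangular subalgebras. A priori $\psi$ only satisfies $\psi(e_i)\equiv e_{\alpha(i)}\pmod{J'}$, so one must conjugate by a suitable graded unit to move the homogeneous idempotent $\psi(e_i)$ onto the diagonal idempotent $e_{\alpha(i)}$ without disturbing the grading or the diagonal characters $\chi_i$ already extracted. A related bookkeeping difficulty, on the converse side, is checking that the locally defined matchings on the cover bimodules are mutually compatible along composable chains in $\mathcal{E}$, so that the extension to the higher powers $J_1^m$ with $m>1$ is consistent; this should follow from the fact that the entire multiplication of $I(X)$ is already determined by $D$ and $J_1$.
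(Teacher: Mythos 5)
Your proposal follows essentially the same route as the paper, whose own proof is only a two-sentence sketch citing Proposition~\ref{prop_iso} and the proof of Lemma~\ref{grad_alg_iso} for the forward direction, and the poset isomorphism together with Corollary~\ref{map_iso} and Lemma~\ref{grad_alg_iso} for the converse. Your version supplies strictly more detail than the paper does, and the two obstacles you flag (conjugating homogeneous idempotents onto the diagonal by a degree-one unit congruent to $1$ modulo the radical, and compatibility of the cover-level data along chains) are exactly the points the paper leaves implicit.
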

\begin{proof}
The 'only if' part follows from Proposition \ref{prop_iso} and the proof of Lemma \ref{grad_alg_iso}. To prove the 'if' part, we notice that the isomorphism of posets $\alpha$ induces an algebra isomorphism. Then, we apply Corollary \ref{map_iso} and the proof of Lemma \ref{grad_alg_iso}.
\end{proof}

\section{Examples\label{S_Examples}}
\subsection{It is not always possible to reduce to cyclic groups.} Consider a commutative graded division algebra $D$. It is natural to ask if we can reduce such graded algebra to a direct sum of graded subalgebras, where each of them are graded by cyclic groups. The answer is no, as shown by the following example. Let $X=\{1,2,3,4\}$ where $i\le j$ if and only if $i=j$. We can define a $G=\mathbb{Z}_2\times\mathbb{Z}_2$ grading on $I(X)$ posing:
\begin{eqnarray*}
\deg e_{11}+e_{22}+e_{33}+e_{44}&=&(0,0),\\
\deg e_{11}-e_{22}-e_{33}+e_{44}&=&(1,0),\\
\deg e_{11}+e_{22}-e_{33}-e_{44}&=&(0,1),\\
\deg e_{11}-e_{22}+e_{33}-e_{44}&=&(1,1).
\end{eqnarray*}
In particular, there is no homogeneous idempotent $e'\in I(X)$ with $s(e')\subset s(e)$ other than $e$ itself. Hence we can not break $I(X)$ as a sum of two graded division algebras.\\

%\example{Special case for the bimodule.} Assuming Theorem \ref{class}, fix some $D_i$ and $D_j$ and $M=D_iI(X)D_j$, where $\mathrm{Supp}\,D_i\cap\mathrm{Supp}\,D_j=1$. Thus $M=0$ or $M$ is generated by an unique element.
%
%Indeed, there is several arguments to prove this. First, note that $\dim_\mathbb{F} M\le |\mathrm{Supp}\,D_i|\cdot|\mathrm{Supp}\,D_j|=(\dim_\mathbb{F} D_i)\cdot(\dim_\mathbb{F} D_j)$. Let $m\in M$ be any non-zero homogeneous element. Then for all homogeneous $u\in D_i$ and $u'\in D_j$, $umu'$ is a non-zero homogeneous; moreover $\deg umu' = \deg vmv'$ if and only if $\deg u=\deg v$ and $\deg u'=\deg v'$ (this holds since $\mathrm{Supp}\,D_i\cap\mathrm{Supp}\,D_j=1$). But this implies that there is at least $|\mathrm{Supp}\,D_i|\cdot|\mathrm{Supp}\,D_j|$ homogeneous different elements in $M$. As a conclusion, necessarily $M=D_imD_j$.\\
%
\subsection{Non-existence of homogeneous multiplicative basis.} Let $G=\mathbb{Z}_2\times\mathbb{Z}_2\times\mathbb{Z}$ and $X$ be the poset whose Hasse diagram is the following:

\begin{center}
\begin{tikzpicture}
\node[circle,draw] (a) at (0,0) {$1$};
\node[circle,draw] (b) at (1,0) {$2$};
\node[circle,draw] (c) at (0,1) {$3$};
\node[circle,draw] (d) at (1,1) {$4$};
\node[circle,draw] (g) at (0,2) {$7$};
\node[circle,draw] (h) at (1,2) {$8$};
\node[circle,draw] (e) at (2,1) {$5$};
\node[circle,draw] (f) at (3,1) {$6$};
\draw (a) -- (c) -- (b) -- (d) -- (a);
\draw (c) -- (g) -- (d) -- (h) -- (c);
\draw (a) -- (e) -- (g);
\draw (b) -- (f) -- (h);
\end{tikzpicture}
\end{center}

Define the following diagonal elements to be homogeneous:
\begin{eqnarray*}
\deg e_{11}+e_{22}&=&(0,0,0),\\
\deg e_{11}-e_{22}&=&(1,0,0),\\
\deg e_{33}+e_{44}&=&(0,0,0),\\
\deg e_{33}-e_{44}&=&(0,1,0),\\
\deg e_{55}+e_{66}&=&(0,0,0),\\
\deg e_{55}-e_{66}&=&(1,0,0),\\
\deg e_{77}+e_{88}&=&(0,0,0),\\
\deg e_{77}-e_{88}&=&(1,0,0),
\end{eqnarray*}
and, the following elements of the Jacobson radical to be homogeneous:
\begin{eqnarray*}
\deg e_{13}+e_{14}+e_{23}+e_{24}&=&(0,0,1),\\
\deg e_{13}+e_{14}-e_{23}-e_{24}&=&(1,0,1),\\
\deg e_{13}-e_{14}+e_{23}-e_{24}&=&(0,1,1),\\
\deg e_{13}-e_{14}-e_{23}+e_{24}&=&(1,1,1),
\end{eqnarray*}
\begin{eqnarray*}
\deg e_{37}+e_{38}+e_{47}+e_{48}&=&(0,0,1),\\
\deg e_{37}-e_{38}+e_{47}-e_{48}&=&(1,0,1),\\
\deg e_{37}+e_{38}-e_{47}-e_{48}&=&(0,1,1),\\
\deg e_{37}-e_{38}-e_{47}+e_{48}&=&(1,1,1),
\end{eqnarray*}
\begin{eqnarray*}
\deg e_{15}+e_{26}&=&(0,0,1),\\
\deg e_{15}-e_{26}&=&(1,0,1),\\
\deg e_{57}+e_{68}&=&(0,0,1),\\
\deg e_{57}-e_{68}&=&(1,0,1),
\end{eqnarray*}
\begin{eqnarray*}
\deg e_{17}+e_{28}&=&(0,0,2),\\
\deg e_{17}-e_{28}&=&(1,0,2),\\
\deg e_{27}+e_{18}&=&(0,0,2),\\
\deg e_{27}-e_{18}&=&(1,0,2).
\end{eqnarray*}
Let $G_1=\mathbb{Z}_2\times0\times0$ and $G_2=0\times\mathbb{Z}_2\times0$ be subgroups of $G$. The induced poset, as in Lemma \ref{ordering}, is

\begin{center}
\begin{tikzpicture}
\node (a) at (1,0) {$\mathbb{F}G_1$};
\node (b) at (0,1) {$\mathbb{F}G_2$};
\node (c) at (2,1) {$\mathbb{F}G_1$};
\node (d) at (1,2) {$\mathbb{F}G_1$};
\draw (a) -- (b) -- (d) -- (c) -- (a);
\end{tikzpicture}\\
\end{center}
where, by abuse of notation, every $\mathbb{F}G_i$ represents its homogeneous idempotent generator. Moreover, with the notation of Theorem \ref{class}, the above grading is isomorphic to
$$
I(X)\cong\left(\begin{array}{cccc}
\mathbb{F}G_1&\mathbb{F}(G_1\times G_2)&\mathbb{F}G_1&\mathbb{F}G_1\oplus \mathbb{F}G_1\\
&\mathbb{F}G_2&0&\mathbb{F}(G_2\times G_1)\\
&&\mathbb{F}G_1&\mathbb{F}G_1\\
&&&\mathbb{F}G_1
\end{array}\right).
$$
We claim that $I(X)$ has no multiplicative homogeneous basis. Let us use the notation of Theorem \ref{class}, that is, let $M_{12}=\mathbb{F}(G_1\times G_2)$, $M_{13}=\mathbb{F}G_1$, $M_{24}=\mathbb{F}(G_2\times G_1)$, $M_{34}=\mathbb{F}G_1$. The main idea is that each nonzero element obtained by the product of homogeneous elements $M_{12}M_{24}$ must have 4 nonzero entries; while the nonzero elements of the product $M_{13}M_{34}$ have only 2 nonzero entries. Thus, we cannot construct a homogeneous basis of this algebra.

The proof is tedious, and we list the main steps. Assume that $\mathcal{B}$ is a homogeneous basis of $I(X)$.
\begin{enumerate}
\renewcommand{\labelenumi}{(\arabic{enumi})}
\item $\mathcal{B}$ consists of a homogeneous basis of the diagonal $\mathbb{F}G_1\oplus\mathbb{F}G_2\oplus\mathbb{F}G_1\oplus\mathbb{F}G_1$, the space $M_{12}\oplus M_{13}\oplus M_{24}\oplus M_{34}$, and $M_{14}$.
\item $\mathcal{B}$ contains a homogeneous basis of $\mathbb{F}G_2$.
\item So, $\mathcal{B}$ contains a homogeneous basis of $M_{12}$ and $M_{24}$ as well. Hence, the elements of $\mathcal{B}$ that belongs to $M_{14}$ must have four non-zero entries.
\item For $i<j$, denote by $\pi_{ij}:I(X)\to M_{ij}$ the projection. Given $v\in\mathcal{B}$, then $\pi_{13}(v)\ne0$ implies $\pi_{12}(v)=0$. Similarly, $\pi_{34}(v)\ne0$ implies $\pi_{24}(v)=0$.
\item So, we can find $v_1,v_2\in\mathcal{B}$ such that $\pi_{13}(v_1)\ne0$, $\pi_{34}(v_2)\ne0$ and $v_1v_2\ne0$. However, $v_1v_2\in M_{14}$ contains only two non-zero entries, and it should be a multiple of an element of $\mathcal{B}$. This is a contradiction.
\end{enumerate}

\subsection{Non-abelian grading.} Fix a $G$-grading on $I(X)$. If we can find a commutative group $H$, and a $H$-grading on $I(X)$, equivalent to the original $G$-grading (in the sense of \cite[Definition 1.14, p. 14]{EldKoc}), then we can utilize Theorem \ref{commutativecase}, and obtain the same statement for non-abelian grading group. But the following example shows that we can not always do it. Let $G=S_3$ be the group of permutations of 3 elements. Let $X=\{1,2,3,4,5,6\}$, where $i\le i+3$, for all $i$. In Hasse diagram:
\begin{center}
\begin{tikzpicture}
\node[circle,draw] (a) at (0,0) {$1$};
\node[circle,draw] (b) at (1,0) {$2$};
\node[circle,draw] (c) at (2,0) {$3$};
\node[circle,draw] (d) at (0,1) {$4$};
\node[circle,draw] (e) at (1,1) {$5$};
\node[circle,draw] (f) at (2,1) {$6$};
\draw (a) -- (d);
\draw (b) -- (e);
\draw (c) -- (f);
\end{tikzpicture}\\
\end{center}

Let $\omega\ne1$ be such that $\omega^3=1$. Define the following elements to be homogeneous:
\begin{eqnarray*}
\deg e_{11}+e_{22}+e_{33}&=&(1),\\
\deg e_{11}+\omega e_{22}+\omega^2e_{33}&=&(123),\\
\deg e_{11}+\omega^2e_{22}+\omega e_{33}&=&(132),\\
\deg e_{14}+e_{25}+e_{36}&=&(12),\\
\deg e_{14}+\omega e_{25}+\omega^2e_{36}&=&(13),\\
\deg e_{14}+\omega^2e_{25}+\omega e_{36}&=&(23),\\
\deg e_{44}+e_{55}+e_{66}&=&(1),\\
\deg e_{44}+\omega e_{55}+\omega^2e_{66}&=&(132),\\
\deg e_{44}+\omega^2e_{55}+\omega e_{66}&=&(123).
\end{eqnarray*}
This indeed well defines a $S_3$-grading on $I(X)$. The key point is the product
$$
(e_{11}+\omega e_{22}+\omega^2e_{33})(e_{14}+e_{25}+e_{36})(e_{44}+\omega^2e_{55}+\omega e_{66})=e_{14}+e_{25}+e_{36}
$$
Thus, if $g=\deg e_{11}+\omega e_{22}+\omega^2e_{33}$, and $h=\deg e_{14}+e_{25}+e_{36}$, we obtain the equation $ghg=h$. So, the grading cannot be equivalent to an abelian grading, unless $g^2=1$, which is never the case. Hence, the extension of Theorem \ref{commutativecase} to the non-abelian case, if possible, is not so direct.

\subsection{Reconstruction of algebras}
A natural question is which algebras can be realized as an incidence algebra endowed with a grading.

In general, this is a hard question and we do not have a complete answer. It seems that we cannot obtain a general answer for all kinds of finite posets.

Every group algebra can be realized as an incidence algebra with a grading, by Lemma \ref{field}. If the associated poset $\mathcal{E}$ has 2 elements, then we can also realize:
\begin{Prop}
Let $G$ be an abelian group, and assume that $\mathbb{F}$ contains enough roots of unit. Let $H_1,H_2$ be finite subgroups of $G$, and $M$ a graded $(\mathbb{F}H_1,\mathbb{F}H_2)$-bimodule, satisfying $[M]=[(\chi_1,h_1),\ldots,(\chi_m,h_m)]$ with $\chi_i\ne\chi_j$ for $i\ne j$. Then there exists a finite poset $X$ and a $G$-grading on $I(X)$ such that
$$
I(X)\cong\left(\begin{array}{cc}\mathbb{F}H_1&M\\&\mathbb{F}H_2\end{array}\right).
$$
\end{Prop}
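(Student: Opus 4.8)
The plan is to build the poset $X$ on the set of characters $\hat H_1\sqcup\hat H_2$, to read the comparabilities off the support of $M$, and then to obtain the grading by transporting the one on $A$ along an ungraded algebra isomorphism. Since $\mathbb F$ contains enough roots of unity, Lemma \ref{field} supplies algebra isomorphisms $\psi_1\colon\mathbb F H_1\to\mathbb F^{\hat H_1}$ and $\psi_2\colon\mathbb F H_2\to\mathbb F^{\hat H_2}$ obtained by evaluating at characters; I identify the primitive idempotents $\epsilon_\lambda$ (for $\lambda\in\hat H_1$) and $\epsilon_\mu$ (for $\mu\in\hat H_2$) of the two images with the points of $X$.

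Writing $\mathcal H=H_1\cap H_2$ and $H=H_1H_2$, Corollary \ref{main_cor} gives $M=\bigoplus_{\ell=1}^m\mathbb F H_1m_\ell\mathbb F H_2$ with each summand isomorphic to $\mathbb F H\cong\mathbb F^{\hat H}$ and carrying the twisted action $hm_\ell=\chi_\ell(h)m_\ell h$. Regarding $M$ as an $(\mathbb F^{\hat H_1},\mathbb F^{\hat H_2})$-bimodule through $\psi_1,\psi_2$, I would compute each cell $\epsilon_\lambda M\epsilon_\mu$. A character of $H$ restricting to $\lambda$ on $H_1$ and, after the twist by $\chi_\ell$, to $\mu$ on $H_2$ exists and is unique precisely when $\lambda$ and $\mu$ agree on $\mathcal H$ up to $\chi_\ell$, i.e. when $(\lambda\mu^{-1})|_{\mathcal H}=\chi_\ell$ (the exact form of this identity, read off the argument proving Theorem \ref{Thm3}, must be checked). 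Consequently the $\ell$-th summand contributes at most one dimension to $\epsilon_\lambda M\epsilon_\mu$, and --- this is where the hypothesis $\chi_i\neq\chi_j$ enters --- distinctness of the $\chi_\ell$ forces at most one summand to contribute, so $\dim\epsilon_\lambda M\epsilon_\mu\le1$ for all $\lambda,\mu$. I then define the order on $X$ by making $\hat H_1$ and $\hat H_2$ antichains and setting $\lambda\le\mu$ exactly when $(\lambda\mu^{-1})|_{\mathcal H}\in\{\chi_1,\ldots,\chi_m\}$; since every comparability runs from $\hat H_1$ to $\hat H_2$, this is automatically a partial order, and a direct count of comparable pairs (consistent with Lemma \ref{link}) shows the off-diagonal part of $I(X)$ has dimension $m|H|=\dim M$.

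With $X$ fixed I would assemble $\Phi\colon A\to I(X)$: on the diagonal blocks it is $\psi_1$ and $\psi_2$, and on $M$ it sends a chosen basis vector of each one-dimensional cell $\epsilon_\lambda M\epsilon_\mu$ to the matrix unit $e_{\lambda\mu}$. Because the one-sided actions are diagonal with respect to the cell decomposition, $\Phi$ restricts to an isomorphism of $(\mathbb F^{\hat H_1},\mathbb F^{\hat H_2})$-bimodules on the off-diagonal, and since $M$ and the off-diagonal part of $I(X)$ both square to zero, $\Phi$ is an isomorphism of ungraded algebras. Finally I put a $G$-grading on $I(X)$ by transport, $I(X)_g:=\Phi(A_g)$; as $\Phi$ is an algebra isomorphism and $A$ is $G$-graded, this is a bona fide $G$-grading, and $\Phi$ is then a graded isomorphism $A\cong I(X)$, which is exactly the claim.

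I expect the principal difficulty to lie in the cell computation: one must track the twist $\chi_\ell$ carefully when passing from $\mathbb F H$ to $\mathbb F H_1m_\ell\mathbb F H_2$ and pin down the precise compatibility condition on the pair $(\lambda,\mu)$. It is exactly here that the distinctness of the characters is indispensable --- without it some cell would have dimension larger than one and the reconstructed algebra would fail to be an incidence algebra, as the non-good grading of Example \ref{ex1} already illustrates. The remaining points (the poset axioms, the dimension match, and checking that $\Phi$ is multiplicative) are routine.
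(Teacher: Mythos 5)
Your proof is correct and follows essentially the same route as the paper's: a two-level poset with $|H_1|+|H_2|$ points, all relations running from the first level to the second, exactly $m|H_1H_2|$ comparable pairs, and the grading obtained by transport along an ungraded isomorphism. Your version is in fact more complete than the paper's (which explicitly leaves the details of the construction to the reader): labelling the points by the characters and showing that the distinctness of the $\chi_\ell$ forces $\dim\epsilon_\lambda M\epsilon_\mu\le1$ is precisely the missing detail that makes the off-diagonal block realizable as the span of matrix units of an incidence algebra, and it is the only place where the hypothesis $\chi_i\ne\chi_j$ is used.
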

\begin{proof}
Let $X=X_1\cup X_2$ be a poset, where $X_1$ and $X_2$ are disjoint sets, and $X_i$ contains $|H_i|$ elements. We construct an ordering $\le_X$ on $X$ where $i\le_X j$ implies $i\in X_1$, and $j\in X_2$. Thus
$$
I(X)\cong\left(\begin{array}{cc}\mathbb{F}^{|H_1|}&V\\0&\mathbb{F}^{|H_2|}\end{array}\right).
$$

Since, as ordinary algebras, $\mathbb{F}H_i\cong\mathbb{F}^{|H_i|}$, we can endow in $\mathbb{F}^{|H_i|}$ a $H_i$-grading, in such a way that $\mathbb{F}^{|H_i|}\cong\mathbb{F}H_i$ as graded algebras. Recall that, if $e_i\in\mathbb{F}H_i$ is the homogeneous idempotent, then $\ell(e_1,e_2)$ is the linking number of $e_1$ and $e_2$. We shall construct $\le_X$ to have $\ell(e_1,e_2)=\dim_\mathbb{F}M$.

Write $M\cong\mathbb{F}Hm_1\oplus\ldots\oplus\mathbb{F}Hm_s$, where $0\le s\le\frac{|H_1||H_2|}{|H|}$. Now, we construct exactly $s|H|$ relations $i\le_X j$, in such a way that $\ell(i,e_2)$, $i\in X_1$, and $\ell(e_1,j)$, $j\in X_2$, are constants. As a graded bimodule, $V$ is generated by exactly $s$ elements, say $r_1,\ldots,r_s$. Defining $\deg r_k=\deg m_k$, for $k=1,2,\ldots,s$, we obtain the desired realization. We left to the reader to fil all the details of this construction.
\end{proof}

If $|\mathcal{E}|>2$, then we should deal with compatibility conditions, as the following example suggests.
\begin{Example}
Let $G$ be an abelian group containg $H\cong\mathbb{Z}_2$. Assume that $I(X)$ is a $G$-graded incidence algebra such that
$$
I(X)\cong\left(\begin{array}{ccc}
\mathbb{F}H&M_{12}&M_{13}\\
&\mathbb{F}H&M_{23}\\
&&\mathbb{F}H
\end{array}\right),
$$
where $\dim_{\mathbb{F}H}M_{ij}=2$, for all $1\le i<j\le 3$. Write $M_{ij}=\mathbb{F}Hm_{ij}^{(1)}\oplus\mathbb{F}H m_{ij}^{(2)}$, where each $m_{ij}^{(k)}$ is homogeneous of degree $h_{ij}^{(k)}$. Then $I(X)$ is consistent if, and only if, the following conditions hold:
\begin{align*}
h_{12}^{(1)}h_{23}^{(1)}=h_{13}^{(1)}=h_{12}^{(2)}h_{23}^{(2)},\\
h_{12}^{(2)}h_{23}^{(1)}=h_{13}^{(2)}=h_{12}^{(1)}h_{23}^{(2)}.
\end{align*}
\end{Example}

\begin{Question}
Determine which graded algebras can be realized as an incidence algebra endowed with a $G$-grading.
\end{Question}

\section*{Acknowledgment}
This work started when the third author visited the State University of Maring\'a. The third author would like to thank all the hospitality and cordiality received from Professor Ednei and Jonathan, and the resources of the State University of Maring\'a. All the authors are grateful to Dr.~M.~Kochetov for providing a proof of Proposition \ref{kochetov}.


\begin{thebibliography}{22}
\bibitem{BaSeZa2001} Y. A. Bahturin, S. K. Sehgal, M. V. Zaicev, \textit{Group gradings on associative algebras}, Journal of Algebra \textbf{241 (2)} (2001), 677--698.
\bibitem{bashza} Y. A. Bahturin, I. P. Shestakov,  M. V. Zaicev, \textit{Gradings on simple Jordan and Lie algebras}, Journal of Algebra \textbf{283(2)} (2005),  849--868.
\bibitem{BaZa2002} Y. A. Bahturin, M. V. Zaicev, \textit{Group gradings on matrix algebras}, Canadian Mathematical Bulletin \textbf{45 (4)} (2002), 499--508.
\bibitem{BFS2019} L. Bemm, E. Fornaroli, E. Santulo Jr, \emph{A cohomological point of view on gradings on algebras with multiplicative basis}, Journal of Pure and Applied Algebra \textbf{223} (2019), 769--782.
\bibitem{VinKoVa2004} O. M. Di Vincenzo, P. Koshlukov, A. Valenti, \textit{Gradings on the algebra of upper triangular matrices and their graded identities}, Journal of Algebra \textbf{275(2)} (2004), 550--566.
\bibitem{EldKoc} A. Elduque, M. Kochetov, \textit{Gradings on simple Lie algebras}, Mathematical Surveys and Monographs, 189. American Mathematical Society (2013).
\bibitem{Gord} A. Gordienko, \textit{Co-stability of radicals and its applications to PI-theory}, Algebra Colloquium \textbf{23.3} (2016), 481--492.
\bibitem{Jac1979} N. Jacobson, \emph{Lie algebras}. Republication of the 1962 original. Dover Publications, Inc., New York, 1979.
\bibitem{J1995} S. Jondrup, \textit{Automorphisms and derivations of upper triangular matrix rings}, Linear algebra and its applications, \textbf{221} (1995), 205--218.
\bibitem{Jon2006} M. Jones, \emph{Elementary and good group gradings of incidence algebras over partially-ordered sets with cross-cuts}, Communications in Algebra \textbf{34} (2006), 2369--2387.
\bibitem{karpilovsky} G. Karpilovsky, \emph{Projective representations of finite groups}. Monographs and Textbooks in Pure and Applied Mathematics, 94. Marcel Dekker, Inc., New York, 1985.
\bibitem{Kemer} A. R. Kemer, \textit{Ideals of identities of associative algebras}, Translations of Mathematical Monographs, 87. American Mathematical Society (1991).
\bibitem{MSp2010} L Miller, E. Spiegel, \emph{Group gradings in incidence algebras}, Communications in Algebra \textbf{38} (2010), 953--963.
\bibitem{Pr2013} K. Price, \emph{Good gradings of generalized incidence rings}, Communications in Algebra \textbf{41} (2013), 3668--3678.
\bibitem{SanMartin} L. A. B. San Martin, \textit{\'Algebras de Lie}, Unicamp, 2010.
\bibitem{SpieDon} E. Spiegel, C. O'Donnell, \textit{Incidence algebras}, Monographs and Textbooks in Pure and Applied Mathematics, 206. Marcel Dekker, Inc., New York, 1997.
\bibitem{VaZa2007} A. Valenti, M. V. Zaicev, \textit{Group gradings on upper triangular matrices}, Archiv der Mathematik \textbf{89(1)} (2007), 33--40.
\end{thebibliography}
\end{document}